\newenvironment{proofof}[1][\itseries]{\vspace*{10pt}\par\noindent{\bfseries\upshape Proof of #1\ }}{\jmlrQED}
\newcommand{\reals}{\mathbb{R}}
\newcommand{\extreals}{\overline{\mathbb{R}}}
\newcommand{\nnegextreals}{\overline{\mathbb{R}}_{\geq 0}}
\newcommand{\nnegreals}{\reals_{\geq 0}}
\newcommand{\nats}{\mathbb{N}}
\newcommand{\natz}{\mathbb{N}_{0}}
\newcommand{\indica}[1]{\mathbb{I}_{#1}}
\newcommand{\prob}{\mathrm{P}}
\newcommand{\prev}{\mathrm{E}}
\newcommand{\mprev}[1]{\prev_{#1}}
\newcommand{\dif}{\mathrm{d}}
\newcommand{\upprev}[1][]{\overline{\mathrm{E}}_{#1}}
\newcommand{\upprevvovk}[1]{\overline{\mathrm{E}}_{\mathrm{G},#1}}
\newcommand{\lowprev}{\underline{\mathrm{E}}}
\newcommand{\mupprev}[1]{\overline{\mathrm{E}}_{#1}}
\newcommand{\axupprev}[1]{\overline{\mathrm{E}}_{\mathrm{A}\,,#1}}
\newcommand{\mlowprev}[1]{\underline{\mathrm{E}}_{\,#1}}
\newcommand{\lupprev}[1]{\smash{\overline{\mathrm{Q}}_{#1}}}
\newcommand{\extlupprev}[1]{\overline{\mathrm{Q}}{}_{#1}^{\hspace{0.6pt}\raisebox{1pt}{\scalebox{0.6}{\ensuremath \uparrow}}}}
\newcommand{\sit}{x_{1:n}}
\newcommand{\martingale}{\mathscr{M}}
\newcommand{\setofsupmartb}{{\mathbb{M}}_\mathrm{b}}
\newcommand{\setofprob}[1][]{\mathbb{P}_{\,#1}}
\newcommand{\situations}{\mathscr{X}^\ast}
\newcommand{\statespace}{\mathscr{X}}
\newcommand{\samplespace}{\Omega}
\newcommand{\probtree}{\mathscr{P}}
\newcommand{\gengambles}{\mathscr{L}}
\newcommand{\genvariables}{\overline{\mathscr{L}}}
\newcommand{\fingambles}{\mathbb{F}}
\newcommand{\gambles}{\mathbb{V}}
\newcommand{\extvariables}{\overline{\mathbb{V}}}
\newcommand{\bextvariables}{\overline{\mathbb{V}}_\mathrm{b}}
\newcommand{\nnegextvariables}{\overline{\mathbb{V}}_{\geq 0}}
\newcommand{\nnegvariables}{\smash{\mathbb{V}{}_{\raisebox{5pt}{\scalebox{0.7}{\ensuremath{\geq 0}}}}^{\scalebox{0.7}{u}}}}
\newcommand{\posspace}{\mathscr{Y}}
\newcommand{\abs}[1]{\left\lvert#1\right\rvert}
\title[Global Upper Expectations for Discrete-Time Stochastic Processes: In Practice, They Are All The Same!]{Global Upper Expectations for Discrete-Time Stochastic Processes: \\In Practice, They Are All The Same!
}
\author{
  \Name{Natan T'Joens}\Email{natan.tjoens@ugent.be}\\
  \Name{Jasper De Bock}\Email{jasper.debock@ugent.be}\\ 
  \addr Foundations Lab for Imprecise Probabilities, ELIS, Ghent University, Belgium
}
\begin{document}
\maketitle

\begin{abstract}
We consider three different types of global uncertainty models for discrete-time stochastic processes: measure-theoretic upper expectations, game-theoretic upper expectations and axiomatic upper expectations.
The last two are known to be identical.
We show that they coincide with measure-theoretic upper expectations on two distinct domains:
monotone pointwise limits of finitary gambles, and bounded below Borel-measurable variables. We argue that these domains cover most practical inferences, and that therefore, in practice, it does not matter which model is used.
\end{abstract}
\begin{keywords}
upper expectation, imprecise probabilities, monotone convergence, probability measure, supermartingale, capacitability
\end{keywords}

\section{Introduction}

To describe the dynamics of a discrete-time stochastic process, one may choose between a number of different mathematical approaches.
There is of course the measure-theoretic option \cite{billingsley1995probability,shiryaev2016probabilityPartI,shiryaev2019probabilityPartII}---undoubtedly the most popular one---but one can also use martingales or game-theoretic principles to do so \cite{Shafer:2005wx,Vovk2019finance,williams1991probability}. 
Each of these approaches has its own unique strengths and flaws, and each of them---rightly or not---has attracted a dedicated group of followers.
Our aim here is not to argue for the use of one or the other though, but rather to study the mathematical relation between the (global) uncertainty models that arise from these approaches in a general, imprecise-probabilistic context.
As we will see, they turn out to be surprisingly similar.

All the global---imprecise---uncertainty models that we will consider take the form of an upper (or lower) expectation \cite{troffaes2014,Walley:1991vk}; 
a non-linear operator that can---but need not---be interpreted as a tight upper bound on a set of expectations.
They are called global because they model beliefs about the entire, uncertain path taken by the process.
In that sense, they differ from---and are more general than---local uncertainty models, which only give information about how the process is likely to evolve from one time instant to the next.
Such local models form the parameters of a stochastic process, whereas the global uncertainty model that follows from it---in our case, a global upper expectation---extends the information incorporated in these local models.  
It is the particular way in which this extension is done that distinguishes one type of global model from the other.

We consider three global models.
The first is a probabilistic model that is defined as an upper envelope over a set of measure-theoretic global expectations \cite{8535240,TJOENS202130}.
The second is based on game-theoretic principles, and defined as an infimum over hedging prices; see Refs.~\cite{Shafer:2005wx,Vovk2019finance}.
The last is an abstract axiomatic model, whose defining axioms we have motivated in an earlier paper \cite{TJOENS202130} on the basis of both a probabilistic and a behavioural interpretation.
We have already shown that the second and third of these three global upper expectations are identical \cite{TJOENS202130}.
In this paper, we relate the first---measure-theoretic---one to this common axiomatic/game-theoretic upper expectation.

Our contribution consists in showing that they are equal on two different domains:
variables that are monotone (upward or downward) limits of finitary gambles---bounded variables that only depend on the process' state at a finite number of time instances---and bounded below Borel-measurable variables.
Upper expectations on these two types of domains cover the vast majority of inferences encountered in practice;
upper and lower\footnote{Lower expectations can be derived from upper expectations using conjugacy; see Section~\ref{Sect: Preliminaries} and Corollary~\ref{corollary: equivalence}.} expected hitting times, for instance, fall under the first category \cite{8627473}; upper and lower expected time averages under the second~\cite{TJOENS2021181}.
Hence the title of this paper.
That the three considered global upper expectations are equal on such a large domain is relevant in a number of ways.
First of all, it leaves no room for discussion when it comes to choosing a global model; it simply does not matter since all of them are equal.
Philosophically speaking, it is interesting that, whatever the interpretational point of view and associated system of logical reasoning is, we always end up with exactly the same object.
Finally, and maybe most importantly, such a relation provides us with a large number of additional mathematical properties for the models at hand;
properties that were previously only known to hold for one or two of these models, suddenly hold for all three of them.
We refer to Refs.~\cite{DeBock2021Ergodic,8627473,8535240} for an illustration of how properties acquired in this way have already led to important consequences.


\iftoggle{arxiv}
{This paper is an extended version of a contribution that is submitted for possible publication in the Proceedings of ISIPTA 2021.
Compared to the submitted version, this extended version additionally includes an appendix containing proofs for the results in the main text.}
{To adhere to the page limit, we have relegated most of our proofs to the appendix of an extended online version of this paper \cite{TJoens2021Equivalence}.}

\section{Local Uncertainty Models}

A \emph{discrete-time stochastic process} is an infinite sequence \(X_1, X_2, ..., X_k , ...\) of uncertain states, where the state \(X_k\) at each discrete time point \(k \in \nats\) takes values in a fixed non-empty set \(\statespace{}\), called the \emph{state space}.
We will assume that this state space $\statespace{}$ is \emph{finite}.
Typically, when modelling the dynamics of a stochastic process, one starts off on a local level, by specifying how the process' state $X_k$ is (likely) to evolve from one time instant to the next.
In particular, we do this by attaching a so-called local uncertainty model to each possible \emph{situation};
a finite---possibly empty---sequence $x_{1:k} \coloneqq x_{1} x_{2}\cdots x_{k}$ of state values that represents a possible history $X_1 = x_1 , \cdots, X_k = x_k$ up until some time point $k\in\natz{}$, with $\natz\coloneqq\nats{}\cup\{0\}$.
The local model associated with the situation $x_{1:k}$ then models beliefs about the value of the next state $X_{k+1}$, conditional on the history represented by $x_{1:k}$.
We let $\situations{}\coloneqq \cup_{i\in\natz{}}\statespace{}^i$ be the set of all situations and we denote the \emph{initial (empty) situation} by $\square \coloneqq x_{1:0}=\statespace{}^0$.


Among the most popular types of local uncertainty models are (probability) mass functions $p$ on $\statespace{}$; for any situation $x_{1:k}\in\situations{}$, the mass function $p(\cdot \vert x_{1:k})$ then provides, for each $x_{k+1}\in\statespace{}$, the probability $p(x_{k+1} \vert x_{1:k})$ that the value of the state $X_{k+1}$ will be equal to $x_{k+1}$.
Such a family of probability mass functions is represented by a single function $p\colon s\in\situations{}\mapsto p(\cdot\vert s)$, which we call a \emph{precise probability tree}.\footnote{The reason why we call it a `tree' is because it is a map on $\situations{}$, which can naturally be visualised in terms of infinite (event) trees \cite[Figure~1]{DECOOMAN201618}.}
What is equivalent, but less of a popular habit, is to attach to each possible situation $x_{1:k}\in\situations{}$ an expectation $\prev{}_{x_{1:k}}$ on the set $\gengambles{}(\statespace{})$ of all real-valued functions $f$ on $\statespace{}$.  
These expectations $\prev{}_{x_{1:k}}$ may then be interpreted in a measure-theoretic sense, as coming from an underlying family of mass functions $p(\cdot\vert x_{1:k})$, but they can also be interpreted in a direct behavioural way as a subject's fair prices, as De Finetti does~\cite{DeFinetti:oT_PWtAE}.

Unfortunately, irrespective of one's preference between mass functions and linear expectations, both of them are rather inadequate when modelling situations where data is scarce, or when modelling the beliefs of a conservative (risk-averse) subject.
In such situations, one can reach for so-called `imprecise' probability models \cite{troffaes2014,Walley:1991vk,Augustin:2014di}.
These come in many different shapes and forms (e.g. sets of desirable gambles, belief functions, credal sets,...), but, for our purpose of modelling the local dynamics of a process, we will only consider two specific---yet wide-spread---ones; credal sets and coherent upper (and lower) expectations.

The first, \emph{credal sets}, are closed (under the topology of pointwise convergence) convex sets of probability mass functions; see e.g. \cite[Section 9.2]{Augustin:2014di}.
If we attach to each situation $s\in\situations{}$ a credal set $\probtree_{s}$ on $\statespace{}$, then we obtain a so-called \emph{imprecise probability tree} $\probtree_{\text{\relscale{0.8}$\bullet$}}\colon s\in\situations{}\mapsto \probtree_{s}$, which we will often simply denote by $\probtree{}$.
For any $s\in\situations{}$, the associated credal set $\probtree_{s}$ may then be interpreted as a set that contains all local mass functions $p(\cdot\vert s)$ that are deemed `possible'.
Such an imprecise probability tree $\probtree$ parametrises the stochastic process as a whole, and clearly does so in a more general manner than the precise methods mentioned earlier; precise probability trees correspond to the special case where, for each $s\in\situations{}$, $\probtree_{s}$ consists of a single mass function $p(\cdot\vert s)$.
We say that a precise probability tree $p$ is \emph{compatible} with an imprecise probability tree $\probtree$, and write $p\sim\probtree$, if $p(\cdot\vert s)\in\probtree_{s}$ for all $s\in\situations{}$.

Another---yet equivalent---approach consists in specifying a \emph{local coherent upper (or lower) expectation} $\lupprev{s}$ for each $s\in\situations{}$ \cite{Walley:1991vk}: a real-valued function on $\gengambles{}(\statespace{})$ that satisfies, for all $f,g\in\gengambles{}(\statespace{})$ and $\lambda\in\nnegreals{}$,
\begin{enumerate}[leftmargin=*,ref={\upshape{}C\arabic*},label={\upshape{}C\arabic*}.,itemsep=3pt, series=sepcoherence ]
\item \label{local coherence: upper bound} 
$\lupprev{s}(f) \leq \sup f$ \hfill \text{[upper bounds]};  
\item \label{local coherence: sub-additivity} 
$\lupprev{s}(f+g) \leq \lupprev{s}(f) + \lupprev{s}(g)$ \hfill \text{[sub-additivity]};
\item \label{local coherence: nneg homogeneity}
$\lupprev{s}(\lambda f) = \lambda \lupprev{s}(f)$ \hfill \text{[non-negative homogeneity]}.
\end{enumerate}
Any such family $(\lupprev{s})_{s\in\situations{}}$ of local coherent upper expectations will be gathered in a single \emph{upper expectation tree} \/ $\smash{\overline{\mathrm{Q}}}_{\text{\relscale{0.8}$\bullet$}} \colon s\in\situations{}\mapsto\lupprev{s}$, which we will also simply denote by $\smash{\overline{\mathrm{Q}}}$.
For any $x_{1:k}\in\situations{}$, the upper expectation $\lupprev{x_{1:k}}$ can be interpreted as representing a subject's minimum selling prices---a generalisation of De Finetti's fair price interpretation for linear expectations.\footnote{Traditionally, the behavioural interpretation of coherent upper expectations says that they represent infimum selling prices, rather than minimum selling prices; see Ref. \cite{Walley:1991vk}.
We opt for minimum selling prices here because they fit more naturally with the supermartingales that we will introduce further on.}
More concretely, this interpretation says that, given a situation $x_{1:k}\in\situations{}$ and any $f\in\gengambles{}(\statespace{})$, our subject is willing to sell the uncertain---possibly negative---payoff $f(X_{k+1})$ for any price $\alpha \geq \lupprev{x_{1:k}}(f)$.
Axioms~\ref{local coherence: upper bound}--\ref{local coherence: nneg homogeneity} can then be seen as rationality criteria.
We refer to Walley's work \cite{Walley:1991vk} for a more detailed motivation and justification for coherent upper (and lower) expectations.

Mathematically speaking, it does not matter whether we use imprecise probability trees or upper expectation trees to characterise a stochastic process, because credal sets and coherent upper expectations---and therefore imprecise probability trees and upper expectation trees---are in a one-to-one relation with each other.
In particular, with any imprecise probability tree $\probtree$, we can associate an upper expectation tree $\smash{\overline{\mathrm{Q}}}_{\text{\relscale{0.8}$\bullet$},\probtree{}}$ that maps each situation $s\in\situations{}$ to the upper envelope $\lupprev{s,\probtree{}}$ of the linear expectations corresponding to $\probtree_{s}$:
\begin{equation*}
\lupprev{s,\probtree{}}(f)\coloneqq\sup\Big\{\sum_{x\in\statespace{}}f(x)p(x\vert s) \colon p(\cdot\vert s)\in\probtree_{s}\Big\},
\end{equation*}
for all $f\in\gengambles{}(\statespace{})$.
That each $\lupprev{s,\probtree{}}$ is indeed a local coherent upper expectation follows from \cite[Theorem 3.6.1]{Walley:1991vk}.
Conversely, with any upper expectation tree $\smash{\overline{\mathrm{Q}}}$, we can associate an imprecise probability tree $\probtree_{\text{\relscale{0.8}$\bullet$},\smash{\overline{\mathrm{Q}}}}$;
for any $s\in\situations{}$, its local credal set $\probtree_{s,\smash{\overline{\mathrm{Q}}}}$ is the closed convex set of all mass functions $p(\cdot\vert s)$ that are dominated by $\lupprev{s}$, in the sense that
\begin{equation*}
\sum_{x\in\statespace{}}f(x)p(x\vert s) \leq \lupprev{s}(f) \text{ for all } f\in\gengambles{}(\statespace{}).
\end{equation*}
It follows once more from \cite[Theorem 3.6.1]{Walley:1991vk} that this correspondence between upper expectation trees and imprecise probability trees is one-to-one; that is, the map $\probtree\mapsto\smash{\overline{\mathrm{Q}}}_{\text{\relscale{0.8}$\bullet$},\probtree{}}$ is bijective and $\smash{\overline{\mathrm{Q}}}\mapsto\probtree_{\text{\relscale{0.8}$\bullet$},\smash{\overline{\mathrm{Q}}}}$ is its inverse.
We say that an imprecise probability tree $\probtree$ and an upper expectation tree $\smash{\overline{\mathrm{Q}}}$ \emph{agree} if they are related through these mappings.

An important consequence of the one-to-one relation described above is that imprecise probability trees and upper expectation trees can borrow each others interpretation;
local credal sets can be interpreted as representing a subject's infimum selling prices, whereas local upper expectations can be interpreted as upper envelopes of the linear expectations associated with an underlying local credal set.


\section{Three Types of Global Models}

Imprecise probability trees and upper expectation trees describe the dynamics of a stochastic process on a local level---how it changes from one time instant to the next---but they do not tell us anything, at least not directly, about more global features that relate to multiple time instances at once;
e.g. the time it takes until the process is in a given state $x\in\statespace{}$.
We therefore face the following question.
How do we turn the local information captured by any of these trees into global information about the process as a whole?
Three possible solutions are described in the current section, but we start by introducing some necessary terminology and notation.

\subsection{Preliminaries}\label{Sect: Preliminaries} 

A \emph{path} $\omega = x_1 x_2 x_3 \cdots$ is an infinite sequence of state values and represents a possible evolution of the process. 
The \emph{sample space}\, $\samplespace{} \coloneqq \statespace{}^\nats$ denotes the set of all paths.
For any $\omega=x_1 x_2 x_3 \cdots\in\samplespace$, we let $\omega^k\coloneqq x_{1:k} \in\statespace{}^k$ be the finite sequence that consists of the initial $k$ state values, and we let $\omega_k\coloneqq x_k \in\statespace$ be the $k$-th state value.
An \emph{event} $A \subseteq \samplespace$ is a set of paths and, in particular, for any situation $x_{1:k}\in\situations{}$, the \emph{cylinder event} $\Gamma(x_{1:k}) \coloneqq \{\omega\in\samplespace\colon \omega^k = x_{1:k}\}$ is the set of all paths that go through the situation $x_{1:k}$.

We let $\extreals\coloneqq\reals\cup\{+\infty, -\infty\}$ be the extended real numbers, $\nnegextreals$ be the subset of non-negative ones, and $\nnegreals{}$ be those that are moreover real.
We extend the total order relation $<$ on $\reals{}$ to $\extreals$ by positing that $-\infty<c<+\infty$ for all $c\in\reals{}$ and endow $\extreals{}$ with the associated order topology.

Any extended real-valued function $f\colon\posspace{}\to\extreals{}$ on some non-empty set $\posspace{}$ will be called a \emph{variable}.
Any bounded variable---that is, a variable $f$ for which there is a $B\in\nnegreals{}$ such that $-B\leq f(y)\leq B$ for all $y\in\posspace{}$---will be called a \emph{gamble}.\footnote{This choice of terminology is due to Walley \cite{Walley:1991vk}. However, for us, the mathematical object of a gamble is not necessarily bound to the interpretation as an uncertain payoff.}
The set of all variables will be denoted by $\genvariables{}(\posspace{})$ and the set of all gambles by $\gengambles{}(\posspace{})$.
Note that this definition is in accordance with our earlier use of $\gengambles{}(\statespace{})$, where it denoted the real-valued functions on $\statespace{}$---which are automatically bounded because $\statespace{}$ is finite.
The elements of $\genvariables{}(\statespace{})$ and $\gengambles{}(\statespace{})$ are called \emph{local} variables and gambles, respectively.
On the other hand, the variables in $\extvariables{}\coloneqq\genvariables{}(\samplespace{})$ and $\gambles{}\coloneqq\gengambles{}(\samplespace{})$ are called \emph{global} variables and gambles, respectively; they may depend on the entire path $\omega\in\samplespace{}$ taken by the process.
Variables that only depend on the process' state at a finite number of time instances are called \emph{finitary}; for such a finitary variable $f\in\extvariables{}$, there is an $n\in\nats{}$ and some $g\in\genvariables{}(\statespace{}^n)$ such that $f(\omega) = g(\omega^n)$ for all $\omega\in\samplespace{}$.
We often make this explicit by writing $f=g(X_{1:n})$, where $g(X_{1:n})\coloneqq g \circ X_{1:n}$ and where $X_{1:n}$ is the projection of $\omega\in\samplespace{}$ on its first $n$ state values $\omega^n$.
Sometimes, we also allow ourselves a slight abuse of notation by writing $f(x_{1:n})$ to denote the constant value of $f(\omega)=g(x_{1:n})$ on all paths $\omega\in\samplespace{}$ such that $\omega^n = x_{1:n}$.
We collect all finitary gambles in the set $\fingambles{}$.
A special type of global gamble is the \emph{indicator} $\indica{A}$ of an event $A$, which assumes the value $1$ on $A$ and $0$ elsewhere.
For any $s \in \situations{}$, the indicator $\indica{s} \coloneqq \indica{\Gamma(s)}$ of the cylinder event $\Gamma(s)$ is clearly a finitary gamble.

A \emph{global upper expectation}, finally, is a map $\upprev{} \colon \extvariables{} \times \situations{} \to \extreals{}$; it maps global variables $f\in\extvariables{}$ and situations $s\in\situations{}$ to a corresponding (conditional) upper expectation $\upprev{}(f \vert s)$.
As we will see, such maps can play the role of a global uncertainty model, in the sense that they can represent beliefs or knowledge about the path $\omega$ taken by the process, or about the value attained by a global variable $f$.
Apart from global upper expectations, one can also consider \emph{global lower expectations} $\lowprev{}\colon\extvariables{}\times\situations{}\to\extreals{}$; 
for each of the models that we will consider, these are conjugate to the corresponding global upper expectation $\upprev{}$, in the sense that $\lowprev{}(f\vert s) = -\upprev{}(-f\vert s)$ for all $f\in\extvariables{}$ and $s\in\situations{}$.
It therefore suffices to focus on only one of them;
our theoretical developments focus on $\upprev{}$, leaving the implications for $\lowprev{}$ for Section~\ref{Sect: Relation with Shafer and Vovk}.


\subsection{Measure-Theoretic Global Upper Expectations}\label{Sect: Measure-Theoretic Global Upper Expectations}

We start by presenting a traditional measure-theoretic approach, where global upper expectations are defined as upper envelopes of sets of (linear) expectations, and where each of these (linear) expectations on its turn is derived from a different probability measure on $\samplespace{}$. 

  
Consider an imprecise probability tree $\probtree{}$ and let $p\sim\probtree{}$ be any precise probability tree that is compatible with $\probtree{}$.
With each $x_{1:k}\in\situations{}$, we associate a probability measure $\prob_{p}(\cdot\vert x_{1:k})$ on the $\sigma$-algebra $\mathscr{F}$ generated by all cylinder events as follows.
First, for any \(\ell \in \natz{}\) and any \(C \subseteq \statespace{}^\ell\), let
\begin{align}\label{Eq: precise probability on algebra}
&\prob_{p}(C \vert x_{1:k}) 
\coloneqq
\prob_{p}(\cup_{z_{1:\ell}\in C} \Gamma(z_{1:\ell}) \vert x_{1:k}) 
\coloneqq \sum_{z_{1:\ell} \in C} \prob_{p}(z_{1:\ell} \vert x_{1:k}), \nonumber \\ &\text{where }
\prob_{p}(z_{1:\ell} \vert x_{1:k}) \coloneqq \\
&\hspace*{35pt}
\begin{aligned}
\begin{cases}
\prod_{i=k}^{\ell-1} p( z_{i+1} \vert z_{1:i}) &\text{ if }  k < \ell 
\text{ and } z_{1:k} = x_{1:k} \\
1 &\text{ if } k \geq \ell 
\text{ and } z_{1:\ell} = x_{1:\ell}  \\
0 &\text{ otherwise. } \nonumber
\end{cases}
\end{aligned}
\end{align}
It is then easy to see that, on the algebra generated by the cylinder events, $\prob_{p}(\cdot \vert x_{1:k})$ forms a finitely additive probability \cite[Chapter 3]{8535240}.
Hence, by \cite[Theorem 2.3]{billingsley1995probability}, it is also a countably additive probability---that is, a probability measure---on this algebra and so, by Carathéodory's extension theorem \cite[Theorem 1.7]{williams1991probability}, $\prob_{p}(\cdot \vert x_{1:k})$ can be uniquely extended to a probability measure on \(\mathscr{F}\).

In accordance with standard practices, we then associate with every probability measure $\prob_{p}(\cdot \vert s)$ an expectation $\mprev{p}(\cdot\vert s)$ using Lebesgue integration.
That is, we let $\smash{\mprev{p}(f \vert s)\coloneqq \int_{\samplespace{}} f \dif{\prob_{p}(\cdot \vert s)}}$ for all $f\in\extvariables{}$ for which $\smash{\int_{\samplespace{}} f \dif{\prob_{p}(\cdot \vert s)}}$ exists, which is guaranteed if $f$ is $\mathscr{F}$-measurable and bounded below (or bounded above).
For general $f\in\extvariables{}$, we adopt an upper integral $\mupprev{p}(f \vert s)$ defined by \vspace*{-2pt}
\begin{equation}\label{Eq: upper integral}
\mupprev{p}(f \vert s) 
\coloneqq \inf \Bigl\{ \mprev{p}(g \vert s) \colon g\in\overline{\mathbb{V}}_{\sigma,\mathrm{b}} \text{ and } g \geq f \Bigr\},\vspace*{-2pt}
\end{equation}
where $\overline{\mathbb{V}}_{\sigma,\mathrm{b}}$ is the set of all bounded below $\mathscr{F}$-measurable variables in $\extvariables{}$.
It follows from \cite[Proposition~12]{TJOENS202130} that $\mupprev{p}(\cdot\vert s)$ coincides with $\mprev{p}(\cdot\vert s)$ on the entire domain where $\mprev{p}(\cdot\vert s)$ is well-defined---that is, where the Lebesgue integral with respect to $\prob_{p}(\cdot\vert s)$ exists---and hence, that $\mupprev{p}(\cdot\vert s)$ is an extension of $\mprev{p}(\cdot\vert s)$.

Finally, the global upper expectation $\mupprev{\probtree{}}$ corresponding to the imprecise probability tree $\probtree{}$ is defined as the upper envelope of the upper integrals $\smash{\mupprev{p}}$ corresponding to each of the precise trees $\smash{p\sim\probtree{}}$. 
That is, for each $f\in\extvariables{}$ and $s\in\situations{}$,\vspace*{-2pt}
\begin{align*}
\mupprev{\probtree{}}(f \vert s)\coloneqq \sup\big\{\mupprev{p}(f \vert s) \colon p\sim\probtree{}\big\}.\vspace*{-2pt}
\end{align*}
This definition is in line with the sensitivity analysis interpretation for imprecise probability models \cite[Section~1.1.5]{Walley:1991vk}, which regards them as resulting from a lack of knowledge about a single ideal precise model.

The approach set out above should look familiar to anyone with a measure-theoretic background, and we therefore omit an in-depth conceptual discussion; we instead refer to \cite[Section~9]{TJOENS202130} for more details.
One aspect, however, that we feel is worth pointing out is the difference between our way of conditioning and what is usually done in measure-theory.
Usually, conditional expectations (and probabilities) are derived from a single unconditional probability measure through the Radon-Nikodym derivative \cite[Section 2.7.2]{shiryaev2016probabilityPartI}.
We, on the other hand, associate with each situation $s\in\situations{}$ a separate---in the traditional sense, unconditional---probability measure $\prob_{p}(\cdot\vert s)$ and use this probability measure $\prob_{p}(\cdot\vert s)$ to define the expectation $\mprev{p}(\cdot\vert s)$.
The reason why we do so is because, unlike the traditional approach, it allows us to condition---in a meaningful way---on (cylinder) events with probability zero;
again, we refer to \cite[Section~9]{TJOENS202130} for more details.

\subsection{Game-Theoretic Global Upper Expectations}

The second global model that we will consider is the game-theoretic upper expectation introduced and, for the most part, developed by Shafer and Vovk \cite{Shafer:2005wx,Vovk2019finance}.
This operator is defined in terms of infimum hedging prices; 
starting capitals that allow a gambler to cover---or hedge---the costs or gains of a given global gamble.
These hedging prices---and hence, these game-theoretic upper expectations---are determined using the notion of a supermartingale; a function that describes the possible evolution of a gambler's capital as he gambles in a way that is in accordance with the local models $\lupprev{s}$.

Formally, for any upper expectation tree $\smash{\overline{\mathrm{Q}}}$, a \emph{supermartingale} $\martingale{}$ is a real-valued function on $\situations{}$ that satisfies $\lupprev{s}(\martingale{}(s \cdot)) \leq \martingale{}(s)$ for all $s\in\situations{}$, where $\martingale{}(s \cdot)\in\gengambles{}(\statespace{})$ denotes the local gamble that takes the value $\martingale{}(s x)$ in $x\in\statespace{}$.
How can such a supermartingale be interpreted in the way described above?
Consider any situation $x_{1:k}\in\situations{}$ and a gambler---called `Skeptic' in Shafer and Vovk's framework---whose current capital equals $\martingale{}(x_{1:k})$.
Then, recalling our interpretation for the local model $\lupprev{x_{1:k}}$ as representing a subject's minimum selling prices, the condition that $\lupprev{x_{1:k}}(\martingale{}(x_{1:k} \cdot)) \leq \martingale{}(x_{1:k})$ implies that Skeptic can use his capital $\martingale{}(x_{1:k})$ to buy the uncertain reward $\martingale{}(x_{1:k}X_{k+1})$ 
from this subject---called `Forecaster' in Shafer and Vovk's framework.
If Skeptic chooses to commit to such a transaction, he is actually gambling \emph{against} Forecaster, which explains why these players are called Skeptic and Forecaster.
So we see that a supermartingale describes the evolution of Skeptic's capital if he chooses, in each situation, to buy a gamble that Forecaster is willing to sell.

A \emph{hedging price} $\alpha\in\reals{}$ for any $f\in\gambles{}$ is now a real number for which there is a bounded below supermartingale $\martingale{}$ that starts in $\martingale{}(\Box)=\alpha$ and such that $\liminf \martingale{}(\omega)\coloneqq\liminf_{k\to+\infty}\martingale{}(\omega^k) \geq f(\omega)$ for all $\omega\in\samplespace{}$.
A hedging price $\alpha$ for $f$ is therefore worth more to Skeptic than the global gamble $f$, because he is always able to eventually turn the initial capital $\martingale{}(\Box)=\alpha$ into a capital that is higher than the uncertain payoff corresponding to $f$, simply by choosing the right gambles from the ones Forecaster is offering.
That $\martingale{}$ should be bounded below, represents the condition that Skeptic can borrow at most a finite amount.

For any $f\in\gambles{}$, the infimum over all the hedging prices $\alpha$ is then what defines the (unconditional) global game-theoretic upper expectation $\upprevvovk{\smash{\overline{\mathrm{Q}}}}(f)$ of $f$.
More generally, the \emph{global game-theoretic upper expectation} of any $f\in\gambles{}$ conditional on any $s\in\situations{}$, is defined as 
\begin{multline}\label{Eq: definition real-valued game-theoretic upper expectation}
\upprevvovk{\smash{\overline{\mathrm{Q}}}}(f\vert s)\coloneqq\inf\big\{\martingale{}(s)\colon \martingale{}\in\setofsupmartb{}(\smash{\overline{\mathrm{Q}}}), \\
 (\forall\omega\in\Gamma(s))\liminf \martingale{}(\omega)\geq f(\omega)\big\},
\end{multline}
where $\setofsupmartb{}(\smash{\overline{\mathrm{Q}}})$ denotes the set of all bounded below supermartingales.
The unconditional case corresponds to $s=\Box$; so, $\upprevvovk{\smash{\overline{\mathrm{Q}}}}(f)\coloneqq\upprevvovk{\smash{\overline{\mathrm{Q}}}}(f\vert \Box)$.

As the attentive reader may have noticed, the definition above only applies to global gambles.
So why not to general variables $f\in\extvariables{}$?
The reason is that, on this extended domain, the formula presented above would yield an upper expectation with rather weak continuity properties \cite[section~8]{Tjoens2020FoundationsARXIV}.
A simple solution is to use continuity with respect to so-called upper and lower cuts to extend the domain from $\gambles{}\times\situations{}$ to $\extvariables{}\times\situations{}$.\footnote{This is similar to how  \cite[Chapter~15]{troffaes2014} extends the notion of coherence from gambles to unbounded real-valued variables.}
To do so, for any $f\in\extvariables{}$ and any $c\in\reals{}$, let $f^{\wedge c}$ be defined by $f^{\wedge c}(x)\coloneqq\min\{f(x),c\}$ for all $x\in\statespace{}$, and let $f^{\vee c}$ be defined analogously, as a pointwise maximum.
Then we henceforth let $\upprevvovk{\smash{\overline{\mathrm{Q}}}}\colon\extvariables{}\times\situations{}\to\extreals{}$ be defined by Equation~\eqref{Eq: definition real-valued game-theoretic upper expectation} on $\gambles{}\times\situations{}$, and furthermore impose, for any $s\in\situations{}$, that 
\begin{enumerate}[leftmargin=*,ref={\upshape{G\arabic*}},label={\upshape{}G\arabic*}.,itemsep=-0pt, series=sepcoherence]
\item \label{global upper cuts} \(\upprevvovk{\smash{\overline{\mathrm{Q}}}}(f\vert s) = \lim_{c\to+\infty} \upprevvovk{\smash{\overline{\mathrm{Q}}}}(f^{\wedge c}\vert s)\) for all \(f\in\bextvariables{}\);
\item \label{global lower cuts} \(\upprevvovk{\smash{\overline{\mathrm{Q}}}}(f\vert s)=\lim_{c\to-\infty} \upprevvovk{\smash{\overline{\mathrm{Q}}}}(f^{\vee c}\vert s)\) for all \(f\in\extvariables{}\).
\end{enumerate}
Properties~\ref{global upper cuts} and~\ref{global lower cuts} together clearly imply that $\upprevvovk{\smash{\overline{\mathrm{Q}}}}$ is uniquely determined by its values on $\gambles{}\times\situations{}$.
Hence, since $\upprevvovk{\smash{\overline{\mathrm{Q}}}}$ on this domain is described by Equation~\eqref{Eq: definition real-valued game-theoretic upper expectation}, it follows that $\upprevvovk{\smash{\overline{\mathrm{Q}}}}$ is uniquely defined on all of $\extvariables{}\times\situations{}$.

This way of extending a global game-theoretic upper expectation is not that common, though.
A technique that is used more often consists in directly applying Equation~\eqref{Eq: definition real-valued game-theoretic upper expectation} to the entire domain $\extvariables{}\times\situations{}$, but with the real-valued supermartingales replaced by extended real-valued ones \cite{Vovk2019finance,TJOENS202130,Tjoens2020FoundationsARXIV}.
This of course first requires an extension $\extlupprev{s}$ of the local models $\lupprev{s}$ to the domain $\smash{\genvariables{}(\statespace{})}$, which can be done in a way similar to what we have done with $\upprevvovk{\smash{\overline{\mathrm{Q}}}}$, by imposing continuity with respect to upper and lower cuts.\footnote{The choice of extending the local models $\lupprev{s}$ in this particular way, by imposing continuity with respect to upper and lower cuts, is motivated in \cite[Sections~2 and~8]{Tjoens2020FoundationsARXIV} and \cite[section~6]{TJOENS202130}, and is, as far as the resulting global game-theoretic upper expectation---with extended real-valued supermartingales---is concerned, completely equivalent with how Shafer and Vovk axiomatise their local models in \cite[Part~II]{Vovk2019finance}.}
An extended real-valued supermartingale $\martingale{}\colon\situations{}\to\extreals{}$ is then characterised by the condition that $\extlupprev{s}(\martingale{}(s \cdot)) \leq \martingale{}(s)$ for all $s\in\situations{}$.
Remarkably enough, the global game-theoretic upper expectation that results from this `extended supermartingale'-approach is identical to the operator $\upprevvovk{\smash{\overline{\mathrm{Q}}}}$ we have defined above, using Properties~\ref{global upper cuts} and~\ref{global lower cuts}; see for example the end of \cite[Section~8]{Tjoens2020FoundationsARXIV}.
We favor our approach, though, because the use of extended real-valued supermartingales undermines what we think is a key strength of the game-theoretic approach: 
that supermartingales---and hence the resulting game-theoretic upper expectations---can be given a clear behavioural meaning in terms of betting.

\subsection{Axiomatic Global Upper Expectations}
\label{Sect: Axiomatic Global Upper Expectations}

Instead of relying on measure-theoretic or game-theoretic principles, one can also simply adopt an abstract global model $\upprev{}$ that is completely characterised by a number of axioms.
In particular, starting from any given upper expectation tree $\overline{\mathrm{Q}}$, we suggest to impose the following list of axioms:
\begin{enumerate}[leftmargin=*,ref={\upshape P\arabic*},label={\upshape P\arabic*}.,itemsep=1pt,series=Properties]
\item \label{P compatibility}
$\upprev (f(X_{n+1}) \vert \sit{}) = \lupprev{\sit{}}(f)$ for all $f \in \gengambles{}(\statespace{})$ and \\ all $\sit{} \in \situations{}$.
\item \label{P conditional} 
		$\upprev (f \vert s) = \upprev (f \, \indica{s} \vert s)$ for all $f \in \fingambles{}$ and all $s \in \situations{}$.
\item \label{P iterated} 
$\upprev(f \vert X_{1:k}) \leq \upprev(\upprev(f \vert X_{1:k+1}) \vert X_{1:k})$ for all $f \in \fingambles{}$ and \\all $k \in \natz$.
\item \label{P monotonicity} 
$f \leq g \Rightarrow \upprev(f \vert s) \leq \upprev(g \vert s)$ for all $f,g \in \extvariables{}$ and all $s\in\situations{}$.
\item \label{P continuity}
For any sequence \((f_n)_{n \in \nats}\) of finitary gambles that is uniformly bounded below and any \(s \in \situations{}\):
\begin{equation*}
\lim_{n \to +\infty} f_n = f \Rightarrow 
\limsup_{n \to +\infty} \upprev{}(f_n \vert s) \geq \upprev{}(f \vert s).\vspace*{-4pt}
\end{equation*}
\end{enumerate}
\noindent
Here, as well as further on, we call a sequence $(f_n)_{n \in \nats{}}$ of variables uniformly bounded below if there is some $B\in\reals{}$ such that $f_n(\omega) \geq B$ for all $n \in \nats{}$ and $\omega \in \samplespace{}$.
Furthermore, the limit $\lim_{n \to +\infty} f_n$, as well as all others in this paper, are intended to be taken pointwise.

Axioms~\ref{P compatibility}--\ref{P continuity} are put forward here because, as we argue in \cite[Section~4]{TJOENS202130}, they can be motivated on the basis of two different interpretations for a global upper expectation; a direct behavioural interpretation in terms of minimum selling prices, or a probabilistic interpretation in terms of sets of linear expectations (or probability measures).
Basically, we find Axioms~\ref{P compatibility}--\ref{P monotonicity} straightforward and believe them to be almost unquestionable, regardless of the adopted interpretation.
Axiom~\ref{P continuity}, which imposes a form of continuity, is perhaps more disputable.
Nonetheless, compared to other well-known continuity properties, such as dominated convergence or monotone convergence, Property~\ref{P continuity} is rather weak because it only applies to sequences of finitary gambles.
Note that, in general, finitary gambles play a central role in our axiomatisation; with the exception of monotonicity (Axiom~\ref{P monotonicity}), all our axioms exclusively apply to finitary gambles (and their limits).
We find this important because, as explained in \cite[Section~4]{TJOENS202130}, they are the only global variables that we feel can be given a direct operational meaning, and hence, the only global variables for which axioms can be motivated directly.
More general global variables in $\extvariables{}$, on the other hand, that depend on an infinite number of state values, or are unbounded or even infinite-valued, should be regarded as abstract idealisations.

Of course, even if we agree upon Axioms~\ref{P compatibility}--\ref{P continuity}, it does not necessarily provide us with a global upper expectation because there may be multiple---or, worse, no---global upper expectations satisfying these axioms.
The following result shows that there is at least one model that satisfies \ref{P compatibility}--\ref{P continuity}, and that among all the ones that satisfy them, there is a unique most conservative---that is, largest---one.
We denote this model by $\axupprev{\smash{\overline{\mathrm{Q}}}}$.

\begin{theorem}[{\cite[Theorem~6]{TJOENS202130}}]\label{theorem: Vovk is the largest}
For any upper expectation tree $\smash{\overline{\mathrm{Q}}}$, there is a unique most conservative global upper expectation \(\axupprev{\smash{\overline{\mathrm{Q}}}}\) that satisfies \ref{P compatibility}--\ref{P continuity}.
\end{theorem}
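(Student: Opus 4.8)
The plan is to show that the game-theoretic upper expectation $\upprevvovk{\smash{\overline{\mathrm{Q}}}}$, defined in Equation~\eqref{Eq: definition real-valued game-theoretic upper expectation} together with Properties~\ref{global upper cuts} and~\ref{global lower cuts}, is exactly the claimed model $\axupprev{\smash{\overline{\mathrm{Q}}}}$. Concretely, I would establish two facts: (i) $\upprevvovk{\smash{\overline{\mathrm{Q}}}}$ itself satisfies Axioms~\ref{P compatibility}--\ref{P continuity}, which yields existence; and (ii) every global upper expectation $\upprev{}$ satisfying \ref{P compatibility}--\ref{P continuity} is dominated by $\upprevvovk{\smash{\overline{\mathrm{Q}}}}$, that is, $\upprev{}(f\vert s)\leq\upprevvovk{\smash{\overline{\mathrm{Q}}}}(f\vert s)$ for all $f\in\extvariables{}$ and $s\in\situations{}$. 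Since pointwise domination is a partial order, (i) and (ii) together force $\upprevvovk{\smash{\overline{\mathrm{Q}}}}$ to be simultaneously a member and an upper bound of the family of admissible models, hence its unique largest---most conservative---element.

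For existence, Axiom~\ref{P monotonicity} is immediate from the $\liminf$-domination condition in Equation~\eqref{Eq: definition real-valued game-theoretic upper expectation}. Axiom~\ref{P compatibility} follows by identifying the cheapest bounded-below supermartingale hedging a one-step gamble $f(X_{n+1})$ in the situation $\sit{}$: its initial value is pinned to $\lupprev{\sit{}}(f)$ by the supermartingale inequality at $\sit{}$ together with the coherence of $\lupprev{\sit{}}$. Axiom~\ref{P conditional} follows because a supermartingale may be freely modified outside $\Gamma(s)$ without changing what it hedges inside $\Gamma(s)$, so only its behaviour on $\Gamma(s)$ is relevant. Axiom~\ref{P iterated} follows by concatenation: given near-optimal hedges for $f$ in each situation $\situation{1}{k+1}$, one glues them onto an outer supermartingale defined on the level-$k$ situations, obtaining a single hedge whose initial value in $\situation{1}{k}$ is controlled by the iterated expression.

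The real obstacle is Axiom~\ref{P continuity}, the continuity (Fatou-type) property for $\upprevvovk{\smash{\overline{\mathrm{Q}}}}$. Given finitary gambles $f_n\to f$ that are uniformly bounded below, I must pass from near-optimal hedging supermartingales for the $f_n$ to control of $\upprevvovk{\smash{\overline{\mathrm{Q}}}}(f\vert s)$; the difficulty is that pointwise convergence of finitary gambles is a weak mode of convergence, so the approximating hedges need not themselves converge. I would attack this with a game-theoretic convergence argument: construct a single bounded-below supermartingale hedging $f$ out of supermartingales hedging the successive tails, with the uniform lower bound ensuring that the total initial capital stays controlled, and with the finiteness of $\statespace{}$ supplying the compactness needed to keep the construction bounded. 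This step is in essence a game-theoretic monotone/dominated-convergence theorem and constitutes the technical heart of the result.

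For maximality, let $\upprev{}$ satisfy \ref{P compatibility}--\ref{P continuity}. I would first prove domination on finitary gambles: for $f=g(X_{1:n})$ and a situation $\situation{1}{k}$, iterate \ref{P iterated} from level $k$ up to level $n$, evaluate each one-step conditional upper expectation using \ref{P compatibility}, and use \ref{P conditional} and \ref{P monotonicity} to handle the cylinder indicators; this yields $\upprev{}(f\vert\situation{1}{k})\leq V(f\vert\situation{1}{k})$, where $V$ is the backward-recursion value obtained by applying the local models $\lupprev{\cdot}$ from time $n$ back to $\situation{1}{k}$. Since $\upprevvovk{\smash{\overline{\mathrm{Q}}}}$ satisfies \ref{P compatibility}--\ref{P iterated} with equality in the iteration on finitary gambles, $V$ coincides with $\upprevvovk{\smash{\overline{\mathrm{Q}}}}$ on $\fingambles{}$, so $\upprev{}\leq\upprevvovk{\smash{\overline{\mathrm{Q}}}}$ there. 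Finally I would lift this to arbitrary $f\in\extvariables{}$: using the continuity of $\upprevvovk{\smash{\overline{\mathrm{Q}}}}$ established in Step~(i) and the fact---also part of that step---that $\upprevvovk{\smash{\overline{\mathrm{Q}}}}$ is generated from its values on $\fingambles{}$ through the cut operations \ref{global upper cuts}--\ref{global lower cuts} and pointwise limits of uniformly-bounded-below finitary gambles, I pick finitary gambles $f_n\to f$ uniformly bounded below with $\upprevvovk{\smash{\overline{\mathrm{Q}}}}(f_n\vert s)\to\upprevvovk{\smash{\overline{\mathrm{Q}}}}(f\vert s)$, so that \ref{P continuity} applied to $\upprev{}$ gives $\upprev{}(f\vert s)\leq\limsup_{n}\upprev{}(f_n\vert s)\leq\limsup_{n}\upprevvovk{\smash{\overline{\mathrm{Q}}}}(f_n\vert s)=\upprevvovk{\smash{\overline{\mathrm{Q}}}}(f\vert s)$; the remaining variables, not reachable as such limits, are covered by applying \ref{P monotonicity} to dominating reachable variables. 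This yields $\upprev{}\leq\upprevvovk{\smash{\overline{\mathrm{Q}}}}$ everywhere and completes the proof.
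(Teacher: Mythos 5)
Two preliminary remarks. First, this paper does not prove the statement at all: Theorem~\ref{theorem: Vovk is the largest} is imported from \cite[Theorem~6]{TJOENS202130}, the same external result that underlies Theorem~\ref{theorem: vovk and axiomatic are equal}. So the comparison is really against the cited proof, and your top-level plan---show that $\upprevvovk{\smash{\overline{\mathrm{Q}}}}$ itself satisfies \ref{P compatibility}--\ref{P continuity} and dominates every global upper expectation that does, whence it is the unique most conservative one---is exactly the strategy of that cited work. Second, your finitary-gamble step is sound: backward recursion through \ref{P iterated}, \ref{P compatibility}, \ref{P conditional} and \ref{P monotonicity} does pin $\upprev{}$ below the recursive value $V$, which coincides with $\upprevvovk{\smash{\overline{\mathrm{Q}}}}$ on $\fingambles{}$.

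The genuine gap is your final step, the lift of the domination $\upprev{}\leq\upprevvovk{\smash{\overline{\mathrm{Q}}}}$ from $\fingambles{}$ to all of $\extvariables{}$. Two problems. (a) Most variables are not ``reachable'' in your sense: pointwise limits of finitary gambles are Borel and of Baire class one relative to $\fingambles{}$, whereas $\extvariables{}$ contains everything; in particular, the natural dominating variable---$\liminf\martingale{}$ for a near-optimal hedging supermartingale $\martingale{}$---is in general an increasing limit of decreasing limits of finitary gambles, hence not itself reachable, so \ref{P continuity} cannot be applied to it in one step. (b) For non-reachable $f$, your fallback via \ref{P monotonicity} only yields $\upprev{}(f\vert s)\leq R(f\vert s)$, where $R(f\vert s)\coloneqq\inf\bigl\{\upprevvovk{\smash{\overline{\mathrm{Q}}}}(g\vert s)\colon g\geq f,\ g\ \text{reachable}\bigr\}$; but monotonicity of $\upprevvovk{\smash{\overline{\mathrm{Q}}}}$ gives $R(f\vert s)\geq\upprevvovk{\smash{\overline{\mathrm{Q}}}}(f\vert s)$, which points the wrong way. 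What you need is the outer-regularity inequality $R(f\vert s)\leq\upprevvovk{\smash{\overline{\mathrm{Q}}}}(f\vert s)$, and this cannot be waved through: granting your step~(i), the envelope $R$ itself satisfies \ref{P compatibility}--\ref{P continuity} (it agrees with $\upprevvovk{\smash{\overline{\mathrm{Q}}}}$ on finitary gambles and on all reachable variables, and is monotone by construction), so outer regularity is equivalent to the very maximality assertion being proved---the step as written is circular. Even on reachable $f$ there is a hole: you ``pick'' finitary gambles $f_n\to f$ with $\upprevvovk{\smash{\overline{\mathrm{Q}}}}(f_n\vert s)\to\upprevvovk{\smash{\overline{\mathrm{Q}}}}(f\vert s)$, but nothing in Equation~\eqref{Eq: definition real-valued game-theoretic upper expectation} or in \ref{global upper cuts}--\ref{global lower cuts} produces such a sequence; \ref{P continuity} only gives $\limsup_n\upprevvovk{\smash{\overline{\mathrm{Q}}}}(f_n\vert s)\geq\upprevvovk{\smash{\overline{\mathrm{Q}}}}(f\vert s)$, again the wrong direction, and convergence of the values is known only for monotone sequences (Propositions~\ref{prop: upward continuity measure-theoretic} and~\ref{Prop: decreasing continuity measure-theoretic}, i.e.\ \cite[Theorem~9]{TJOENS202130}). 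Closing these holes---supermartingale constructions that convert $\liminf$-hedges into monotone approximations by finitary gambles---is the actual technical content of the cited proof, alongside the verification of \ref{P continuity} for $\upprevvovk{\smash{\overline{\mathrm{Q}}}}$, which you rightly flag as hard but only gesture at.
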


\section{An Equality for Monotone Limits of Finitary Gambles}\label{Sect: An Equality for Monotone Limits of Finitary Gambles}

Having introduced all three global upper expectations, we can finally turn to the central problem of this paper: 
how are these upper expectations related to each other?
More specifically, we ask ourselves the following.
If the parameters of a stochastic process are equivalent---that is, if the trees $\probtree{}$ and $\smash{\overline{\mathrm{Q}}}$ agree---are the global models $\mupprev{\probtree{}}$, $\upprevvovk{\smash{\overline{\mathrm{Q}}}}$ and $\axupprev{\smash{\overline{\mathrm{Q}}}}$ then equal?
In a recent paper \cite{TJOENS202130}, we have shown that the answer is affirmative for the latter two models.
\begin{theorem}[{\cite[Theorem~6]{TJOENS202130}}]\label{theorem: vovk and axiomatic are equal}
The global upper expectations\/ $\axupprev{\smash{\overline{\mathrm{Q}}}}$ and\/ $\upprevvovk{\smash{\overline{\mathrm{Q}}}}$ are equal.
\end{theorem}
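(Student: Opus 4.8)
The plan is to exploit the characterisation provided by Theorem~\ref{theorem: Vovk is the largest}: since $\axupprev{\smash{\overline{\mathrm{Q}}}}$ is by definition the \emph{unique} largest global upper expectation that satisfies \ref{P compatibility}--\ref{P continuity}, it suffices to establish two facts. First, that $\upprevvovk{\smash{\overline{\mathrm{Q}}}}$ itself satisfies \ref{P compatibility}--\ref{P continuity}; by maximality of $\axupprev{\smash{\overline{\mathrm{Q}}}}$ this immediately yields $\upprevvovk{\smash{\overline{\mathrm{Q}}}}\leq\axupprev{\smash{\overline{\mathrm{Q}}}}$. Second, that $\upprevvovk{\smash{\overline{\mathrm{Q}}}}$ \emph{dominates} every global upper expectation satisfying these axioms; applied to $\axupprev{\smash{\overline{\mathrm{Q}}}}$ (which satisfies them), this yields $\axupprev{\smash{\overline{\mathrm{Q}}}}\leq\upprevvovk{\smash{\overline{\mathrm{Q}}}}$. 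The two inequalities give the claimed equality. Equivalently, the two facts together exhibit $\upprevvovk{\smash{\overline{\mathrm{Q}}}}$ as itself a largest model satisfying the axioms, so uniqueness forces it to coincide with $\axupprev{\smash{\overline{\mathrm{Q}}}}$.

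For the first fact I would work on $\fingambles{}$ (and $\gambles{}$) first, where $\upprevvovk{\smash{\overline{\mathrm{Q}}}}$ is given directly by the hedging-price formula~\eqref{Eq: definition real-valued game-theoretic upper expectation}, and then lift to all of $\extvariables{}$ through the cut properties \ref{global upper cuts} and \ref{global lower cuts}. Monotonicity \ref{P monotonicity} is immediate on gambles, since any bounded below supermartingale whose $\liminf$ dominates $g$ also dominates $f$ when $f\leq g$, so hedging prices for $g$ are hedging prices for $f$. Compatibility \ref{P compatibility} follows by exhibiting, in each situation $\sit{}$, a one-step supermartingale built from an (almost) optimal local move for $\lupprev{\sit{}}$ and checking both inequalities. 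Conditioning \ref{P conditional} follows by localising the betting game to paths through $s$, as capital values off $\Gamma(s)$ are irrelevant to the conditional infimum. The iterated inequality \ref{P iterated} is essentially the defining supermartingale property: an (almost) optimal supermartingale hedging $f$ from the situations $X_{1:k+1}$ can be spliced with an optimal one-step move at $X_{1:k}$ to produce a supermartingale hedging $f$ from $X_{1:k}$ with controlled initial capital. The substantive axiom is the continuity property \ref{P continuity}: given a uniformly bounded below sequence $(f_n)_{n\in\nats{}}$ of finitary gambles converging pointwise to $f$, and near-optimal hedging supermartingales $\martingale{}_n$ for the $f_n$, one must assemble a single bounded below supermartingale whose $\liminf$ dominates $f$ while keeping its initial capital controlled by $\limsup_n\upprevvovk{\smash{\overline{\mathrm{Q}}}}(f_n\mid s)$. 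This $\varepsilon$-accumulation/supermartingale-convergence argument is the delicate step.

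For the second fact I would use that the truncated capital $\martingale{}(X_{1:n})$ of any $\martingale{}\in\setofsupmartb{}(\smash{\overline{\mathrm{Q}}})$ is a finitary gamble, and combine \ref{P compatibility} with \ref{P iterated} and the supermartingale condition $\lupprev{t}(\martingale{}(t\cdot))\leq\martingale{}(t)$ to obtain $\upprev{}(\martingale{}(X_{1:n})\mid s)\leq\martingale{}(s)$ for every $n$ and every model $\upprev{}$ satisfying the axioms; a backward tree recursion of this kind also shows that $\upprevvovk{\smash{\overline{\mathrm{Q}}}}$ and any such $\upprev{}$ already agree on $\fingambles{}$. Converting the hedging condition $\liminf_k\martingale{}(\omega^k)\geq f(\omega)$ into a bound on $\upprev{}(f\mid s)$ is the crux: because \ref{P continuity} demands genuine pointwise convergence (not merely a $\liminf$) of a uniformly bounded below sequence of finitary gambles, one must build such a converging sequence from the truncations $\martingale{}(X_{1:n})$, exploiting that $\martingale{}$ is bounded below. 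Together with \ref{P monotonicity} and the cut properties \ref{global upper cuts}--\ref{global lower cuts}, this gives $\upprev{}\leq\upprevvovk{\smash{\overline{\mathrm{Q}}}}$ on all of $\extvariables{}$. The main obstacle of the whole proof is thus the same in both directions: reconciling the path-wise $\liminf$ behaviour of supermartingales with the pointwise-convergence form of the continuity axiom \ref{P continuity}, which requires a careful limiting construction rather than any routine estimate.
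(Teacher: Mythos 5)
Your reduction via Theorem~\ref{theorem: Vovk is the largest} is logically sound and involves no circularity: show that $\upprevvovk{\smash{\overline{\mathrm{Q}}}}$ satisfies \ref{P compatibility}--\ref{P continuity} (giving $\upprevvovk{\smash{\overline{\mathrm{Q}}}}\leq\axupprev{\smash{\overline{\mathrm{Q}}}}$ by maximality) and that it dominates every model satisfying the axioms (giving the converse). Be aware, though, that this paper contains no proof of the statement at all --- Theorem~\ref{theorem: vovk and axiomatic are equal} is imported, together with Theorem~\ref{theorem: Vovk is the largest}, from \cite[Theorem~6]{TJOENS202130}, where the result is phrased as ``$\upprevvovk{\smash{\overline{\mathrm{Q}}}}$ \emph{is} the most conservative model satisfying the axioms'' and is proved by exactly the two facts you list. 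So your architecture matches the source; the question is whether your sketch of the two facts amounts to a proof, and it does not.

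The genuine gaps are precisely the two steps you flag as ``delicate'' and then leave unexecuted, and at least one of them would fail as sketched. For the dominance direction, your plan is to feed truncations of a bounded below supermartingale $\martingale{}$ into \ref{P continuity}. The natural candidates are the finitary gambles $g_{n,m}\coloneqq\min_{n\leq k\leq m}\martingale{}(X_{1:k})$; the backward recursion plus \ref{P monotonicity} gives $\upprev{}(g_{n,m}\vert s)\leq\martingale{}(s)$, and \ref{P continuity} applied in $m$ gives $\upprev{}(h_n\vert s)\leq\martingale{}(s)$ for $h_n\coloneqq\inf_{k\geq n}\martingale{}(X_{1:k})$. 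But $h_n$ is \emph{not} finitary, and the remaining passage $h_n\uparrow\liminf_k\martingale{}(X_{1:k})\geq f$ requires a monotone-convergence property for non-finitary variables that the axioms simply do not provide; nor can you arrange a single diagonal sequence $g_{n,m_n}$ converging pointwise, since the required $m_n$ depends on the path $\omega$ and $\samplespace{}$ is uncountable. Closing this double-limit obstruction is the actual content of the cited proof. Likewise, \ref{P continuity} for $\upprevvovk{\smash{\overline{\mathrm{Q}}}}$ is not an ``$\varepsilon$-accumulation'' estimate: the standard countable convex combination $\sum_n\lambda_n\martingale{}_n$ of near-optimal supermartingales hedges $\sum_n\lambda_n f_n$, not $\lim_n f_n$, so a genuinely different construction is needed. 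Finally, your parenthetical claim that a backward tree recursion shows agreement of $\upprevvovk{\smash{\overline{\mathrm{Q}}}}$ with any axiom-satisfying $\upprev{}$ on $\fingambles{}$ hides the same issue: the recursion yields only $\upprev{}(g\vert s)\leq$ the recursion value, while the matching lower bound for $\upprevvovk{\smash{\overline{\mathrm{Q}}}}$ on finitary gambles again requires converting $\liminf$ hedging along infinite paths into a finite-horizon bound. In short: correct strategy, correctly located difficulties, but the difficult steps --- which are the theorem --- remain unproven.
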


So it only remains to study the relationship between the measure-theoretic upper expectation $\mupprev{\probtree{}}$ and the common upper expectation $\upprev{}_{\smash{\overline{\mathrm{Q}}}}\coloneqq\axupprev{\smash{\overline{\mathrm{Q}}}}=\upprevvovk{\smash{\overline{\mathrm{Q}}}}$.
To do so, we will build on two earlier results, gathered from that same paper \cite{TJOENS202130};
the first one \cite[Theorem~14]{TJOENS202130} says that $\mupprev{\probtree{}}$ coincides with $\upprev{}_{\smash{\overline{\mathrm{Q}}}}$ if the tree $\probtree$ is a precise probability tree $p$ (and $\smash{\overline{\mathrm{Q}}}$ is the agreeing (upper) expectation tree); 
the second one \cite[Proposition~21]{TJOENS202130} says that they are also equal for general imprecise probability trees $\probtree{}$, provided that we limit ourselves to finitary gambles.
Our main results extend this equality for general imprecise probability trees in two ways: 
to variables that are monotone limits of finitary gambles and to bounded below $\mathscr{F}$-measurable variables.
In the current section, we work towards establishing the first extension.
Our approach is straightforward;
we will prove that $\mupprev{\probtree{}}$ and $\upprev{}_{\smash{\overline{\mathrm{Q}}}}$ are both continuous with respect to monotone sequences of finitary gambles.
Since they coincide on finitary gambles, this directly implies the desired equality.

We start by showing that $\mupprev{\probtree{}}$ and $\upprev{}_{\smash{\overline{\mathrm{Q}}}}$ are both continuous with respect to non-decreasing sequences in $\bextvariables{}$---and hence definitely in $\fingambles{}$.
\begin{proposition}\label{prop: upward continuity measure-theoretic}
For any\/ $\probtree{}$ and\/ $\smash{\overline{\mathrm{Q}}}$, any $s\in\situations{}$ and any non-decreasing sequence $(f_n)_{n\in\nats{}}$ in $\bextvariables{}$, we have that \vspace*{-2pt}
\begin{equation*}
\lim_{n\to+\infty}\mupprev{\probtree{}}(f_n \vert s) = \mupprev{\probtree{}}(f \vert s) \text{, with } f = \sup_{n\in\nats{}} f_n = \lim_{n\to+\infty} f_n,\vspace*{-2pt}
\end{equation*}
and similarly for\/ $\upprev{}_{\smash{\overline{\mathrm{Q}}}}$. 
\end{proposition}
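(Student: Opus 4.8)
The plan is to prove the two inequalities separately. The direction $\lim_{n}\upprev{}(f_n\vert s)\le\upprev{}(f\vert s)$ is immediate for both operators: since $f_n\le f$ for every $n$, monotonicity (Axiom~\ref{P monotonicity} for $\upprev{}_{\smash{\overline{\mathrm{Q}}}}$, and the analogous property that $\mupprev{\probtree{}}$ inherits from the monotonicity of each upper integral $\mupprev{p}$ and of the upper envelope) shows that $n\mapsto\upprev{}(f_n\vert s)$ is non-decreasing and bounded above by $\upprev{}(f\vert s)$; hence the limit exists in $\extreals{}$ and does not exceed $\upprev{}(f\vert s)$. Everything therefore reduces to the reverse inequality, which I would establish by two genuinely different arguments.

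For $\mupprev{\probtree{}}$ I would first prove a monotone convergence theorem for each fixed precise tree $p\sim\probtree{}$, i.e.\ that $\mupprev{p}(\cdot\vert s)$ is continuous along non-decreasing sequences bounded below. Let $B$ be a uniform lower bound of $(f_n)$ and set $L\coloneqq\lim_{n}\mupprev{p}(f_n\vert s)$; the claim is trivial when $L=+\infty$, so assume $L<+\infty$. Using the defining infimum in Equation~\eqref{Eq: upper integral}, pick for each $n$ a variable $g_n\in\overline{\mathbb{V}}_{\sigma,\mathrm{b}}$ with $g_n\ge f_n$ and $\mprev{p}(g_n\vert s)\le\mupprev{p}(f_n\vert s)+2^{-n}$; note $g_n\ge f_n\ge B$ automatically. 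The key step is to repair the lack of monotonicity of $(g_n)$ by setting $\bar g_n\coloneqq\inf_{k\ge n}g_k$: since $g_k\ge f_k\ge f_n$ for $k\ge n$, each $\bar g_n$ is an $\mathscr{F}$-measurable majorant of $f_n$ bounded below by $B$, the sequence $(\bar g_n)$ is non-decreasing, and $\bar g_n\le g_n$. Hence $\bar g\coloneqq\sup_n\bar g_n\in\overline{\mathbb{V}}_{\sigma,\mathrm{b}}$ dominates $f=\sup_n f_n$, and the classical monotone convergence theorem for the Lebesgue integral gives $\mprev{p}(\bar g\vert s)=\lim_n\mprev{p}(\bar g_n\vert s)\le\lim_n\bigl(\mupprev{p}(f_n\vert s)+2^{-n}\bigr)=L$. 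As $\bar g$ is an admissible majorant, $\mupprev{p}(f\vert s)\le\mprev{p}(\bar g\vert s)\le L$. Passing from a single $p$ to the envelope is then painless: writing $a_{p,n}\coloneqq\mupprev{p}(f_n\vert s)$, this quantity is non-decreasing in $n$, so $\mupprev{\probtree{}}(f\vert s)=\sup_{p}\sup_{n}a_{p,n}=\sup_{n}\sup_{p}a_{p,n}=\lim_{n}\mupprev{\probtree{}}(f_n\vert s)$, the middle equality being the elementary commutation of two suprema.

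For $\upprev{}_{\smash{\overline{\mathrm{Q}}}}=\upprevvovk{\smash{\overline{\mathrm{Q}}}}$ I would first reduce to bounded gambles. Truncating, $f_n^{\wedge c}\in\gambles{}$ and $f_n^{\wedge c}\uparrow_n f^{\wedge c}$ with bounded limit, so once continuity from below is known on $\gambles{}$ the cut property~\ref{global upper cuts}, together with the same commutation of the two monotone limits $n\to+\infty$ and $c\to+\infty$ used above, yields $\lim_n\upprevvovk{\smash{\overline{\mathrm{Q}}}}(f_n\vert s)=\upprevvovk{\smash{\overline{\mathrm{Q}}}}(f\vert s)$ on all of $\bextvariables{}$. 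The real work is therefore to show, for non-decreasing $(h_n)$ in $\gambles{}$ with bounded limit $h$, that $\lim_n\upprevvovk{\smash{\overline{\mathrm{Q}}}}(h_n\vert s)\ge\upprevvovk{\smash{\overline{\mathrm{Q}}}}(h\vert s)$. Here I would argue game-theoretically: given, for each $n$, a bounded-below supermartingale $\martingale{}^n\in\setofsupmartb{}(\smash{\overline{\mathrm{Q}}})$ with $\martingale{}^n(s)\le\upprevvovk{\smash{\overline{\mathrm{Q}}}}(h_n\vert s)+2^{-n}$ and $\liminf\martingale{}^n(\omega)\ge h_n(\omega)$ on $\Gamma(s)$, I would assemble a single bounded-below supermartingale $\martingale{}$ superhedging $h$ from initial capital at most $\lim_n\upprevvovk{\smash{\overline{\mathrm{Q}}}}(h_n\vert s)+\varepsilon$. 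Because the local models $\lupprev{s}$ are only sublinear, this combination must be conical and cannot use differences; the natural device is a \emph{switching} strategy that runs $\martingale{}^1$ and, along a sequence of stopping times, successively re-invests the available capital into $\martingale{}^{2},\martingale{}^{3},\dots$, with the monotonicity $h_n\le h_{n+1}$ and the summable budget $\sum_n 2^{-n}$ being exactly what bounds the switching costs and guarantees that on every path one eventually dominates $h_n$ for arbitrarily large $n$, hence $h$.

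The single genuine obstacle is this last supermartingale construction: it is the game-theoretic analogue of monotone convergence and is where the infinite-horizon subtleties live, in contrast to the measure-theoretic side where the classical monotone convergence theorem does the heavy lifting. (If a continuity-from-below statement for $\upprevvovk{\smash{\overline{\mathrm{Q}}}}$ on $\gambles{}$ is already available in the game-theoretic literature, it can be invoked directly, reducing this part to the routine bookkeeping of the previous paragraph.) Beyond that, both halves rely only on the trivial but repeatedly used fact that nested monotone suprema may be interchanged, which is what lets the per-$p$ and per-$c$ results lift to the envelope and to all of $\bextvariables{}$.
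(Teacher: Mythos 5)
Your handling of the measure-theoretic model is correct and takes a genuinely different route from the paper's. The paper never manipulates the upper integral \eqref{Eq: upper integral} directly: it transfers continuity from the game-theoretic side, using \cite[Theorem~14]{TJOENS202130} (for a precise tree $p$, the upper integral $\mupprev{p}$ coincides with the game-theoretic upper expectation of the agreeing expectation tree) together with \cite[Theorem~9(i)]{TJOENS202130} (upward continuity of the latter), so that each $\mupprev{p}$ inherits monotone convergence; it then performs the same interchange of $\sup_{p}$ and $\lim_{n}$ that you do. You instead prove monotone convergence of each $\mupprev{p}$ from first principles: pick near-optimal measurable majorants $g_n\geq f_n$, regularize them by taking tail infima $\bar g_n=\inf_{k\geq n}g_k$ (which remain measurable, bounded below, above $f_n$ and below $g_n$), and invoke the classical Lebesgue monotone convergence theorem. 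That argument is sound, and it buys self-containedness: your proof of this half needs nothing beyond classical measure theory, whereas the paper's rests on two nontrivial external results about game-theoretic upper expectations. The envelope step (the sup--sup interchange) is the same in both.

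On the game-theoretic side, your fallback is exactly what the paper does: it simply cites \cite[Theorem~9(i)]{TJOENS202130}, which gives upward continuity of $\upprev{}_{\smash{\overline{\mathrm{Q}}}}$ directly on $\bextvariables{}$, so even your (correct) truncation reduction via~\ref{global upper cuts} is unnecessary. You should be aware, however, that your primary route---the switching supermartingale---would not work as sketched. The guarantee provided by $\martingale{}^n$ is only that $\liminf\martingale{}^n(\omega)\geq h_n(\omega)$ on $\Gamma(s)$; this is an asymptotic, infinite-horizon promise, so at any finite time at which you abandon $\martingale{}^n$, its current capital may still be near its lower bound rather than near $h_n$. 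A strategy that follows each $\martingale{}^n$ only during a finite window therefore inherits none of the individual superhedging guarantees, and neither the monotonicity of $(h_n)$ nor the summable budget repairs this. Moreover, the switching times themselves are ill-defined: $h_n$ is a general gamble in $\gambles{}$, not a finitary one, so ``the first time the capital covers $h_n$'' is not determined by a finite history; and even if it were, restarting a hedge for $h_{n+1}$ in a situation $t$ requires capital of order $\upprevvovk{\smash{\overline{\mathrm{Q}}}}(h_{n+1}\vert t)$, which your accumulated capital need not cover. The arguments that actually establish this continuity property (and underlie \cite[Theorem~9(i)]{TJOENS202130}) use countable conical combinations of supermartingales, most naturally in the extended real-valued supermartingale formulation, combined with the equivalence of that formulation with the bounded-below real-valued one. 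So: keep your measure-theoretic half, and for the game-theoretic half invoke the literature, as the paper does, rather than the switching construction.
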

\begin{proof}
That the statement holds for $\upprev{}_{\smash{\overline{\mathrm{Q}}}}$ follows immediately from \cite[Theorem~9(i)]{TJOENS202130}.
To prove the statement for $\mupprev{\probtree{}}$, recall \cite[Theorem~14]{TJOENS202130}, which says that, for any precise probability tree $p$ and the agreeing (upper) expectation tree $\smash{\overline{\mathrm{Q}}_p}$, we have that $\smash{\upprev[p](g\vert s) = \upprev[\smash{\overline{\mathrm{Q}}_p}](g\vert s)}$ for all $g\in\extvariables{}$.
Then, since $\smash{\upprev[\smash{\overline{\mathrm{Q}}_p}]}$ is continuous with respect to non-decreasing sequences in $\bextvariables{}$ \cite[Theorem 9(i)]{TJOENS202130}, we have, for any precise probability tree $p$, that 
$\lim_{n\to+\infty}\upprev[p](f_n \vert s) = \upprev[p](f \vert s)$.
Hence, it follows that 
\begin{equation*}
\sup_{p\sim\probtree{}}\lim_{n\to+\infty}\upprev[p](f_n \vert s) = \sup_{p\sim\probtree{}} \upprev[p](f \vert s) = \mupprev{\probtree{}}(f \vert s).
\end{equation*}
On the other hand, we also have that
\begin{align*}
\sup_{p\sim\probtree{}}\lim_{n\to+\infty}\upprev[p](f_n \vert s) 
&\leq \lim_{n\to+\infty}\sup_{p\sim\probtree{}}\upprev[p](f_n \vert s) \\
&= \lim_{n\to+\infty}\mupprev{\probtree{}}(f_n \vert s)
,
\end{align*}
where the two last limits exist because $(f_n)_{n\in\nats}$ is non-decreasing and $\mupprev{p}$---and therefore also $\mupprev{\probtree{}}$---is monotone\iftoggle{arxiv}{; see e.g. Lemma~\ref{lemma: global properties} in Appendix~\ref{Sect: Appendix B}}{; see e.g. \cite[Lemma~18]{TJoens2021Equivalence}}.
So we obtain that $\smash{\mupprev{\probtree{}}(f \vert s) \leq \lim_{n\to+\infty}\mupprev{\probtree{}}(f_n \vert s)}$. 
The converse inequality follows from the fact that $f_n \leq f$ for all $n\in\nats{}$ and the monotonicity of $\mupprev{\probtree{}}$.
\end{proof}


\noindent
Next, we prove that $\mupprev{\probtree{}}$ is also continuous with respect to non-increasing sequences in $\fingambles{}$---that $\upprev{}_{\smash{\overline{\mathrm{Q}}}}$ satisfies this type of continuity was already established in \cite[Theorem~9(ii)]{TJOENS202130}.
The proof is less straightforward, though, and first requires us to establish the following two topological lemmas concerning probability trees. 
We will say that a sequence $(p_i)_{i\in\nats}$ of precise probability trees converges if there is some limit tree $p$ such that, for each $s\in\situations{}$, the mass functions $(p_i(\cdot\vert s))_{i\in\nats{}}$ converge (pointwise) to the mass function $p(\cdot \vert s)$.

\begin{lemma}\label{lemma: convergent subsequence of probability trees}
Consider any imprecise probability tree $\probtree{}$.
Then any sequence $(p_i)_{i\in\nats{}}$ of precise probability trees that are compatible with $\probtree{}$ has a converging subsequence whose limit is compatible with $\probtree{}$.
\end{lemma}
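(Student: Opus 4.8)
The plan is to exploit two facts: that the state space $\statespace{}$ is finite---so that the set of all mass functions on $\statespace{}$ is a compact subset of a finite-dimensional Euclidean space---and that the set of situations $\situations{}$ is countable. Since each local credal set $\probtree_{s}$ is by assumption closed under the topology of pointwise convergence, and since it sits inside the (compact) probability simplex on $\statespace{}$, it is itself compact. A precise probability tree compatible with $\probtree{}$ is nothing but a choice, for every $s\in\situations{}$, of an element of $\probtree_{s}$; in other words, the compatible trees are exactly the points of the product $\prod_{s\in\situations{}}\probtree_{s}$, equipped with the topology of coordinatewise (pointwise) convergence. Convergence of a sequence of trees, as defined just above the statement, is precisely convergence in this product topology.

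First I would fix a countable enumeration $s_1, s_2, s_3, \dots$ of $\situations{}$, which is possible because $\situations{} = \cup_{i\in\natz{}}\statespace{}^i$ is a countable union of finite sets. Then I would run a standard diagonal (Cantor) extraction. Starting from the given sequence $(p_i)_{i\in\nats{}}$, the mass functions $(p_i(\cdot\vert s_1))_{i\in\nats{}}$ all lie in the compact set $\probtree_{s_1}$, so there is a subsequence along which they converge to some limit in $\probtree_{s_1}$. Passing to this subsequence and repeating the argument at $s_2$, then $s_3$, and so on, yields a nested chain of subsequences, the $k$-th of which converges at $s_1, \dots, s_k$. Taking the diagonal subsequence---whose $k$-th term is the $k$-th term of the $k$-th nested subsequence---produces a single subsequence that is eventually a subsequence of each chain and hence converges at every situation $s_k$.

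For this diagonal subsequence I would define the limit tree $p$ by letting $p(\cdot\vert s)$ be the pointwise limit of the corresponding mass functions at each $s\in\situations{}$. By construction each such limit exists and, because $\probtree_{s}$ is closed, lies in $\probtree_{s}$; hence $p$ is a genuine precise probability tree (each $p(\cdot\vert s)$ is still a mass function, being a pointwise limit of mass functions on the finite set $\statespace{}$) and $p\sim\probtree{}$. This is exactly the desired convergent subsequence with compatible limit.

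I do not expect any serious obstacle here; the only point that requires care is the bookkeeping of the diagonal argument, which is needed precisely because $\situations{}$ is infinite and one cannot extract a single subsequence that works at all situations simultaneously without it. An alternative, even shorter route would be to note that $\prod_{s\in\situations{}}\probtree_{s}$ is a countable product of compact metric spaces, hence itself compact and metrizable, and therefore sequentially compact; the convergent subsequence and its compatible limit then follow immediately. I would likely present the diagonal argument explicitly, as it is self-contained and avoids invoking metrizability of infinite products.
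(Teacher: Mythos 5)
Your proof is correct, and it reaches the conclusion by a more elementary route than the paper. The setup is identical: you identify the compatible precise trees with the points of the product $\prod_{s\in\situations{}}\probtree_{s}$ and note that convergence of trees, as defined in the paper, is exactly coordinatewise convergence in this product. Where you diverge is in how sequential compactness is obtained. The paper argues topologically: each $\probtree_{s}$ is compact (a closed subset of the compact space $\mathbb{P}_{\statespace{}}$ of mass functions), so by Tychonoff's theorem $\probtree{}$ is a compact subset of $\prod_{s\in\situations{}}\mathbb{P}_{\statespace{}}$; since $\situations{}$ is countable and $\mathbb{P}_{\statespace{}}$ is metrizable, the product is metrizable, and compact metrizable spaces are sequentially compact. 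You instead carry out the Cantor diagonal extraction explicitly: enumerate the countably many situations, extract nested subsequences converging at $s_1,\dots,s_k$ (using Bolzano--Weierstrass in the finite-dimensional simplex), pass to the diagonal subsequence, and use closedness of each $\probtree_{s}$ to place the limit tree back inside $\probtree{}$. These are two presentations of the same underlying fact---your diagonalisation is precisely the standard proof of the sequential compactness statement that the paper cites wholesale. Your version buys self-containedness: it needs nothing beyond Bolzano--Weierstrass and closedness of the credal sets, and avoids invoking Tychonoff and the metrizability of countable products. The paper's version buys brevity by delegating the bookkeeping to standard references in general topology. Your closing remark about the ``shorter alternative route'' via compactness and metrizability of the product is, in fact, word for word the paper's own proof.
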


\begin{lemma}\label{lemma: convergence of probability measures implies convergence on n-measurables}
Consider any sequence $(p_i)_{i\in\nats}$ of precise probability trees that converges to some limit tree $p_{}$.
Then, for any $g\in\fingambles{}$ and any $s\in\situations{}$,
$\lim_{i\to+\infty}\mprev{p_i}(g \vert s)=\mprev{p_{}}(g \vert s)$.
\end{lemma}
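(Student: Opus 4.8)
The plan is to reduce the measure-theoretic expectation of a finitary gamble to an explicit finite sum of finite products of local mass-function values, and then invoke continuity of finite arithmetic. First I would fix $g\in\fingambles{}$ and $s=x_{1:k}\in\situations{}$. Since $g$ is finitary, there is an $n\in\nats{}$ and an $h\in\gengambles{}(\statespace{}^n)$ with $g=h(X_{1:n})$; replacing $n$ by $\max\{n,k\}$ and extending $h$ so that it ignores the extra coordinates, I may assume $n\geq k$. Because the cylinders $\Gamma(z_{1:n})$, with $z_{1:n}\in\statespace{}^n$, partition $\samplespace{}$, the gamble $g$ is the simple $\mathscr{F}$-measurable function $\sum_{z_{1:n}\in\statespace{}^n}h(z_{1:n})\indica{\Gamma(z_{1:n})}$, so its Lebesgue integral collapses to a finite sum:
\begin{equation*}
\mprev{p}(g\vert s)=\sum_{z_{1:n}\in\statespace{}^n}h(z_{1:n})\,\prob_{p}(z_{1:n}\vert x_{1:k}),
\end{equation*}
and likewise for every $p_i$ in place of $p$.

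Next I would make the coefficients explicit. By Equation~\eqref{Eq: precise probability on algebra}, each weight $\prob_{p}(z_{1:n}\vert x_{1:k})$ vanishes unless $z_{1:k}=x_{1:k}$; when $z_{1:k}=x_{1:k}$ it equals $1$ if $n=k$, and the finite product $\prod_{i=k}^{n-1}p(z_{i+1}\vert z_{1:i})$ if $n>k$. In all cases, $\mprev{p}(g\vert s)$ is therefore a fixed finite sum of finite products of the local probabilities $p(z_{i+1}\vert z_{1:i})$ with $k\leq i\leq n-1$; that is, a polynomial in finitely many of the tree's local mass-function values, whose coefficients are determined by $h$ and are independent of the tree.

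Finally, convergence follows from continuity of finite arithmetic. By the definition of convergence of probability trees, $p_i(\cdot\vert t)\to p(\cdot\vert t)$ pointwise for every situation $t$, in particular for the finitely many situations $t=z_{1:i}$, $k\leq i\leq n-1$, appearing in the products above. Since only finitely many additions and multiplications are involved and these operations are continuous, the polynomial expression for $\mprev{p_i}(g\vert s)$ converges to that for $\mprev{p}(g\vert s)$, which is exactly the claim. I expect no serious obstacle: the finiteness of $\statespace{}$ and of $n$ confines the entire argument to continuity of finite arithmetic. The only points requiring a little care are the reduction of the Lebesgue integral of a finitary gamble to the finite sum (a routine simple-function argument) and the degenerate case $n=k$, where the sum reduces to the tree-independent value $g(x_{1:k})$ and convergence is immediate.
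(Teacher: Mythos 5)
Your proof is correct and takes essentially the same route as the paper's own: both reduce the Lebesgue integral of the finitary gamble to a finite weighted sum of the cylinder probabilities $\prob_{p}(z_{1:n}\vert x_{1:k})$, expand these via Equation~\eqref{Eq: precise probability on algebra} into finite products of local mass-function values, and conclude by pointwise convergence of the local mass functions together with continuity of finite sums and products. Your normalisation $n\geq k$ is a minor streamlining of the paper's explicit case analysis ($k\geq\ell$ versus $k<\ell$), not a substantive difference.
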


Combined, the two lemmas above suffice to prove the continuity of $\mupprev{\probtree{}}$ with respect to non-increasing sequences in $\fingambles{}$.

\begin{proposition}\label{Prop: decreasing continuity measure-theoretic}
For any\/ $\probtree{}$ and\/ $\smash{\overline{\mathrm{Q}}}$, any $s\in\situations{}$ and any non-increasing sequence $(f_n)_{n\in\nats{}}$ of finitary gambles, we have that \vspace*{-2pt}
\begin{equation*}
\lim_{n\to+\infty}\mupprev{\probtree{}}(f_n \vert s) = \mupprev{\probtree{}}(f \vert s) \text{, with } f =\inf_{n\in\nats{}} f_n = \lim_{n\to+\infty} f_n,\vspace*{-2pt}
\end{equation*}
and similarly for\/ $\upprev{}_{\smash{\overline{\mathrm{Q}}}}$.
\end{proposition}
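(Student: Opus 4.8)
The plan is to prove the equality for $\mupprev{\probtree{}}$, since the corresponding statement for $\upprev{}_{\smash{\overline{\mathrm{Q}}}}$ is already supplied by \cite[Theorem~9(ii)]{TJOENS202130}. Write $L\coloneqq\lim_{n\to+\infty}\mupprev{\probtree{}}(f_n\vert s)$; this limit exists because monotonicity of $\mupprev{\probtree{}}$ makes the sequence $(\mupprev{\probtree{}}(f_n\vert s))_{n\in\nats{}}$ non-increasing. The inequality $\mupprev{\probtree{}}(f\vert s)\leq L$ is immediate: $f\leq f_n$ for every $n$, so monotonicity gives $\mupprev{\probtree{}}(f\vert s)\leq\mupprev{\probtree{}}(f_n\vert s)$, and we pass to the limit. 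The real work lies in the converse inequality $L\leq\mupprev{\probtree{}}(f\vert s)$, and here I would first dispose of the trivial case $L=-\infty$; so assume henceforth that $L\in\reals{}$ (it cannot be $+\infty$, being bounded above by the finite number $\mupprev{\probtree{}}(f_1\vert s)$).

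For the converse inequality, the obstacle is that $\mupprev{\probtree{}}(f_n\vert s)=\sup_{p\sim\probtree{}}\mprev{p}(f_n\vert s)$ is an envelope over precise trees, whose supremum need not be attained by a single tree that works simultaneously for all $n$; in other words, I must interchange a supremum over $p$ with a limit in $n$ that run in opposite directions. This is exactly what the two topological lemmas are for. Concretely, for each $n$ I would select a compatible tree $p_n\sim\probtree{}$ that is near-optimal, say $\mprev{p_n}(f_n\vert s)\geq\mupprev{\probtree{}}(f_n\vert s)-\nicefrac{1}{n}\geq L-\nicefrac{1}{n}$, where I use that $\mupprev{\probtree{}}(f_n\vert s)\geq L$ for every $n$ and that $\mprev{p}(f_n\vert s)=\mupprev{p}(f_n\vert s)$ because $f_n$ is a bounded $\mathscr{F}$-measurable variable \cite[Proposition~12]{TJOENS202130}. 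By Lemma~\ref{lemma: convergent subsequence of probability trees}, the sequence $(p_n)_{n\in\nats{}}$ has a subsequence $(p_{n_k})_{k\in\nats{}}$ converging to some limit tree $p^{*}\sim\probtree{}$.

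The key step is then to extract, from this single limit tree $p^{*}$, a lower bound $L$ on $\mprev{p^{*}}(f_m\vert s)$ for every fixed $m$. Fix $m\in\nats{}$. For all $k$ large enough that $n_k\geq m$, the non-increasing character of the sequence gives $f_{n_k}\leq f_m$, whence $\mprev{p_{n_k}}(f_m\vert s)\geq\mprev{p_{n_k}}(f_{n_k}\vert s)\geq L-\nicefrac{1}{n_k}$. Letting $k\to+\infty$ and invoking Lemma~\ref{lemma: convergence of probability measures implies convergence on n-measurables} (applicable since $f_m\in\fingambles{}$ and $p_{n_k}\to p^{*}$), the left-hand side converges to $\mprev{p^{*}}(f_m\vert s)$ while the right-hand side tends to $L$, so $\mprev{p^{*}}(f_m\vert s)\geq L$. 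As this holds for every $m$, a downward application of the monotone convergence theorem for the probability measure $\prob_{p^{*}}(\cdot\vert s)$---valid because $f_m\downarrow f$ with the integrable majorant $f_1$---yields $\mprev{p^{*}}(f\vert s)=\lim_{m\to+\infty}\mprev{p^{*}}(f_m\vert s)\geq L$. Since $p^{*}\sim\probtree{}$ and $f$ is bounded above and $\mathscr{F}$-measurable (so $\mupprev{p^{*}}(f\vert s)=\mprev{p^{*}}(f\vert s)$), we conclude $\mupprev{\probtree{}}(f\vert s)\geq\mupprev{p^{*}}(f\vert s)=\mprev{p^{*}}(f\vert s)\geq L$, which is the desired converse inequality.

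The main difficulty of the whole argument is precisely the non-commutation of $\sup_p$ and $\lim_n$: the near-optimal trees $p_n$ for different $n$ need not converge, and a single witnessing tree is not available a priori. The compactness provided by Lemma~\ref{lemma: convergent subsequence of probability trees}, together with the monotonicity trick $f_{n_k}\leq f_m$ that transfers near-optimality at level $n_k$ to each fixed test gamble $f_m$, is exactly what resolves this; Lemma~\ref{lemma: convergence of probability measures implies convergence on n-measurables} then licenses passing to the limit $p_{n_k}\to p^{*}$ inside the expectations, and the classical monotone convergence theorem handles the final passage $m\to+\infty$.
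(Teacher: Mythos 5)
Your proof is correct and follows essentially the same route as the paper's: select near-optimal compatible trees $p_n$ for each $f_n$, extract a convergent subsequence via Lemma~\ref{lemma: convergent subsequence of probability trees}, transfer estimates to the limit tree $p^{*}$ via Lemma~\ref{lemma: convergence of probability measures implies convergence on n-measurables} and the monotonicity trick, and finish with downward monotone convergence for $\mprev{p^{*}}$. The only difference is cosmetic bookkeeping---you pass the limit over trees first and over $m$ second, whereas the paper fixes $a>\mprev{p^{*}}(f\vert s)$ and runs the quantifiers in the opposite order with an explicit $\epsilon$---so the two arguments are the same in substance.
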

\begin{proof}
The statement for $\upprev{}_{\smash{\overline{\mathrm{Q}}}}$ follows from \cite[Theorem~9(ii)]{TJOENS202130}.
To prove the statement for $\mupprev{\probtree{}}$ first note that, since $(f_n)_{n\in\nats{}}$ is non-increasing and all $f_n$ are gambles, the variable $f$ is bounded above.
Moreover, $f$ is $\mathscr{F}$-measurable because it is a pointwise limit of finitary---and therefore certainly $\mathscr{F}$-measurable---gambles \cite[Theorem II.4.2]{shiryaev2016probabilityPartI}.
Taking both facts into account, we deduce that, for any $p\sim\probtree{}$, the expectation $\mprev{p}(f\vert s)$ exists and hence, because $\mupprev{p}$ is an extension of $\mprev{p}$ (see Section~\ref{Sect: Measure-Theoretic Global Upper Expectations}), that $\mupprev{p}(f\vert s) = \mprev{p}(f\vert s)$.
Since this obviously also holds for each $f_n$---because they are finitary and bounded---the desired statement follows if we manage to show that\vspace*{-2pt}
\begin{equation*}
\lim_{n\to+\infty} \sup \Bigl\{\mprev{p}(f_n \vert s) \colon p\sim\probtree \Bigr\} = \sup \Bigl\{\mprev{p}( f \vert s) \colon p\sim\probtree \Bigr\}.\vspace*{-2pt}
\end{equation*}
The `$\geq$'-inequality  follows immediately from the fact that $f_m \geq \inf_{n\in\nats{}}f_n = f$ for all $m\in\nats{}$ and the monotonicity of $\mprev{p}$. 
It remains to prove the converse inequality.

Fix any $\epsilon>0$ and let $(p_i)_{i\in\nats{}}$ be a sequence of precise probability trees such that $p_i \sim\probtree$ and\vspace*{-2pt}
\begin{equation*}
\mprev{p_i}(f_i \vert s) + \epsilon \geq \sup \bigl\{\mprev{p}( f_i \vert s) \colon p\sim\probtree \bigr\} \text{ for all } i\in\nats{}.\vspace*{-2pt}
\end{equation*}
Note that this is indeed possible because, for all $i\in\nats{}$, $\sup \bigl\{\mprev{p}( f_i \vert s) \colon p\sim\probtree \bigr\}\leq \sup f_i$ and, since $f_i$ is a gamble, $\sup f_i \in\reals{}$.
Then Lemma~\ref{lemma: convergent subsequence of probability trees} guarantees that $(p_{i})_{i\in\nats{}}$ has a convergent subsequence $(p_{i(k)})_{k\in\nats{}}$ whose limit $p^\ast$ is compatible with $\probtree{}$.
Since $\mprev{p^\ast}$ satisfies continuity with respect to non-increasing sequences \cite[Property~M9]{TJOENS202130} (the required conditions are obviously satisfied because $f_n$ is finitary and $f_n \leq f_1 \leq \sup f_1 \in\reals{}$ for all $n\in\nats{}$), there is, for any real $a>\mprev{p^\ast}(f \vert s)$, some $n^\ast\in\nats$ such that
$a \geq \mprev{p^\ast}(f_{n^\ast} \vert s)$.
Furthermore, since $f_{n^\ast}$ is finitary, and since $\mprev{p^\ast}(f_{n^\ast} \vert s)\in\reals{}$ because $f_{n^\ast}$ is a gamble, Lemma~\ref{lemma: convergence of probability measures implies convergence on n-measurables} implies that there is some $k^\ast\in\nats$ such that 
$\smash{\mprev{p_{i(k)}}(f_{n^\ast} \vert s)-\epsilon\leq \mprev{p^\ast}(f_{n^\ast} \vert s)} \text{ for all } k\geq k^\ast$.
We therefore get that
\begin{equation}\label{Eq: Prop: decreasing continuity measure-theoretic 2}
a \geq \mprev{p_{i(k)}}(f_{n^\ast} \vert s) - \epsilon \text{ for all } k\geq k^\ast.
\end{equation}
Now consider any $k\geq k^\ast$ such that $i(k)\geq n^\ast$, which is possible because $(i(k))_{k\in\nats}$ is increasing.
Then, since $(f_n)_{n\in\nats}$ is non-increasing, and $\mprev{p_{i(k)}}$ is monotone, Equation~\eqref{Eq: Prop: decreasing continuity measure-theoretic 2} implies that 
$a \geq \mprev{p_{i(k)}}(f_{i(k)} \vert s) - \epsilon$.
Since the tree $p_{i(k)}$ was chosen in such a way that 
$\smash{\mprev{p_{i(k)}}(f_{i(k)} \vert s) +\epsilon \geq \sup \bigl\{\mprev{p}( f_{i(k)} \vert s) \colon p\sim\probtree \bigr\}}$, this implies that
$a \geq \sup \bigl\{\mprev{p}( f_{i(k)} \vert s) \colon p\sim\probtree \bigr\} - 2\epsilon$.
Because this holds for any $k\geq k^\ast$ such that $i(k)\geq n$, we find that
\begin{align*}
a 
&\geq \lim_{k\to+\infty} \sup \bigl\{\mprev{p}( f_{i(k)} \vert s) \colon p\sim\probtree \bigr\} - 2\epsilon \\
&= \lim_{n\to+\infty} \sup \bigl\{\mprev{p}( f_{n} \vert s) \colon p\sim\probtree \bigr\} - 2\epsilon,
\end{align*}
where the equality follows from the fact that $(f_n)_{n\in\nats{}}$ is non-increasing and the monotonicity of $\mprev{p}$.
Since this holds for any real $a>\mprev{p^\ast}(f \vert s)$, it follows that
$\smash{\mprev{p^\ast}(f \vert s)
\geq \lim_{n\to+\infty} \sup \bigl\{\mprev{p}( f_{n} \vert s) \colon p\sim\probtree \bigr\}}-2\epsilon$.
Finally, it suffices to recall that $p^\ast \sim\probtree{}$, to see that
\begin{equation*}
\sup \Bigl\{\mprev{p}( f \vert s) \colon p\sim\probtree \Bigr\} 
\geq \lim_{n\to+\infty} \sup \bigl\{\mprev{p}( f_{n} \vert s) \colon p\sim\probtree \bigr\} -2\epsilon,
\end{equation*} 
which, since $\epsilon>0$ was arbitrary, concludes the proof.
\end{proof}

It now remains to combine the two types of continuity with the fact that $\mupprev{\probtree{}}$ and $\upprev{}_{\smash{\overline{\mathrm{Q}}}}$ coincide on $\fingambles{}\times\situations{}$ \cite[Proposition~21]{TJOENS202130} to arrive at our first main result.

\begin{theorem}\label{theorem: equivalence for non-decreasing limits of n-measurables}
Consider any $\probtree{}$ and $\smash{\overline{\mathrm{Q}}}$ that agree, any $s\in\situations{}$ and any $f\in\extvariables{}$ that is the pointwise limit of a non-decreasing or non-increasing sequence of finitary gambles.
Then we have that\/ 
$\mupprev{\probtree{}}(f\vert s) = \upprev{}_{\smash{\overline{\mathrm{Q}}}}(f\vert s)$.
\end{theorem}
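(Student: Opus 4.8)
The plan is to simply assemble the two continuity properties just established with the already-known equality on finitary gambles; all the genuine work has in fact been done in the preceding results. Propositions~\ref{prop: upward continuity measure-theoretic} and~\ref{Prop: decreasing continuity measure-theoretic} guarantee that both $\mupprev{\probtree{}}$ and $\upprev{}_{\smash{\overline{\mathrm{Q}}}}$ are continuous along non-decreasing and non-increasing sequences of finitary gambles, respectively, and \cite[Proposition~21]{TJOENS202130} guarantees that the two operators already coincide on every finitary gamble. So the remaining task is purely one of passing to the limit.

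First I would split the argument according to whether the approximating sequence $(f_n)_{n\in\nats{}}$ of finitary gambles is non-decreasing or non-increasing. In the non-decreasing case I would invoke Proposition~\ref{prop: upward continuity measure-theoretic}, after observing that finitary gambles are bounded and hence lie in $\bextvariables{}$ so that its hypotheses are met; this yields both $\mupprev{\probtree{}}(f\vert s)=\lim_{n\to+\infty}\mupprev{\probtree{}}(f_n\vert s)$ and $\upprev{}_{\smash{\overline{\mathrm{Q}}}}(f\vert s)=\lim_{n\to+\infty}\upprev{}_{\smash{\overline{\mathrm{Q}}}}(f_n\vert s)$. In the non-increasing case I would instead appeal to Proposition~\ref{Prop: decreasing continuity measure-theoretic}, which is stated directly for non-increasing sequences of finitary gambles and gives the analogous two limit identities.

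In either case, since each $f_n$ is a finitary gamble, \cite[Proposition~21]{TJOENS202130} gives $\mupprev{\probtree{}}(f_n\vert s)=\upprev{}_{\smash{\overline{\mathrm{Q}}}}(f_n\vert s)$ for every $n\in\nats{}$. Taking limits on both sides of this equality and substituting the continuity identities from the previous paragraph then yields $\mupprev{\probtree{}}(f\vert s)=\upprev{}_{\smash{\overline{\mathrm{Q}}}}(f\vert s)$, which is exactly the claim.

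I do not expect any substantive obstacle at this point: the conceptually demanding steps were the two continuity propositions themselves, and in particular the topological Lemmas~\ref{lemma: convergent subsequence of probability trees} and~\ref{lemma: convergence of probability measures implies convergence on n-measurables} underpinning the non-increasing case. The only point requiring a little care is the bookkeeping—matching the correct continuity proposition to the monotonicity direction of the approximating sequence, and checking that finitary gambles qualify as elements of $\bextvariables{}$ so that Proposition~\ref{prop: upward continuity measure-theoretic} applies verbatim.
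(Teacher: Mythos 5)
Your proposal is correct and follows essentially the same route as the paper's own proof: split into the non-decreasing and non-increasing cases, apply Proposition~\ref{prop: upward continuity measure-theoretic} (noting finitary gambles lie in $\bextvariables{}$) respectively Proposition~\ref{Prop: decreasing continuity measure-theoretic}, and pass the equality from \cite[Proposition~21]{TJOENS202130} through the limit. Nothing is missing.
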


\section{An Equality for $\mathscr{F}$-Measurable Variables}\label{Sect: equivalence for measurable variables} 
\noindent
In order to prove our second main result---that $\mupprev{\probtree{}}$ coincides with $\upprev{}_{\smash{\overline{\mathrm{Q}}}}$ on bounded below $\mathscr{F}$-measurable variables---we require the notions of upper and lower semicontinuity.

Let $\samplespace{}$ be endowed with the topology generated by the cylinder events $\{\Gamma(s)\colon s\in\situations{}\}$.
As we show in \iftoggle{arxiv}{Appendix~\ref{Sect: Appendix B}}{\cite[Appendix A.2]{TJoens2021Equivalence}}, this topology is metrisable and compact, and coincides with the product topology on $\samplespace{}=\statespace{}^\nats{}$.
For any topological space $\posspace{}$---and hence for $\samplespace{}$ in particular---a function $f\colon\posspace{}\to\extreals{}$ is called \emph{upper semicontinuous (u.s.c.)} if $\{y\in\posspace{}\colon f(y) < a\}$ is an open subset of $\posspace{}$ for each $a\in\reals{}$; see \cite[Section~11.C and~23.F]{kechris1995set_theory} or \cite[Section~3.7.K]{willard2004general}.
A function $f\colon\posspace{}\to\extreals{}$ is called \emph{lower semicontinuous (l.s.c.)} if $-f$ is u.s.c. and it is called \emph{continuous} if it is both u.s.c. and l.s.c.
In general, semicontinuous functions can always be written as pointwise limits of monotone sequences of continuous real-valued functions (see e.g. \cite[Theorem~23.19]{kechris1995set_theory}).
In our case, though, where $\mathscr{Y}=\samplespace{}$, a stronger property holds.

\begin{lemma}\label{lemma: semicontinuous functions are monotone limits of n-measurables}
Any $f\in\extvariables$ is u.s.c. (l.s.c.) if and only if it is the pointwise limit of a non-increasing (resp. non-decreasing) sequence $(f_n)_{n\in\nats{}}$ of extended real variables, each of which is finitary and bounded below (resp. bounded above).
Moreover, $f$ is both u.s.c. (l.s.c.) and bounded above (resp. bounded below) if and only if it is the pointwise limit of a non-increasing (resp. non-decreasing) sequence $(f_n)_{n\in\nats{}}$ of finitary gambles.
\end{lemma}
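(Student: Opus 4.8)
The plan is to prove every assertion for the u.s.c.\ case and to deduce the l.s.c.\ statements by applying the u.s.c.\ results to $-f$: indeed $f$ is l.s.c.\ iff $-f$ is u.s.c., a non-decreasing sequence $(f_n)_{n\in\nats{}}$ converges to $f$ iff the non-increasing sequence $(-f_n)_{n\in\nats{}}$ converges to $-f$, and $f_n$ is an $n$-measurable gamble (resp.\ bounded-above variable) iff $-f_n$ is an $n$-measurable gamble (resp.\ bounded-below variable). Throughout I would lean on two elementary features of the topology on $\samplespace{}$. First, for a fixed $\omega\in\samplespace{}$ the cylinder events $\Gamma(\omega^n)$ form a neighbourhood base at $\omega$: finite intersections of cylinders are again cylinders (or empty), so the cylinders form a base, and any basic open set containing $\omega$ is of the form $\Gamma(\omega^n)$. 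Second, every $n$-measurable variable is continuous, hence in particular u.s.c.: it is constant on each cylinder $\Gamma(x_{1:n})$, so each of its level sets is a finite union of such cylinders, which are clopen because $\statespace{}$ is finite.

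For the forward implication of the first statement, suppose $f\in\extvariables{}$ is u.s.c.\ and set
\[
f_n(\omega)\coloneqq\sup\{f(\omega')\colon\omega'\in\Gamma(\omega^n)\}\quad\text{for all }n\in\nats{}.
\]
Each $f_n$ is $n$-measurable, dominates $f$, and is non-increasing in $n$ since $\Gamma(\omega^{n+1})\subseteq\Gamma(\omega^n)$. The crux is to check $\inf_n f_n=f$ pointwise. Fixing $\omega$ and any real $a>f(\omega)$, u.s.c.\ makes $\{f<a\}$ open and it contains $\omega$, so the neighbourhood-base property yields an $n$ with $\Gamma(\omega^n)\subseteq\{f<a\}$, whence $f_n(\omega)\leq a$; letting $a\downarrow f(\omega)$ gives $\inf_n f_n(\omega)\leq f(\omega)$, and the reverse inequality is immediate from $f_n\geq f$. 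To also secure the bounded-below requirement---$f_n$ can equal $-\infty$ when $f\equiv-\infty$ on some $\Gamma(\omega^n)$---I would replace $f_n$ by $g_n\coloneqq\max\{f_n,-n\}$. These $g_n$ are still $n$-measurable, are bounded below by $-n$, remain non-increasing (a pointwise maximum of two non-increasing families is non-increasing), and still decrease to $f$: where $f(\omega)$ is finite the constant $-n$ eventually drops below $f_n(\omega)$, so $g_n(\omega)=f_n(\omega)\to f(\omega)$, while where $f(\omega)=-\infty$ both $f_n(\omega)$ and $-n$ tend to $-\infty$.

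For the converse of the first statement, let $(f_n)_{n\in\nats{}}$ be any non-increasing sequence of $n$-measurable variables with $f=\inf_n f_n$; each $f_n$ is u.s.c.\ by the opening remark, and an infimum of u.s.c.\ functions is u.s.c.\ because $\{\inf_n f_n<a\}=\bigcup_n\{f_n<a\}$ is open, so $f$ is u.s.c.\ (the bounded-below hypothesis is not even needed here). For the ``moreover'' part, if $f$ is u.s.c.\ and bounded above by some $M\in\reals{}$, then the $f_n$---and hence the $g_n$---constructed above are all $\leq M$, so each $g_n$ is bounded (between $-n$ and $M$), i.e.\ an $n$-measurable gamble in $\fingambles{}$; conversely, a limit of a non-increasing sequence of $n$-measurable gambles is u.s.c.\ by the above and bounded above since $f\leq f_1\leq\sup f_1<+\infty$. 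I expect the main obstacle to be the forward limit identity $\inf_n f_n=f$, together with the slightly delicate bookkeeping needed to arrange that the truncation $g_n$ is simultaneously bounded below, non-increasing, and correct in the limit at points where $f=-\infty$; everything else reduces to the two standard topological facts recorded at the outset.
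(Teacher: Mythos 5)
Your proposal is correct and follows essentially the same route as the paper's proof: the same reduction of the l.s.c.\ case via $-f$, the same construction $\sup\{f(\omega')\colon\omega'\in\Gamma(\omega^n)\}$ truncated below by $-n$ (the paper simply builds the $-n$ into the supremum, writing $f_n(\omega)\coloneqq\sup(\{-n\}\cup\{f(\omega')\colon\omega'\in\Gamma(\omega^n)\})$, which is identical to your $g_n=\max\{f_n,-n\}$), the same use of openness of $\{f<a\}$ and the cylinder neighbourhood base for the limit identity, and the same converse argument that $\{f<a\}=\bigcup_n\{f_n<a\}$ is a union of open sets built from cylinders. The only cosmetic slip is the claim that $g_n$ lies ``between $-n$ and $M$'' (when $M<0$ and $n<-M$ the upper bound is $-n$ rather than $M$), but boundedness---which is all that is needed---holds regardless.
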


Lemma~\ref{lemma: semicontinuous functions are monotone limits of n-measurables} leads us to two important intermediate results, the first of which being that $\mupprev{\probtree{}}$ and $\upprev{}_{\smash{\overline{\mathrm{Q}}}}$ coincide on the domain of all u.s.c. variables that are bounded above and all l.s.c. variables that are bounded below.
The result can simply be seen as a restatement of Theorem~\ref{theorem: equivalence for non-decreasing limits of n-measurables}\iftoggle{arxiv}{ and is therefore stated without proof.}{.}

\begin{corollary}\label{corollary: equivalence for upper semicontinuous functions}
For any\/ $\probtree{}$ and\/ $\smash{\overline{\mathrm{Q}}}$ that agree, any $s\in\situations{}$ and any variable $f\in\extvariables{}$ that is u.s.c. and bounded above, or l.s.c. and bounded below, we have that\/ 
$\mupprev{\probtree{}}(f\vert s) = \upprev{}_{\smash{\overline{\mathrm{Q}}}}(f\vert s)$.
\end{corollary}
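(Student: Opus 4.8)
The plan is to combine the structural characterisation of semicontinuous variables in Lemma~\ref{lemma: semicontinuous functions are monotone limits of n-measurables} with the equality already established in Theorem~\ref{theorem: equivalence for non-decreasing limits of n-measurables}. First I would treat the u.s.c.\ case. Suppose $f\in\extvariables{}$ is u.s.c.\ and bounded above. Then the second (``moreover'') part of Lemma~\ref{lemma: semicontinuous functions are monotone limits of n-measurables} guarantees that $f$ is the pointwise limit of a non-increasing sequence $(f_n)_{n\in\nats{}}$ of $n$-measurable gambles---that is, of finitary gambles. This is exactly the hypothesis required by the non-increasing branch of Theorem~\ref{theorem: equivalence for non-decreasing limits of n-measurables}, so I may invoke that theorem directly to conclude that $\mupprev{\probtree{}}(f\vert s) = \upprev{}_{\smash{\overline{\mathrm{Q}}}}(f\vert s)$ for every $s\in\situations{}$.

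The l.s.c.\ case is entirely symmetric. If $f$ is l.s.c.\ and bounded below, the same ``moreover'' statement now furnishes a non-decreasing sequence of finitary gambles converging pointwise to $f$, and Theorem~\ref{theorem: equivalence for non-decreasing limits of n-measurables} (in its non-decreasing branch) again yields the desired equality. Since these two alternatives exhaust the hypothesis of the corollary, this settles the claim.

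There is no substantive obstacle here: the corollary is a direct repackaging of Theorem~\ref{theorem: equivalence for non-decreasing limits of n-measurables} once Lemma~\ref{lemma: semicontinuous functions are monotone limits of n-measurables} is available, which is precisely why the text flags it as a restatement. The only points requiring (minimal) care are to match the monotonicity direction supplied by the lemma---non-increasing for u.s.c.\ and bounded above, non-decreasing for l.s.c.\ and bounded below---to the corresponding branch of the theorem, and to note that an $n$-measurable gamble is the same object as a finitary gamble, so that the sequences produced by the lemma are admissible inputs to the theorem.
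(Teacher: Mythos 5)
Your proposal is correct and matches the paper's intended argument exactly: the paper states the corollary without proof, describing it as a restatement of Theorem~\ref{theorem: equivalence for non-decreasing limits of n-measurables}, which works precisely because the ``moreover'' part of Lemma~\ref{lemma: semicontinuous functions are monotone limits of n-measurables} exhibits each u.s.c.\ bounded above (resp.\ l.s.c.\ bounded below) variable as the pointwise limit of a non-increasing (resp.\ non-decreasing) sequence of finitary gambles. Your attention to matching the monotonicity direction to the correct branch of the theorem is exactly the ``minimal care'' the paper implicitly relies on.
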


On the other hand, Lemma~\ref{lemma: semicontinuous functions are monotone limits of n-measurables} also implies that continuity with respect to non-increasing sequences of (bounded above) u.s.c. variables is actually not stronger than continuity with respect to non-increasing sequences of finitary gambles; see \iftoggle{arxiv}{Lemma~\ref{lemma: upper semicont continuity} in Appendix~\ref{Sect: Appendix B}}{\cite[Lemma~17]{TJoens2021Equivalence}}.
Since both $\mupprev{\probtree{}}$ and $\upprev{}_{\smash{\overline{\mathrm{Q}}}}$ satisfy the latter type of continuity, we immediately obtain the following result.
\begin{proposition}\label{prop: upper semicont continuity}
Consider any $\probtree{}$ and $\smash{\overline{\mathrm{Q}}}$, any $s\in\situations{}$ and any non-increasing sequence $(f_n)_{n\in\nats{}}$ of u.s.c. variables that are bounded above.
Then we have that 
$\lim_{n\to+\infty}\mupprev{\probtree{}}(f_n \vert s) = \mupprev{\probtree{}}(f \vert s)$ for $f=\lim_{n\to+\infty} f_n$, and similarly for $\upprev{}_{\smash{\overline{\mathrm{Q}}}}$.
\end{proposition}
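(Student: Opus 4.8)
The plan is to reduce the claim to Proposition~\ref{Prop: decreasing continuity measure-theoretic}, which already establishes continuity along non-increasing sequences of finitary gambles for both $\mupprev{\probtree{}}$ and $\upprev{}_{\smash{\overline{\mathrm{Q}}}}$. Write $\upprev{}$ for either operator throughout. One inequality is immediate: since $(f_n)_{n\in\nats{}}$ is non-increasing with infimum $f$, monotonicity of $\upprev{}$ gives $\upprev{}(f\vert s)\leq\upprev{}(f_n\vert s)$ for every $n$, while the sequence $(\upprev{}(f_n\vert s))_{n\in\nats{}}$ is itself non-increasing, so $\upprev{}(f\vert s)\leq\lim_{n\to+\infty}\upprev{}(f_n\vert s)$. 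Everything therefore comes down to the converse inequality.

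To obtain it, I would approximate each $f_n$ from above by finitary gambles. Since each $f_n$ is u.s.c. and bounded above, Lemma~\ref{lemma: semicontinuous functions are monotone limits of n-measurables} supplies, for every $n$, a non-increasing sequence $(g_{n,m})_{m\in\nats{}}$ of finitary gambles with $g_{n,m}\downarrow f_n$ as $m\to+\infty$. The key step is a diagonal construction: set $h_k\coloneqq\min_{1\leq n\leq k}g_{n,k}$ for each $k\in\nats{}$. Being a finite minimum of finitary gambles, each $h_k$ is again a finitary gamble, and I would check that $(h_k)_{k\in\nats{}}$ is non-increasing, since enlarging the index set and increasing the second index both only decrease the minimum: $h_{k+1}=\min_{1\leq n\leq k+1}g_{n,k+1}\leq\min_{1\leq n\leq k}g_{n,k+1}\leq\min_{1\leq n\leq k}g_{n,k}=h_k$.

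Next I would identify the limit of $(h_k)_{k\in\nats{}}$ as $f$. On the one hand $g_{n,m}\geq f_n\geq f$ forces $h_k\geq f$ for all $k$; on the other hand, for each fixed $n$ and all $k\geq n$ we have $h_k\leq g_{n,k}$, whence $\limsup_{k}h_k\leq\lim_{k}g_{n,k}=f_n$, and taking the infimum over $n$ yields $\limsup_k h_k\leq f$. Thus $h_k\downarrow f$, and Proposition~\ref{Prop: decreasing continuity measure-theoretic} applies to give $\upprev{}(f\vert s)=\lim_{k\to+\infty}\upprev{}(h_k\vert s)$. Finally, since $g_{n,k}\geq f_n$ and $(f_n)_{n\in\nats{}}$ is non-increasing, $h_k=\min_{1\leq n\leq k}g_{n,k}\geq\min_{1\leq n\leq k}f_n=f_k$, so monotonicity gives $\upprev{}(h_k\vert s)\geq\upprev{}(f_k\vert s)$; passing to the limit yields $\upprev{}(f\vert s)\geq\lim_{k\to+\infty}\upprev{}(f_k\vert s)=\lim_{n\to+\infty}\upprev{}(f_n\vert s)$, the converse inequality. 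Since the argument invokes only monotonicity and Proposition~\ref{Prop: decreasing continuity measure-theoretic}, both valid for $\mupprev{\probtree{}}$ and $\upprev{}_{\smash{\overline{\mathrm{Q}}}}$, it settles both cases at once.

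The main obstacle, I expect, is pinning down the diagonal sequence $(h_k)_{k\in\nats{}}$: one must produce a single non-increasing sequence of finitary gambles that squeezes down to $f$ while still dominating each $f_k$, which is exactly why a minimum over both indices is needed rather than a naive diagonal $g_{k,k}$. Verifying monotonicity in $k$ together with the sandwich $f_k\leq h_k$ and $h_k\downarrow f$ are the delicate points; the rest is routine bookkeeping with monotonicity.
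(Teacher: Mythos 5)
Your proof is correct and follows essentially the same route as the paper: the paper's proof of this proposition cites Lemma~\ref{lemma: upper semicont continuity}, whose proof uses exactly your diagonal construction $h_m\coloneqq\min\{g_{n,m}\colon n\leq m\}$, the same sandwich $f_m\leq h_m$ with $h_m\downarrow f$, and then Proposition~\ref{Prop: decreasing continuity measure-theoretic} together with monotonicity. The only cosmetic difference is that the paper packages the diagonal argument as an abstract lemma about monotone operators, whereas you apply it directly to the two upper expectations.
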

\begin{proof}
This follows from \iftoggle{arxiv}{Lemma~\ref{lemma: upper semicont continuity} in Appendix~\ref{Sect: Appendix B}}{\cite[Lemma~17]{TJoens2021Equivalence}}, Proposition~\ref{Prop: decreasing continuity measure-theoretic} and the fact that $\mupprev{\probtree{}}$ and $\upprev{}_{\smash{\overline{\mathrm{Q}}}}$ are clearly both monotonous (see \iftoggle{arxiv}{Lemma~\ref{lemma: global properties} in Appendix~\ref{Sect: Appendix B}}{\cite[Lemma~18]{TJoens2021Equivalence}}).
\end{proof}
Note that, conversely, $\mupprev{\probtree{}}$ and $\upprev{}_{\smash{\overline{\mathrm{Q}}}}$ are also continuous with respect to non-decreasing sequences of l.s.c. variables that are bounded below, simply because, due to Proposition~\ref{prop: upward continuity measure-theoretic}, they satisfy continuity with respect to any non-decreasing (bounded below) sequence.

As a final step towards establishing our desired result, we will use what is called Choquet's capacitability theorem.
This theorem can be found in many different textbooks, but we will make use of the specific version of \citet{Dellacherie1972_ensembles_analytiques}.
We do this because Dellacherie's notion of a capacity can directly be applied to an extended real-valued functional---such as $\mupprev{\probtree{}}$ and $\upprev{}_{\smash{\overline{\mathrm{Q}}}}$---whereas most other sources restrict capacities to take the form of set-functions.
Let us start by introducing some key concepts and terminology regarding capacitability and analytic functions.

Let $\nnegextvariables$ be the set of all variables taking values in $\nnegextreals{}$ and $\nnegvariables$ the set of all (possibly unbounded) variables taking values in $\nnegreals{}$.
A functional $\mathrm{F}\colon\nnegextvariables\to\nnegextreals{}$ is called a \emph{$\samplespace{}$-capacity} if it satisfies the following three properties \cite[Section II.1.1]{Dellacherie1972_ensembles_analytiques}:\vspace*{-1pt}
\begin{enumerate}[leftmargin=*,ref={\upshape{CA\arabic*}},label={\upshape{}CA\arabic*}.,itemsep=-0pt, series=capacity]

\item\label{capacity: monotonicity}
$f\leq g \Rightarrow \mathrm{F}(f)\leq\mathrm{F}(g)$ for all $f,g\in\nnegextvariables$;

\item\label{capacity: upward continuity}
$\lim_{n\to+\infty} \mathrm{F}(f_n)=\mathrm{F}\left( \lim_{n\to+\infty} f_n \right)$ for any non-decreasing sequence $(f_n)_{n\in\nats}$ in \/ $\nnegextvariables$;

\item\label{capacity: downward continuity}
$\lim_{n\to+\infty} \mathrm{F}(f_n)=\mathrm{F}\left( \lim_{n\to+\infty} f_n \right)$ for any non-increasing sequence $(f_n)_{n\in\nats}$ of u.s.c. variables in\/~$\nnegvariables$.
\vspace*{-1pt}
\end{enumerate}
Recall from the beginning of this section that $\samplespace{}$ is compact and metrisable, which is in line with Dellacherie's assumption about the set `$\mathrm{E}$' in \cite[Section II.1.1]{Dellacherie1972_ensembles_analytiques}; see \cite[Introduction, Paragraph~2]{Dellacherie1972_ensembles_analytiques}.
Furthermore, observe that \ref{capacity: downward continuity} only applies to sequences in $\nnegvariables$ instead of sequences in $\nnegextvariables{}$; this too corresponds to the definition given in \cite[Section II.1.1]{Dellacherie1972_ensembles_analytiques} because Dellacherie always considers u.s.c. functions to be real-valued \cite[Introduction, Paragraph~2]{Dellacherie1972_ensembles_analytiques}.
In fact, one could restate \ref{capacity: downward continuity} so as to only apply to sequences that are uniformly bounded above; this follows immediately from the non-increasing character and the following lemma.\vspace*{-0pt}
\begin{lemma}\label{lemma: real upper semicont functions are bounded above}
Any u.s.c. variable $f\in\nnegvariables{}$ is bounded above.\vspace*{-0pt}
\end{lemma}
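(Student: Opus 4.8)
The plan is to exploit the compactness of $\samplespace{}$ together with the fact that any $f\in\nnegvariables{}$ is \emph{real}-valued---it takes values in $\nnegreals{}$ and hence never equals $+\infty$. The whole argument is a standard ``u.s.c.\ functions on compacta are bounded above'' compactness argument, tailored to our setting.

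First I would observe that, by upper semicontinuity, the set $A_a\coloneqq\{\omega\in\samplespace{}\colon f(\omega)<a\}$ is open for every $a\in\reals{}$; in particular each $A_n$ with $n\in\nats{}$ is open. Next I would argue that the family $(A_n)_{n\in\nats{}}$ is an open cover of $\samplespace{}$: for any $\omega\in\samplespace{}$, since $f(\omega)\in\nnegreals{}\subseteq\reals{}$ is a finite real number, there is some $n\in\nats{}$ with $f(\omega)<n$, so that $\omega\in A_n$. Since $\samplespace{}$ is compact---as recalled at the beginning of this section---this open cover admits a finite subcover $A_{n_1},\dots,A_{n_k}$. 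The sets $A_n$ are moreover non-decreasing in $n$---that is, $A_n\subseteq A_m$ whenever $n\leq m$---so taking $N\coloneqq\max\{n_1,\dots,n_k\}$ yields $\samplespace{}=A_N$. This means $f(\omega)<N$ for all $\omega\in\samplespace{}$, which is exactly the desired conclusion that $f$ is bounded above (by $N$).

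I do not expect any serious obstacle here; the only point that genuinely matters is the distinction between $\nnegvariables{}$ and $\nnegextvariables{}$. It is precisely because $f$ is finite-valued that the $A_n$ cover $\samplespace{}$: an extended-real u.s.c.\ variable attaining the value $+\infty$ somewhere would not be covered, and indeed need not be bounded above, so the hypothesis $f\in\nnegvariables{}$ (rather than $f\in\nnegextvariables{}$) is used in an essential way. The compactness of $\samplespace{}$, established earlier via metrizability and the product topology, is the other essential ingredient.
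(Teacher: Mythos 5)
Your proof is correct, but it takes a genuinely different---and more elementary---route than the paper's. You give the classical open-cover argument: the sublevel sets $\{\omega\in\samplespace{}\colon f(\omega)<n\}$, $n\in\nats{}$, are open by the very definition of upper semicontinuity, they cover $\samplespace{}$ precisely because $f$ is real-valued, and compactness plus nestedness yields a single $N$ with $f<N$ everywhere. The paper instead argues \emph{ex absurdo} using its own machinery: by Lemma~\ref{lemma: semicontinuous functions are monotone limits of n-measurables}, $f$ is the pointwise limit of a non-increasing sequence of $n$-measurable variables $f_n$; if $f$ were unbounded above, each $f_n\geq f$ would be too, and since an $n$-measurable variable takes only finitely many values ($\statespace{}$ being finite), each set $\{f_n=+\infty\}$ would be non-empty; these sets are closed (their complements are finite unions of cylinder events), nested, hence have the finite intersection property, so compactness gives a common point at which every $f_n$, and therefore $f$, equals $+\infty$---contradicting real-valuedness. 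Both proofs rest on the compactness of $\samplespace{}$ and on $f$ never attaining $+\infty$, and you correctly identify the latter as the essential use of $\nnegvariables{}$ rather than $\nnegextvariables{}$. Your argument is shorter, self-contained, and works verbatim on any compact topological space; the paper's version leans on the structure it has already built (finitary variables, cylinder events, the preceding lemma), at the cost of an indirect contradiction argument.
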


For any $\samplespace{}$-capacity $\mathrm{F}$, we say that a variable $f\in\nnegextvariables{}$ is \emph{$\mathrm{F}$-capacitable} if 
\vspace*{-1pt}
\begin{equation}\label{Eq: capacitability}
\mathrm{F}(f)=\sup\bigl\{\mathrm{F}(g)\colon g\in\nnegvariables{}\text{, $g$ is u.s.c.} \text{ and } f\geq g \bigr\}.\vspace*{-1pt}
\end{equation}
A variable $f\in\nnegextvariables{}$ is called \emph{universally capacitable} if it is $\mathrm{F}$-capacitable for all $\samplespace{}$-capacities $\mathrm{F}$.
Now, Choquet's capacitability theorem \cite[Theorem II.2.5]{Dellacherie1972_ensembles_analytiques} states that any \emph{analytic} variable is universally capacitable.
The definition of an analytic variable can be found in \cite{Dellacherie1972_ensembles_analytiques,kechris1995set_theory}; 
we do not explicitly give it here, because it is a rather abstract concept that, in practice, can often be replaced by the simpler and better-known notion of a Borel-measurable variable.
Indeed, according to \cite[Section~I.2.6]{Dellacherie1972_ensembles_analytiques}, each Borel-measurable variable in $\nnegextvariables{}$ is analytic.
Moreover, by \iftoggle{arxiv}{Corollary~\ref{Corollary: Borel sigma algebra} in Appendix~\ref{Sect: Appendix B}}{\cite[Corollary~16]{TJoens2021Equivalence}}, the Borel $\sigma$-algebra on $\samplespace{}$ coincides with the $\sigma$-algebra $\mathscr{F}$ generated by all cylinder events, so 
the notions of Borel-measurability and $\mathscr{F}$-measurability are equivalent. 
Combined with \cite[Theorem II.2.5]{Dellacherie1972_ensembles_analytiques}, this allows us to state the following weaker version of Choquet's capacitability theorem:
 
\begin{theorem}[Choquet's capacitability light]\label{Theorem: Choquet}
Any $\mathscr{F}$-measurable variable $f\in\nnegextvariables{}$ is universally capacitable.
\end{theorem}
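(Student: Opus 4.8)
The plan is to deduce the claim directly from the full version of Choquet's capacitability theorem \cite[Theorem II.2.5]{Dellacherie1972_ensembles_analytiques}, which we may invoke because, as recalled at the start of this section, $\samplespace{}$ is compact and metrizable and therefore fits Dellacherie's framework. That theorem guarantees that every \emph{analytic} variable is universally capacitable; since the $\mathscr{F}$-measurable variables in $\nnegextvariables{}$ form a subclass of the analytic ones, the result will follow as soon as this inclusion is made explicit.

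Concretely, I would fix an arbitrary $\mathscr{F}$-measurable $f\in\nnegextvariables{}$ and show that it is analytic in two moves. First, appealing to the equivalence of $\mathscr{F}$-measurability and Borel-measurability noted just above --- which itself rests on the identification of the Borel $\sigma$-algebra on $\samplespace{}$, equipped with the product topology, with the $\sigma$-algebra $\mathscr{F}$ generated by the cylinder events --- I would conclude that $f$ is Borel-measurable. Second, I would invoke \cite[Section~I.2.6]{Dellacherie1972_ensembles_analytiques}, by which every Borel-measurable variable in $\nnegextvariables{}$ is analytic. Feeding this analytic $f$ into \cite[Theorem II.2.5]{Dellacherie1972_ensembles_analytiques} then gives that $f$ is $\mathrm{F}$-capacitable for every $\samplespace{}$-capacity $\mathrm{F}$, that is, universally capacitable, which is exactly the assertion.

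The hard part here is not any analytic estimate --- all the substantive work has been delegated to Dellacherie's general theorem and to the earlier identification of the two $\sigma$-algebras --- but rather careful bookkeeping about which \emph{framework} is in play. One must confirm that the notion of analyticity and the capacitability theorem being applied are Dellacherie's versions phrased for extended-real-valued \emph{variables}, matched to the $\samplespace{}$-capacities defined through \ref{capacity: monotonicity}--\ref{capacity: downward continuity}, rather than the more familiar set-function formulations, and that the compactness and metrizability of $\samplespace{}$ place us genuinely within the hypotheses of \cite[Theorem II.2.5]{Dellacherie1972_ensembles_analytiques}.
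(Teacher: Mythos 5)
Your proposal is correct and follows essentially the same route as the paper: the theorem is obtained there, too, by combining the identification of $\mathscr{F}$ with the Borel $\sigma$-algebra on $\samplespace{}$ (so that $\mathscr{F}$-measurability equals Borel-measurability), the fact from \cite[Section~I.2.6]{Dellacherie1972_ensembles_analytiques} that Borel-measurable variables in $\nnegextvariables{}$ are analytic, and Dellacherie's capacitability theorem \cite[Theorem II.2.5]{Dellacherie1972_ensembles_analytiques} for analytic variables, with the compactness and metrizability of $\samplespace{}$ ensuring the framework applies. Your framing of the bookkeeping about Dellacherie's variable-based (rather than set-function) formulation matches the paper's own remarks as well.
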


As an almost immediate consequence of Proposition~\ref{prop: upward continuity measure-theoretic}, Proposition~\ref{Prop: decreasing continuity measure-theoretic} and Lemma~\ref{lemma: real upper semicont functions are bounded above}, it can be shown that, for any $s\in\situations{}$, the restrictions of both $\mupprev{\probtree{}}(\cdot\vert s)$ and $\upprev{}_{\smash{\overline{\mathrm{Q}}}}(\cdot\vert s)$ to $\nnegextvariables{}$ are $\samplespace{}$-capacities\iftoggle{arxiv}{; see Appendix~\ref{Sect: Appendix B}}{ \cite[Appendix A.2]{TJoens2021Equivalence}}.
Therefore, and because these upper expectations coincide on the u.s.c. variables in $\nnegvariables{}$---due to Corollary~\ref{corollary: equivalence for upper semicontinuous functions} and Lemma~\ref{lemma: real upper semicont functions are bounded above} above---the desired equality for $\mathscr{F}$-measurable variables in $\nnegextvariables{}$ follows from Equation~\eqref{Eq: capacitability} and Theorem~\ref{Theorem: Choquet}.
We can moreover replace $\nnegextvariables{}$ by $\bextvariables{}$, simply because $\mupprev{\probtree{}}$ and $\upprev{}_{\smash{\overline{\mathrm{Q}}}}$ are linear with respect to adding constants \iftoggle{arxiv}{(see Lemma~\ref{lemma: global properties} in Appendix~\ref{Sect: Appendix B})}{\cite[Lemma~18]{TJoens2021Equivalence}}.
This leads to our second main result.

\begin{theorem}\label{theorem: equivalence for measurable functions}
For any\/ $\probtree{}$ and\/ $\smash{\overline{\mathrm{Q}}}$ that agree, any $s\in\situations{}$ and any $\mathscr{F}$-measurable variable $f\in\bextvariables{}$ that is bounded below, we have that\/ 
$\mupprev{\probtree{}}(f\vert s) = \upprev{}_{\smash{\overline{\mathrm{Q}}}}(f\vert s)$.
\end{theorem}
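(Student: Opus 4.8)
The plan is to run the capacitability argument that the discussion preceding the theorem sets up: express the value of an $\mathscr{F}$-measurable variable as a supremum of values on u.s.c.\ variables via Choquet's theorem, and then use that the two upper expectations already agree on u.s.c.\ variables. The first move is a reduction to the non-negative case. Fix $s\in\situations{}$ and let $f$ be $\mathscr{F}$-measurable and bounded below, say $f\geq B$ for some $B\in\reals{}$. Then $h\coloneqq f-B$ lies in $\nnegextvariables{}$ and is again $\mathscr{F}$-measurable. Since both $\mupprev{\probtree{}}(\cdot\vert s)$ and $\upprev{}_{\smash{\overline{\mathrm{Q}}}}(\cdot\vert s)$ are linear with respect to adding constants (as noted just before the theorem), it suffices to prove the equality for $h$, because $f=h+B$ and hence $\mupprev{\probtree{}}(f\vert s)=\mupprev{\probtree{}}(h\vert s)+B$ and $\upprev{}_{\smash{\overline{\mathrm{Q}}}}(f\vert s)=\upprev{}_{\smash{\overline{\mathrm{Q}}}}(h\vert s)+B$.

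Next I would check that the restrictions $\mathrm{F}_1\coloneqq\mupprev{\probtree{}}(\cdot\vert s)$ and $\mathrm{F}_2\coloneqq\upprev{}_{\smash{\overline{\mathrm{Q}}}}(\cdot\vert s)$ to $\nnegextvariables{}$ are both $\samplespace{}$-capacities. Monotonicity~\ref{capacity: monotonicity} is immediate from the monotonicity of these upper expectations. Upward continuity~\ref{capacity: upward continuity} is exactly Proposition~\ref{prop: upward continuity measure-theoretic}, applied on $\nnegextvariables{}\subseteq\bextvariables{}$. Downward continuity~\ref{capacity: downward continuity}, which concerns non-increasing sequences of u.s.c.\ variables in $\nnegvariables{}$, is Proposition~\ref{prop: upper semicont continuity}: its hypothesis that the u.s.c.\ variables be bounded above is automatically fulfilled here by Lemma~\ref{lemma: real upper semicont functions are bounded above}. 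Thus $\mathrm{F}_1$ and $\mathrm{F}_2$ are $\samplespace{}$-capacities.

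I would then record that $\mathrm{F}_1$ and $\mathrm{F}_2$ agree on every u.s.c.\ variable $g\in\nnegvariables{}$: by Lemma~\ref{lemma: real upper semicont functions are bounded above} each such $g$ is bounded above (hence a gamble), so Corollary~\ref{corollary: equivalence for upper semicontinuous functions}---which uses that $\probtree{}$ and $\smash{\overline{\mathrm{Q}}}$ agree---gives $\mupprev{\probtree{}}(g\vert s)=\upprev{}_{\smash{\overline{\mathrm{Q}}}}(g\vert s)$. Now apply Choquet's capacitability light (Theorem~\ref{Theorem: Choquet}): the $\mathscr{F}$-measurable variable $h\in\nnegextvariables{}$ is universally capacitable, hence $\mathrm{F}_i$-capacitable for $i\in\{1,2\}$. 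Equation~\eqref{Eq: capacitability} therefore writes each $\mathrm{F}_i(h)$ as the supremum of $\mathrm{F}_i(g)$ over u.s.c.\ $g\in\nnegvariables{}$ with $g\leq h$. Since $\mathrm{F}_1$ and $\mathrm{F}_2$ take identical values on every such $g$, the two suprema coincide, giving $\mupprev{\probtree{}}(h\vert s)=\upprev{}_{\smash{\overline{\mathrm{Q}}}}(h\vert s)$; adding back $B$ yields the claim for $f$.

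The individual steps are light once the machinery is granted, so the substance of the proof lies entirely in the bookkeeping of verifying the three capacity axioms---the genuinely delicate input being the downward continuity established in Proposition~\ref{Prop: decreasing continuity measure-theoretic} and carried over to u.s.c.\ variables in Proposition~\ref{prop: upper semicont continuity}. The one point that requires care is aligning~\ref{capacity: downward continuity} with the results available to us: Dellacherie's definition restricts the downward-continuity axiom to u.s.c.\ variables in $\nnegvariables{}$, and it is precisely Lemma~\ref{lemma: real upper semicont functions are bounded above} that converts this hypothesis into ``bounded above'', matching what Proposition~\ref{prop: upper semicont continuity} supplies. With that identification made, the capacitability supremum and the coincidence on u.s.c.\ variables mesh and the equality follows.
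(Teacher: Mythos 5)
Your proposal is correct and follows essentially the same route as the paper's own proof: reduce to the non-negative case via constant additivity, verify that both restrictions to $\nnegextvariables{}$ are $\samplespace{}$-capacities (using Propositions~\ref{prop: upward continuity measure-theoretic} and~\ref{prop: upper semicont continuity} together with Lemma~\ref{lemma: real upper semicont functions are bounded above}), and then combine Theorem~\ref{Theorem: Choquet} with Corollary~\ref{corollary: equivalence for upper semicontinuous functions} to identify the two capacitability suprema. The paper packages the capacity verification as a separate proposition (Proposition~\ref{Prop: mupprev is capacity}), but the content and ordering of the argument are the same as yours.
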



\section{Relation with Shafer and Vovk's Work}\label{Sect: Relation with Shafer and Vovk}

Before we conclude this paper, it seems appropriate to say a few words about how our work here compares to that of Shafer and Vovk.
As readers that are familiar with their work may have noticed, the idea to use Choquet's capacitability theorem to extend the domain of the equality to $\mathscr{F}$-measurable (or analytic) variables already appears in \cite[Chapter~9]{Vovk2019finance}.
Another part that strongly builds on ideas from \cite[Chapter~9]{Vovk2019finance} is the proof of Proposition~\ref{Prop: decreasing continuity measure-theoretic};
some key steps there were inspired by the proof of \cite[Lemma~9.10]{Vovk2019finance}.
So it is fair to say that \cite[Chapter~9]{Vovk2019finance} served as an important inspiration for our work.
In fact, to the untrained eye, it might perhaps even seem as if our results do not differ much from those in \cite[Chapter~9]{Vovk2019finance}; but take a closer look.

First of all---and most importantly---the setting in which we define game-theoretic upper expectations differs considerably from theirs.
More specifically, they consider supermartingales under the prequential principle, which says that Forecaster's moves---the specification of the local models $\lupprev{s}$ (or $\extlupprev{s}$)---are not necessarily known beforehand for each situation $s\in\situations{}$, but instead are allowed to also depend on previous moves by Skeptic; see \cite[Theorem~7.5]{Vovk2019finance} for more details.
While this assumption allows them to remain more general---though, in many practical cases, it does not make much of a difference---the benefit that we gain from dropping it is remarkable;
it allows us to replace \cite[Lemma~9.10]{Vovk2019finance} and \cite[Theorem~9.7]{Vovk2019finance}, which require strong topological conditions on the parametrisation of the local models, with respectively Theorem~\ref{theorem: equivalence for non-decreasing limits of n-measurables} and Theorem~\ref{theorem: equivalence for measurable functions}, which are similar, but do not need any topological conditions at~all.

A second notable difference is that our results involve a larger domain;
Theorem~\ref{theorem: equivalence for non-decreasing limits of n-measurables}, or equivalently, Corollary~\ref{corollary: equivalence for upper semicontinuous functions}, applies to both u.s.c. variables that are bounded above and l.s.c. variables that are bounded below, whereas \cite[Lemma~9.10]{Vovk2019finance} only applies to bounded u.s.c. variables; 
Theorem~\ref{theorem: equivalence for measurable functions} applies to bounded below ($\mathscr{F}$-measurable) variables, whereas \cite[Theorem~9.7]{Vovk2019finance} only applies to bounded (analytic) variables.\footnote{
Recall that we could just as well have stated Theorem~\ref{theorem: equivalence for measurable functions} for analytic variables instead of $\mathscr{F}$-measurable variables.}
Our results also allow conditioning on situations;
theirs only apply to unconditional upper expectations.
The fact that this extension in domain is relevant in practice becomes clear when we also take a look at lower expectations.
Indeed, in (more) practical situations, we are usually not only interested in the upper expectation of a variable, but also, and simultaneously, in its lower expectation \cite{8627473,8535240}.
Our results can be easily extended to this two-sided setting, by combining the conjugacy relation between global upper and lower expectations with our two main results.

\begin{corollary}\label{corollary: equivalence}
Consider any\/ $\probtree{}$ and\/ $\smash{\overline{\mathrm{Q}}}$ that agree, any $s\in\situations{}$ and any $f\in\extvariables{}$ that is \emph{(a)} the pointwise limit of a monotone sequence of finitary gambles or \emph{(b)} an $\mathscr{F}$-measurable gamble.
Then we have that 
\begin{equation*}
\mupprev{\probtree{}}(f\vert s) = \upprev{}_{\smash{\overline{\mathrm{Q}}}}(f\vert s) \text{ and } \, \mlowprev{\probtree{}}(f\vert s) = \lowprev{}_{\,\smash{\overline{\mathrm{Q}}}}(f\vert s).
\end{equation*}
\end{corollary}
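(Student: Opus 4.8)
The plan is to derive both equalities from the upper-expectation equalities already established in Theorems~\ref{theorem: equivalence for non-decreasing limits of n-measurables} and~\ref{theorem: equivalence for measurable functions}, combined with the conjugacy relation $\lowprev{}(f\vert s)=-\upprev{}(-f\vert s)$ that, by construction, holds for each of the models $\mupprev{\probtree{}}$, $\axupprev{\smash{\overline{\mathrm{Q}}}}$ and $\upprevvovk{\smash{\overline{\mathrm{Q}}}}$. The upper-expectation half of the statement requires almost no extra work: in case~(a) it is exactly Theorem~\ref{theorem: equivalence for non-decreasing limits of n-measurables}, whereas in case~(b) an $\mathscr{F}$-measurable gamble is in particular an $\mathscr{F}$-measurable variable in $\bextvariables{}$ that is bounded below, so Theorem~\ref{theorem: equivalence for measurable functions} applies directly. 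Hence $\mupprev{\probtree{}}(f\vert s)=\upprev{}_{\smash{\overline{\mathrm{Q}}}}(f\vert s)$ in both cases.

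For the lower-expectation half, I would apply the corresponding upper-expectation equality to $-f$ and then invoke conjugacy. The crucial observation is that the class of variables considered in each case is closed under negation. In case~(a), if $f=\lim_{n\to+\infty}f_n$ for a non-decreasing (resp.\ non-increasing) sequence of finitary gambles $f_n$, then $-f=\lim_{n\to+\infty}(-f_n)$, where $(-f_n)_{n\in\nats{}}$ is a non-increasing (resp.\ non-decreasing) sequence of finitary gambles; so $-f$ is again a monotone limit of finitary gambles and Theorem~\ref{theorem: equivalence for non-decreasing limits of n-measurables} yields $\mupprev{\probtree{}}(-f\vert s)=\upprev{}_{\smash{\overline{\mathrm{Q}}}}(-f\vert s)$. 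In case~(b), $-f$ is again an $\mathscr{F}$-measurable gamble and hence bounded below, so Theorem~\ref{theorem: equivalence for measurable functions} gives the same equality for $-f$. Conjugacy then turns this into
\begin{equation*}
\mlowprev{\probtree{}}(f\vert s)=-\mupprev{\probtree{}}(-f\vert s)=-\upprev{}_{\smash{\overline{\mathrm{Q}}}}(-f\vert s)=\lowprev{}_{\,\smash{\overline{\mathrm{Q}}}}(f\vert s),
\end{equation*}
as desired.

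The only point that genuinely requires care---and the reason case~(b) is phrased for gambles rather than for the merely bounded-below variables of Theorem~\ref{theorem: equivalence for measurable functions}---is the symmetry needed to run the conjugacy argument on both sides. If $f$ were only bounded below, its negation $-f$ would be bounded above but not necessarily bounded below, so Theorem~\ref{theorem: equivalence for measurable functions} could not be applied to $-f$ and the lower-expectation equality would be out of reach. Restricting to $\mathscr{F}$-measurable gambles guarantees that both $f$ and $-f$ satisfy the hypotheses of Theorem~\ref{theorem: equivalence for measurable functions}, which is precisely what the two-sided conclusion demands. In case~(a) no such restriction is needed, since the property of being a monotone limit of finitary gambles is preserved under negation.
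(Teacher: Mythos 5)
Your proposal is correct and follows essentially the same route as the paper's own proof: the upper-expectation equalities come directly from Theorems~\ref{theorem: equivalence for non-decreasing limits of n-measurables} and~\ref{theorem: equivalence for measurable functions}, and the lower-expectation equalities follow by applying those theorems to $-f$ (noting that both classes (a) and (b) are closed under negation) and invoking conjugacy. Your closing remark on why case~(b) must be restricted to gambles---since negating a merely bounded-below variable destroys the bounded-below hypothesis---is a correct and useful observation that the paper leaves implicit.
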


\noindent
Note that many practically relevant inferences---e.g. hitting times \cite{8627473}---fall under category (a) but not under category (b), simply because they are not bounded. 
Yet, it is exactly this class of variables that is missing in Shafer and Vovk's main result \cite[Theorem~9.7]{Vovk2019finance}.

Finally, recall that our results relate $\mupprev{\probtree{}}$ to $\upprev{}_{\smash{\overline{\mathrm{Q}}}}$, where the latter represents, apart from the game-theoretic upper expectation $\upprevvovk{\smash{\overline{\mathrm{Q}}}}$, also the axiomatic upper expectation $\axupprev{\smash{\overline{\mathrm{Q}}}}$. 
Shafer and Vovk, on the other hand, only relate $\mupprev{\probtree{}}$ to the game-theoretic upper expectation $\upprevvovk{\smash{\overline{\mathrm{Q}}}}$.


\section{Conclusion}\label{Sect: Conclusion}

Our main results, Theorem~\ref{theorem: equivalence for non-decreasing limits of n-measurables} and Theorem~\ref{theorem: equivalence for measurable functions}, show that measure-theoretic, game-theoretic and axiomatic upper expectations are equal on a large domain of variables; it contains all variables that are the limit of a monotone sequence of finitary gambles, and all variables that are bounded below and $\mathscr{F}$-measurable.
It remains to be seen whether we can extend this equivalence even further, to all variables; 
so far, we have yet to find a counterexample showing that this is not possible.

We would also like to investigate the relation between our models and the Daniell-Stone type of (global) upper expectations described in \cite{Denk2018}.
Comparing Theorem~\ref{theorem: equivalence for measurable functions} and \cite[Theorem~3.10]{Denk2018}, and taking into account their use of Choquet's capacitability theorem, it seems that a close connection must exist, at least for bounded measurable variables. A more thorough study is required though before we can make accurate statements.


\acks{
The research of Natan T'Joens was supported and funded by the Special Research Fund (BOF) of Ghent University (reference number: 356).
The research of Jasper De Bock was partially funded by project number 3GO28919 of the FWO (Research Foundation - Flanders). We thank the reviewers for their thorough reading of our manuscript.
}

\bibliographystyle{plain}
\bibliography{ISIPTA2021}

\iftoggle{arxiv}{
\appendix
\section{Proofs}
\subsection{Proofs for the Results in Section~\ref{Sect: An Equality for Monotone Limits of Finitary Gambles}}\label{Sect: Appendix A}

Let $\mathbb{P}_{\statespace{}}$ be the set of all probability mass functions on $\statespace{}$ and let $d$ be the total variation distance \cite[Section~7.1]{DeBock:2015ck} defined, for any two mass functions $\pi_1,\pi_2\in\mathbb{P}_{\statespace{}}$, by
\begin{equation}\label{Eq: def distance between mass functions}
d(\pi_1,\pi_2) \coloneqq \max_{A\subseteq\statespace{}} \vert \pi_1(A)-\pi_2(A) \vert 
=  \frac{1}{2} \sum_{x\in\statespace{}} \vert \pi_1(x)-\pi_2(x) \vert,
\end{equation}
where we allowed ourselves a slight abuse of notation by writing $\pi_i(A)$ to mean $\sum_{x\in A} \pi_i(x)$ for $i\in\{1,2\}$.
Let $\mathbb{P}_\statespace{}$ be endowed with the topology induced by $d$, which is equivalent---see \cite[Appendix A]{DeBock:2015ck}---to the topology of pointwise convergence that we have implicitly adopted in the main text.
So $\mathbb{P}_{\statespace{}}$ is metrisable and, by \cite[Section~7]{DeBock:2015ck}, compact.
Also, note that any precise probability tree $p\colon s\in\situations{}\mapsto p(\cdot\vert s)\in\mathbb{P}_{\statespace{}}$ can be regarded as an element of the product space $\bigtimes_{s\in\situations{}} \mathbb{P}_{\statespace{}}$, and any imprecise probability tree $\probtree{}$ can be seen as a subset of $\bigtimes_{s\in\situations{}} \mathbb{P}_{\statespace{}}$.
Saying that a precise probability tree $p$ is compatible with an imprecise probability tree $\probtree{}$ is then the same as saying that $p\in\probtree{}$.
We will moreover endow the space $\bigtimes_{s\in\situations{}} \mathbb{P}_{\statespace{}}$ with the product topology.
It is clear that a sequence of precise probability trees $(p_i)_{i\in\nats{}}$ then converges if, for each situation $s\in\situations{}$, the mass functions $(p_i(\cdot\vert s))_{i\in\nats{}}$ converge pointwise, which is in accordance with our assumptions in the main text. 

\begin{proofof}[Lemma~\ref{lemma: convergent subsequence of probability trees}]
For any $s\in\situations{}$, since the credal set $\probtree_{s}$ is a closed and convex subset of the compact space $\mathbb{P}_{\statespace{}}$, it follows that $\probtree_{s}$ is compact.
Therefore, by Tychonoff's theorem \cite[Theorem 17.8]{willard2004general}, the tree $\probtree{}$ is compact too (as a subset of $\bigtimes_{s\in\situations{}} \mathbb{P}_{\statespace{}}$).
Moreover, note that, due to \cite[Theorem 22.3]{willard2004general} and the metrizability of $\setofprob{}_{\statespace{}}$ (and the fact that $\situations{}$ is countable), the space $\bigtimes_{s\in\situations{}} \mathbb{P}_{\statespace{}}$ is also metrisable.
So, by \cite[17G.3.]{willard2004general}, the compactness of $\probtree{}$ implies its sequential compactness.
Hence, by definition, each sequence in $\probtree{}$ has a convergent subsequence whose limit belongs to $\probtree{}$. 
\end{proofof}

\begin{proofof}[Lemma~\ref{lemma: convergence of probability measures implies convergence on n-measurables}]
First of all, observe that, for all $i\in\nats{}$, the expectations $\mprev{p_{}}(g \vert s)$ and $\mprev{p_i}(g \vert s)$ are indeed well-defined---the corresponding Lebesgue integrals exist---because $g$ is bounded and finitary (and therefore certainly $\mathscr{F}$-measurable).
In fact, because we can write $g = \sum_{z_{1:\ell}\in\statespace{}^\ell} g(z_{1:\ell}) \indica{z_{1:\ell}}$ for some $\ell\in\nats$, these expectations simply reduce---by definition of the Lebesgue integral; see \cite[Section 2.6.1]{shiryaev2016probabilityPartI}---to the finite weighted sums 
\begin{align}\label{Eq: lemma: convergence of probability measures implies convergence on n-measurables 1}
\vspace*{-2pt}
\mprev{p_{}}(g \vert s) &= \sum_{z_{1:\ell}\in\statespace{}^\ell} g(z_{1:\ell}) \prob_{p_{}}(z_{1:\ell} \vert s) \vspace*{-6pt}
\end{align}
and\vspace*{-4pt}
\begin{align}\label{Eq: lemma: convergence of probability measures implies convergence on n-measurables 2}
\mprev{p_i}(g \vert s) &= \sum_{z_{1:\ell}\in\statespace{}^\ell} g(z_{1:\ell}) \prob_{p_i}(z_{1:\ell} \vert s), \vspace*{-2pt}
\end{align}
where $\prob_{p_{}}(\cdot\vert s)$ and $\prob_{p_i}(\cdot\vert s)$ are defined according to Equation~\eqref{Eq: precise probability on algebra}.
Let $x_{1:k}\in\situations{}$ be such that $s=x_{1:k}$ and fix any $z_{1:\ell}\in\statespace{}^\ell$.
We will now show that $\prob_{p_i}(z_{1:\ell}\vert s)$ converges to $\prob_{p_{}}(z_{1:\ell} \vert s)$ as a function of $i\in\nats$.

If $k \geq \ell$ and $z_{1:\ell}=x_{1:\ell}$, then $\prob_{p_i}(z_{1:\ell}\vert s)=1$ for all $i\in\nats{}$ and also $\prob_{p_{}}(z_{1:\ell}\vert s)=1$, so $\prob_{p_i}(z_{1:\ell}\vert s)$ surely converges to $\prob_{p_{}}(z_{1:\ell} \vert s)$.
Similar observations lead us to conclude that this is also true for the cases where, either, $k \geq \ell$ and $z_{1:\ell}\not= x_{1:\ell}$, or, $k < \ell$ and $z_{1:k} \not= x_{1:k}$.
So it remains to check whether it is true for the case where $k < \ell$ and $z_{1:k} = x_{1:k}$.
In that case, $\prob_{p_i}(z_{1:\ell}\vert x_{1:k}) = \prod_{\,n=k}^{\ell-1} p_i( z_{n+1} \vert z_{1:n})$ converges to $\prob_{p_{}}(z_{1:\ell}\vert x_{1:k}) = \prod_{\,n=k}^{\ell-1} p\,( z_{n+1} \vert z_{1:n})$ if, for all $n \in\{k,\cdots,\ell-1\}$, $p_i( z_{n+1} \vert z_{1:n})$ converges to $p\,(z_{n+1} \vert z_{1:n})$.
The latter is implied by the convergence of $p_i$ to $p_{}$.
Indeed, since $\bigtimes_{s\in\situations{}}\mathbb{P}_{\statespace{}}$ is equipped with the product topology, the convergence of $p_i$ to $p_{}$ implies that, for any $n \in\{k,\cdots,\ell-1\}$, the mass function $p_i(\cdot\,\vert z_{1:n})$ converges to $p\,(\cdot\,\vert z_{1:n})$.
Since the set $\mathbb{P}_{\statespace{}}$ on its turn is equipped with the topology of pointwise convergence, this implies that
$p_i(z_{n+1} \vert z_{1:n})$ converges to $p\,(z_{n+1}\vert z_{1:n})$.

Now, to conclude the proof, note that the sums in Equations~\eqref{Eq: lemma: convergence of probability measures implies convergence on n-measurables 1} and~\eqref{Eq: lemma: convergence of probability measures implies convergence on n-measurables 2} are over a finite set $\statespace{}^\ell$---because $\statespace{}$ is finite---and the coefficients $g(z_{1:\ell})$ are real because $g$ is a gamble.
Since we have just shown that, for any $z_{1:\ell}\in\statespace{}^\ell$, the probability $\prob_{p_i}(z_{1:\ell}\vert s)$ converges to $\prob_{p_{}}(z_{1:\ell} \vert s)$, it is therefore clear that the expectation $\mprev{p_i}(g \vert s)$ converges to $\mprev{p_{}}(g \vert s)$.
\end{proofof}

\begin{proofof}[Theorem~\ref{theorem: equivalence for non-decreasing limits of n-measurables}]
Suppose that $f$ is the pointwise limit of a non-decreasing sequence of finitary gambles.
Then we have that $\mupprev{\probtree{}}(f\vert s) = \upprev{}_{\smash{\overline{\mathrm{Q}}}}(f\vert s)$ because, on the one hand, $\mupprev{\probtree{}}$ coincides with $\upprev{}_{\smash{\overline{\mathrm{Q}}}}$ for all finitary gambles \cite[Proposition~21]{TJOENS202130}, and on the other hand, due to Proposition~\ref{prop: upward continuity measure-theoretic}, both $\mupprev{\probtree{}}$ and $\upprev{}_{\smash{\overline{\mathrm{Q}}}}$ are continuous with respect to non-decreasing sequences of gambles.
Suppose now that $f$ is the pointwise limit of a non-increasing sequence $(f_n)_{n\in\nats}$ of finitary gambles.
Then similarly, the desired equality follows from \cite[Proposition~21]{TJOENS202130} and Proposition~\ref{Prop: decreasing continuity measure-theoretic}.
\end{proofof}

\subsection{Proofs for the Results in Section~\ref{Sect: equivalence for measurable variables}}\label{Sect: Appendix B}

Consider the distance function $\delta$ on $\samplespace{}$ defined by
\begin{equation}\label{Eq: distance between paths}
\delta(\omega,\omega')\coloneqq 2^{-n} \text{ with } n\coloneqq\inf\,\{k\in\nats{}\colon \omega_{k}\not=\omega'_k\}, 
\end{equation}
for all $\omega,\omega'\in\samplespace{}$.
Then it can easily be checked that $\delta$ is a metric on $\samplespace{}$.
Furthermore, as is shown by the lemma below, the topology on $\samplespace{}$ corresponding to this metric $\delta$ is the same as the topology that we have adopted throughout the main text---that is, the topology generated by the cylinder events $\{\Gamma(s)\colon s\in\situations{}\}$.
This confirms our claim that $\samplespace{}$ is metrisable.
Moreover, the lemma below also shows that this metric topology coincides with the product topology and therefore, by Tychonoff's theorem \cite[Theorem 17.8]{willard2004general} and the finiteness of $\statespace{}$ (and therefore the compactness of $\statespace{}$), that $\samplespace{}$ is compact.

\begin{lemma}\label{lemma: metric topology is product topology}
The set\/ $\{\Gamma(s)\colon s\in\situations{}\}$ of all cylinder events is a subbase for the metric topology on $\samplespace{}$ corresponding to~$\delta$.
The same holds for the product topology on $\samplespace{}$, and hence, the metric topology and product topology coincide.
Moreover, a set in this topology is open if and only if it is a countable union of cylinder events.
\end{lemma}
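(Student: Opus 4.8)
The plan is to prove the stronger statement that the cylinder events actually form a \emph{base} (not merely a subbase) for both topologies, from which the subbase claims, the coincidence of the two topologies, and the final characterisation of open sets all follow at once. The crucial observation, which I would establish first, is that the cylinder events are \emph{exactly} the open balls of $\delta$. Indeed, for $\omega,\omega'\in\samplespace{}$ and $k\in\natz{}$, the definition in~\eqref{Eq: distance between paths} gives $\delta(\omega,\omega')<2^{-k}$ if and only if $\omega$ and $\omega'$ agree on their first $k$ coordinates, i.e. $\omega'\in\Gamma(\omega^k)$ (using the convention $\inf\emptyset=+\infty$, so that $\delta(\omega,\omega')=0$ means $\omega=\omega'$). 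Hence the open ball $B(\omega,2^{-k})$ equals $\Gamma(\omega^k)$, and conversely every cylinder $\Gamma(x_{1:k})$ equals $B(\omega,2^{-k})$ for any $\omega\in\Gamma(x_{1:k})$. Since the balls of radii $2^{-k}$, $k\in\natz{}$, form a base for the metric topology, so do the cylinder events.

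For the product topology, I would use the standard subbase given by the preimages $\pi_k^{-1}(\{x\})=\{\omega\in\samplespace{}\colon\omega_k=x\}$ of the $k$-th coordinate projection $\pi_k$, for $k\in\nats{}$ and $x\in\statespace{}$ (recall $\statespace{}$ is discrete, being finite). Two inclusions then identify the product topology with the topology generated by the cylinders: on the one hand each cylinder $\Gamma(x_{1:k})=\bigcap_{i=1}^{k}\pi_i^{-1}(\{x_i\})$ is a finite intersection of subbasic sets, hence product-open; on the other hand each subbasic set $\pi_k^{-1}(\{x\})=\bigcup\{\Gamma(x_{1:k})\colon x_{1:k}\in\statespace{}^k,\,x_k=x\}$ is a finite union of cylinders, hence open in the cylinder topology. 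Since the intersection of two cylinders is either empty or again a cylinder (according to prefix compatibility), the cylinders are closed under finite intersection and therefore form a base for this common topology, and in particular a subbase for the product topology.

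Combining the two parts, the metric topology, the product topology, and the topology generated by the cylinder events all coincide. For the final claim, the ``$\Leftarrow$'' direction is immediate, since any countable union of cylinders is a union of open sets. For ``$\Rightarrow$'', I would invoke that the cylinders form a base: every open set $O$ satisfies $O=\bigcup\{\Gamma(s)\colon s\in\situations{},\ \Gamma(s)\subseteq O\}$, and since $\situations{}=\bigcup_{i\in\natz{}}\statespace{}^i$ is a countable union of finite sets, the index set of this union is countable, so $O$ is a countable union of cylinders.

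I would not expect a genuine obstacle here: the argument is routine once the identification of balls with cylinders is in place, which collapses the metric half of the statement almost immediately. The only points demanding a little care are the radius-to-cylinder-length bookkeeping in the metric (verifying that $\delta(\omega,\omega')<2^{-k}$ is equivalent to agreement on the first $k$ coordinates, and handling the degenerate case $k=0$ where $\Gamma(\square)=\samplespace{}$), and confirming that the cylinders are closed under finite intersection so that they genuinely form a base rather than only a subbase.
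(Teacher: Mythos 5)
Your proof is correct, and its first two steps coincide in substance with the paper's: the paper likewise shows that the open $\delta$-disks are exactly the cylinder events (with the same radius bookkeeping you describe), and likewise compares the cylinder topology with the product topology through the coordinate sets $\{\omega\in\samplespace{}\colon\omega_k=x\}$, each being a finite union of cylinders while each cylinder is a finite intersection of such sets. Where you genuinely diverge is in upgrading ``subbase'' to ``base'': since the cylinders cover $\samplespace{}$ (note $\Gamma(\square)=\samplespace{}$) and any pairwise intersection of cylinders is a cylinder or empty, every open set is the union of the cylinders it contains, and the countable-union characterisation then follows in one line from the countability of $\situations{}=\bigcup_{i\in\natz{}}\statespace{}^i$. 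The paper instead stays at the subbase level and proves the final claim by hand: it defines $\tau$ as the collection of countable unions of cylinder events and verifies directly that $\tau$ is a topology---closure under arbitrary unions via countability of $\situations{}$, closure under finite intersections via a distributivity argument plus the same prefix-compatibility fact you use---so that $\tau$ must be the topology generated by the cylinders. Your route buys a substantially shorter third step, since the base property does the work of the paper's distributivity argument; the cost is only the small extra check (which you correctly flag) that ``closed under finite intersection'' must be read as ``each finite intersection is a cylinder or empty'', the empty set being the empty union, and that the degenerate case $k=0$ behaves as expected. Both arguments rest on the same two elementary observations, so this is a streamlining rather than a new idea, but it is a real one.
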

\begin{proof}
Recall that the set of all open $\epsilon$-disks form a subbase for the metric topology; see e.g. \cite[Example 3.2(a)]{willard2004general}.
Consider any such open $\epsilon$-disk; that is, for any $\epsilon>0$ and any $\omega\in\samplespace{}$, consider the set $\{\omega'\in\samplespace{}\colon \delta(\omega,\omega') < \epsilon\}$.
If $\epsilon>1$, let $\ell\coloneqq 0$; otherwise, let $\ell\in\natz{}$ be the unique natural number such that $2^{-\ell-1} < \epsilon \leq 2^{-\ell}$.
Then, for all $\omega'\in\Gamma(\omega^\ell)$, since $\inf\{k\in\nats{}\colon \omega'_k\not=\omega_k\} \geq \ell+1$, we have by Equation~\eqref{Eq: distance between paths} that $\delta(\omega,\omega') \leq 2^{-\ell-1}<\epsilon$.
On the other hand, for any $\omega'\not\in\Gamma(\omega^\ell)$, we infer in a similar way that $\delta(\omega,\omega') \geq 2^{-\ell}\geq\epsilon$.
Hence, both facts taken together, we obtain that $\Gamma(\omega^\ell)=\{\omega'\in\samplespace{}\colon \delta(\omega,\omega') < \epsilon\}$ is the open $\epsilon$-disk around $\omega$.
Conversely, one can see that any cylinder event $\Gamma(x_{1:\ell})$ with $x_{1:\ell}\in\situations{}$, is an open $\epsilon$-disk around any $\omega\in\Gamma(x_{1:\ell})$ if $\epsilon>0$ is such that $2^{-\ell-1} < \epsilon \leq 2^{-\ell}$.
As a consequence, the family of open $\epsilon$-disks in $\samplespace{}$ is the same as the set $\{\Gamma(s)\colon s\in\situations{}\}$ of all cylinder events and therefore, since the former is a subbase of the metric topology, the set $\{\Gamma(s)\colon s\in\situations{}\}$ is a subbase of the metric topology.
This establishes the first statement.

Let us show that the same holds for the product topology on $\samplespace{}=\statespace{}^\nats{}$.
Since $\statespace{}$ has the discrete topology, the sets $U_{n,y}\coloneqq\{\omega\in\samplespace{}\colon \omega_n = y\}$ with $n\in\nats{}$ and $y\in\statespace{}$ form a subbase of this topology \cite[Definition 8.3]{willard2004general}.
Clearly, any such set $U_{n,y}$ is the union of the cylinder events $\Gamma(x_{1:n-1} y)$ with $x_{1:n-1}\in\statespace{}^{n-1}$, so the topology generated by the cylinder events $\{\Gamma(s)\colon s\in\situations{}\}$ is finer than (includes) the product topology.
On the other hand, any cylinder event $\Gamma(x_{1:n})$ with $x_{1:n}\in\situations{}$ is the finite intersection of the sets $U_{i,x_i}$ with $i\in\{1,\cdots,n\}$, so we also have that the product topology is finer than the one generated by $\{\Gamma(s)\colon s\in\situations{}\}$.
All together, we conclude that the topology generated by the cylinder events $\{\Gamma(s)\colon s\in\situations{}\}$ coincides with the product topology---and hence $\{\Gamma(s)\colon s\in\situations{}\}$ is a subbase---which establishes the second statement.

It remains to prove the last statement, which says that a set in this common topology is open if and only if it is a countable union of cylinder events.
In other words, we have to prove that $\tau \coloneqq \{\cup_{i\in\nats{}} \Gamma(s_i)\colon (\forall i\in\nats)\, s_i\in\situations{}\}$ is the topology generated by the subbase $\{\Gamma(s)\colon s\in\situations{}\}$.
That $\tau$ is closed under arbitrary unions follows from the fact that the set $\situations{}$ of all situations is countable.
Indeed, any union of elements of $\tau$ is a union of cylinder events, and since $\situations{}$---and therefore also $\{\Gamma(s)\colon s\in\situations{}\}$---is countable, this union can always be written as a countable union, therefore implying that it is an element of $\tau$.
Now, consider any finite intersection $\cap_{j\in\{1,\cdots,n\}} \cup_{i\in\nats{}} \Gamma(s_{i,j})$ of elements of $\tau$ and let us check that this too is an element of $\tau$.
Using distributivity, the finite intersection $\cap_{j\in\{1,\cdots,n\}} \cup_{i\in\nats{}} \Gamma(s_{i,j})$ can be rewritten as a countable union of finite intersections of cylinder events $\Gamma(s_{i,j})$.
So we can conclude that this countable union is an element of $\tau$ if we manage to show that any finite intersection of cylinder events is itself a cylinder event.
In order to do so, consider the intersection of any two cylinder events $\Gamma(x_{1:n})$ and $\Gamma(y_{1:m})$ with $x_{1:n}\in\situations{}$ and $y_{1:m}\in\situations{}$.
Note that this intersection is non-empty if and only if, either, $n\leq m$ and $x_{1:n}=y_{1:n}$, or, if $n>m$ and $x_{1:m}=y_{1:m}$.
In the first case, we have that $\Gamma(x_{1:n})\cap\Gamma(y_{1:m})=\Gamma(y_{1:m})$ and, in the second case, we have that $\Gamma(x_{1:n})\cap\Gamma(y_{1:m})=\Gamma(x_{1:n})$.
Hence, the intersection of any two cylinder events is itself a cylinder event and therefore, any finite intersection of cylinder events is also a cylinder event.
By our previous considerations, this implies that $\tau$ is indeed closed under finite intersections.
Together with the fact that $\tau$ is closed under arbitrary unions---and trivially includes $\samplespace{}$ and the empty set $\emptyset$---we may conclude that $\tau$ is a topology on $\samplespace{}$.
Since $\{\Gamma(s)\colon s\in\situations{}\}$ is clearly a subbase of this topology $\tau$, this finalises the proof.
\end{proof}
The last statement in the lemma above immediately implies the following corollary, in which the Borel sets are the open sets with respect to the common topology from Lemma~\ref{lemma: metric topology is product topology}.

\begin{corollary}\label{Corollary: Borel sigma algebra}
The Borel $\sigma$-algebra on $\samplespace{}$ coincides with the $\sigma$-algebra $\mathscr{F}$ generated by the cylinder events.
\end{corollary}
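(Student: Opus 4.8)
The plan is to establish the equality by proving the two inclusions $\mathscr{F}\subseteq\mathscr{B}$ and $\mathscr{B}\subseteq\mathscr{F}$ separately, where $\mathscr{B}$ denotes the Borel $\sigma$-algebra on $\samplespace{}$—that is, the $\sigma$-algebra generated by the open sets of the common topology from Lemma~\ref{lemma: metric topology is product topology}. Both inclusions will follow directly from that lemma, and in particular from its final claim that a subset of $\samplespace{}$ is open if and only if it is a countable union of cylinder events.

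For the inclusion $\mathscr{F}\subseteq\mathscr{B}$, I would first note that every cylinder event $\Gamma(s)$ with $s\in\situations{}$ is open, being (trivially) a one-term countable union of cylinder events. Hence every cylinder event is a Borel set, so $\mathscr{B}$ is a $\sigma$-algebra containing all cylinder events. Since $\mathscr{F}$ is by definition the smallest $\sigma$-algebra containing these events, it follows that $\mathscr{F}\subseteq\mathscr{B}$. For the converse, I would invoke the final statement of the lemma to write any open set as a countable union of cylinder events; as $\mathscr{F}$ is a $\sigma$-algebra containing all cylinder events, it is closed under countable unions and therefore contains every open set. Thus $\mathscr{F}$ is a $\sigma$-algebra containing the open sets, and since $\mathscr{B}$ is the smallest such $\sigma$-algebra, we obtain $\mathscr{B}\subseteq\mathscr{F}$. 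Combining both inclusions yields $\mathscr{F}=\mathscr{B}$.

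There is essentially no obstacle here, since all the work has been front-loaded into Lemma~\ref{lemma: metric topology is product topology}: the corollary is genuinely immediate. The only subtlety worth flagging is that the inclusion $\mathscr{B}\subseteq\mathscr{F}$ relies on open sets being \emph{countable} unions of cylinder events rather than arbitrary ones, since a $\sigma$-algebra is only closed under countable operations; this is precisely why the lemma was phrased in terms of countable unions, a feature that ultimately rests on the countability of $\situations{}$.
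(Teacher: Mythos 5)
Your proof is correct and follows essentially the same route as the paper's: both inclusions are derived from Lemma~\ref{lemma: metric topology is product topology}, using that cylinder events are open for $\mathscr{F}\subseteq\mathscr{B}$ and that open sets are countable unions of cylinder events for $\mathscr{B}\subseteq\mathscr{F}$. Your remark on the role of countability (and hence of the countability of $\situations{}$) is exactly the point the paper's lemma is designed to secure.
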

\begin{proof}
By Lemma~\ref{lemma: metric topology is product topology}, any open set in $\samplespace{}$ is the countable union of cylinder events.
As a result, all open sets are included in the $\sigma$-algebra $\mathscr{F}$ and therefore, $\mathscr{F}$ includes the Borel $\sigma$-algebra.
On the other hand, it is clear that $\mathscr{F}$ is not larger than the Borel $\sigma$-algebra because each cylinder event is itself open (because it is a---trivial---union of cylinder events).
\end{proof}

\begin{proofof}[Lemma~\ref{lemma: semicontinuous functions are monotone limits of n-measurables}]
Since $-f\in\extvariables{}$ is l.s.c. if and only if $f$ is u.s.c., it clearly suffices to prove the statement for u.s.c. variables.
We start by proving the two direct implications.
Let $f\in\extvariables$ be u.s.c. and let $(f_n)_{n\in\nats{}}$ be defined by
\begin{equation*}
f_n(\omega) \coloneqq \sup(\{-n\}\cup\{f(\omega') \colon \omega'\in\Gamma(\omega^n)\}),
\end{equation*}
for all $\omega\in\samplespace{}$ and all $n\in\nats{}$.
Then $(f_n)_{n\in\nats{}}$ is clearly a non-increasing sequence of variables that are finitary and bounded below (since $f_n \geq -n$).
If $f$ is bounded above, then each $f_n$ is clearly also bounded above, so in that case $(f_n)_{n\in\nats{}}$ is a sequence of gambles.  
So it only remains to show that $\lim_{n\to+\infty}f_n(\omega) = f(\omega)$ for any $\omega\in\samplespace{}$. 
That $\lim_{n\to+\infty}f_n(\omega) \geq f(\omega)$ holds, follows from the fact that, due to the definition of the variables $f_n$, $f_n(\omega)\geq f(\omega)$ for all $n\in\nats{}$.
Hence, if $f(\omega)=+\infty$, we automatically have that $\lim_{n\to+\infty}f_n(\omega) = f(\omega)$, so we may assume that $f(\omega)<+\infty$.
Fix any real $a>f(\omega)$.
Since $f$ is u.s.c., the set $\{\omega'\in\samplespace{}\colon f(\omega') < a\}$ is an open neighboorhood of $\omega$.
According to Lemma~\ref{lemma: metric topology is product topology}, any open set in $\samplespace{}$ is a countable union of cylinder events.
Since $\omega$ belongs to $\{\omega'\in\samplespace{}\colon f(\omega') < a\}$, one of these cylinder events contains $\omega$.
This implies that there is some $n\in\nats{}$ such that $f(\omega') < a$ for all $\omega'\in \Gamma(\omega^n)$.
Then, for any $k\geq n$, since $\Gamma(\omega^k)\subseteq\Gamma(\omega^n)$, we obviously also have that $f(\omega') < a$ for all $\omega'\in\Gamma(\omega^k)$.
Hence, $f_k(\omega) \leq a$ for all $k \geq \max\{\abs{a},n\}$, which implies that $\lim_{k\to+\infty}f_k(\omega) \leq a$.
This holds for any real $a>f(\omega)$, so we obtain that $\lim_{k\to+\infty}f_k(\omega) \leq f(\omega)$ as desired.

To prove the two converse implications, consider any $f\in\extvariables$ that is the pointwise limit of a non-increasing sequence $(f_n)_{n\in\nats{}}$ of finitary bounded below variables.
We show that, for any $a\in\reals{}$, the set $A\coloneqq\{\omega\in\samplespace{}\colon f(\omega) < a\}$ is open, and therefore that $f$ is a u.s.c. variable.
It is then clear that $f$ is moreover bounded above if $(f_n)_{n\in\nats{}}$ is a sequence of gambles, because in that case $f \leq f_1 \leq \sup f_1 \in\reals{}$. 
So fix any $a\in\reals{}$ and note that the sequence $(A_n)_{n\in\nats{}}$ of events defined by $A_n\coloneqq\{\omega\in\samplespace{}\colon f_n(\omega) < a\}$ for all $n\in\nats{}$, is non-decreasing and converges to $A$ because $(f_n)_{n\in\nats{}}$ converges non-increasingly to $f$.
So we have that $A = \cup_{n\in\nats{}} A_n$.
Moreover, for any $n\in\nats{}$, because $f_n$ is finitary, there is a $k\in\nats$ such that $f_n$ only depends on the first $k$ states, and so the set $A_n$ is a finite union of cylinder events of the form $\Gamma(x_{1:k})$ with $x_{1:k}\in\statespace{}^k$.
So, by Lemma~\ref{lemma: metric topology is product topology}, each set $A_n$ is open.
Since any union of open sets is open again, we obtain that $A = \cup_{n\in\nats{}} A_n$ is open, therefore concluding the proof.
\end{proofof}

\begin{lemma}\label{lemma: upper semicont continuity}
Any operator\/ $\mathrm{F}\colon\extvariables{}\to\extreals{}$ that is monotone and that is continuous with respect to non-increasing (or non-decreasing) sequences of finitary gambles, is also continuous with respect to non-increasing (resp. non-decreasing) sequences $(f_n)_{n\in\nats{}}$ of u.s.c. (resp. l.s.c.) variables that are bounded above (resp. bounded below); i.e. 
\begin{equation*}
\lim_{n\to+\infty}\mathrm{F}(f_n) = \mathrm{F}(f) \text{, with } f = \inf_{n\in\nats{}} f_n = \lim_{n\to+\infty} f_n.
\end{equation*}
\end{lemma}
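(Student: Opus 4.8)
The plan is to establish the non-increasing / u.s.c.\ / bounded-above case; the non-decreasing / l.s.c.\ / bounded-below case then follows by an entirely symmetric argument. So I would fix a non-increasing sequence $(f_n)_{n\in\nats{}}$ of u.s.c.\ variables that are bounded above, set $f=\inf_{n\in\nats{}} f_n=\lim_{n\to+\infty} f_n$, and aim to sandwich $\mathrm{F}(f)$ between $\lim_n \mathrm{F}(f_n)$ from both sides. Note that $f$ is again u.s.c.\ (an infimum of u.s.c.\ variables) and bounded above (since $f\leq f_1$), though I will not need to invoke the lemma for $f$ itself.

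The first step is to reduce to finitary gambles via Lemma~\ref{lemma: semicontinuous functions are monotone limits of n-measurables}. Since each $f_n$ is u.s.c.\ and bounded above, that lemma supplies, for each $n$, a non-increasing sequence $(g_{n,k})_{k\in\nats{}}$ of finitary gambles with $\inf_k g_{n,k}=f_n$ pointwise; as each such sequence is non-increasing, every term dominates its infimum, so $g_{n,k}\geq f_n$ for all $k$. The key step is then a diagonalisation: I would define $h_k\coloneqq\min\{g_{1,k},\dots,g_{k,k}\}$, which is a finitary gamble as a minimum of finitely many finitary gambles. I would verify three facts. First, $(h_k)_{k\in\nats{}}$ is non-increasing, because passing from $h_k$ to $h_{k+1}$ both lowers every retained term (each $g_{n,k}$ decreases in $k$) and adjoins one more term to the minimum. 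Second, $\inf_k h_k=f$: the bound $h_k\geq\min\{f_1,\dots,f_k\}=f_k\geq f$ yields ``$\geq$'', while for each fixed $m$ we have $h_k\leq g_{m,k}$ for all $k\geq m$, so $\inf_k h_k\leq\inf_{k\geq m} g_{m,k}=f_m$ for every $m$, yielding ``$\leq$''. Third, the sandwich $f\leq f_n\leq h_n$ holds, the right inequality because $h_n\geq\min\{f_1,\dots,f_n\}=f_n$.

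With these facts in hand the conclusion is immediate. Applying the assumed continuity of $\mathrm{F}$ along the non-increasing sequence of finitary gambles $(h_k)_{k\in\nats{}}$ gives $\lim_{k\to+\infty}\mathrm{F}(h_k)=\mathrm{F}(\inf_k h_k)=\mathrm{F}(f)$. Monotonicity of $\mathrm{F}$ applied to $f\leq f_n\leq h_n$ gives $\mathrm{F}(f)\leq\mathrm{F}(f_n)\leq\mathrm{F}(h_n)$ for every $n$, and since $(\mathrm{F}(f_n))_{n\in\nats{}}$ is non-increasing (monotonicity along the non-increasing $(f_n)_{n\in\nats{}}$) its limit exists in $\extreals{}$. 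Letting $n\to+\infty$ then squeezes $\lim_n\mathrm{F}(f_n)$ between $\mathrm{F}(f)$ and $\lim_n\mathrm{F}(h_n)=\mathrm{F}(f)$, which is exactly the claim.

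The main obstacle is the diagonalisation, and specifically the verification that $\inf_k h_k$ equals $f$ rather than merely bounding it: one needs simultaneously that $h_k$ stays above $f_k$ (so the infimum is not pulled too low) and that $h_k$ eventually falls below each fixed $g_{m,\cdot}$ (so the infimum descends all the way to $f_m$ for every $m$, hence to $\inf_m f_m=f$). Everything else reduces to monotonicity together with the elementary fact that monotone images of monotone sequences converge in $\extreals{}$.
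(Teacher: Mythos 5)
Your proposal is correct and follows essentially the same route as the paper's proof: both reduce to finitary gambles via Lemma~\ref{lemma: semicontinuous functions are monotone limits of n-measurables}, form the same diagonal minima $h_k=\min\{g_{n,k}\colon n\leq k\}$, verify $\inf_k h_k=f$ together with the sandwich $f\leq f_n\leq h_n$, and conclude by monotonicity and the assumed continuity of $\mathrm{F}$. The only cosmetic differences are your direct infimum argument for $\inf_k h_k\leq f$ (the paper argues pointwise with $a>f(\omega)$) and your symmetric treatment of the l.s.c.\ case, where the paper instead passes to the conjugate operator $\mathrm{F}'(f)\coloneqq-\mathrm{F}(-f)$.
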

\begin{proof}
Consider any non-increasing sequence $(f_n)_{n\in\nats{}}$ of u.s.c. variables that are bounded above.
Then it follows from Lemma~\ref{lemma: semicontinuous functions are monotone limits of n-measurables} that, for all $n\in\nats{}$, there is a non-increasing sequence $(g_{n,m})_{m\in\nats}$ of finitary gambles such that $\lim_{m\to+\infty} g_{n,m} = f_n$.
Now let $(h_m)_{m\in\nats{}}$ be the sequence of variables defined by
\begin{equation*}
h_m(\omega)\coloneqq\min\{g_{n,m}(\omega) \colon 0\leq n \leq m\} \text{ for all } \omega\in\samplespace{}.
\end{equation*} 
Because each $(g_{n,m})_{m\in\nats}$ is non-increasing, $(h_m)_{m\in\nats{}}$ is also non-increasing.
The variables $h_m$ for all $m\in\nats{}$ are clearly bounded---and hence, they are gambles---and they are also finitary because, on the one hand, $g_{n,m}$ is finitary for all $n\in\nats{}$, and on the other hand, the minimum over a finite number of finitary variables is trivially also finitary.
So $(h_m)_{m\in\nats{}}$ is a non-increasing sequence of finitary gambles.
Furthermore, note that $h_m \geq f$ because $g_{n,m} \geq f_n \geq f$ for all $n,m\in\nats{}$, and therefore $\lim_{m\to+\infty} h_m \geq f$.
To see that $\lim_{m\to+\infty} h_m \leq f$, fix any $\omega\in\samplespace{}$ and any $a\in\reals{}$ such that $a>f(\omega)$.
Since $\lim_{n\to+\infty} f_n = f$, there is some $n'\in\nats{}$ such that $a>f_{n'}(\omega)$ and since also $\lim_{m\to+\infty} g_{n',m} = f_{n'}$, there is some $m'\geq n'$ such that $a>g_{n',m'}(\omega)$.
Then certainly $a > h_{m'}(\omega)$, and since $(h_m)_{m\in\nats{}}$ is non-increasing, we have that $a > \lim_{m\to+\infty}h_m(\omega)$.
This holds for any $a\in\reals{}$ such that $a>f(\omega)$, so we have that $\lim_{m\to+\infty} h_m(\omega) \leq f(\omega)$, which in turn implies that $\lim_{m\to+\infty} h_m \leq f$ because $\omega\in\samplespace{}$ was chosen arbitrarily.
So we have that $\lim_{m\to+\infty} h_m = \inf_{m\in\nats{}} h_m = f$.
Now, recalling that $(h_m)_{m\in\nats{}}$ is moreover a non-increasing sequence of finitary gambles, it follows from the assumptions about $\mathrm{F}$ that $\lim_{m\to+\infty}\mathrm{F}(h_m) = \mathrm{F}(f)$.
Furthermore, note that, due to the non-increasing character of $(g_{n,m})_{m\in\nats{}}$ and $(f_n)_{n\in\nats{}}$,
\begin{align*}
h_m(\omega) &= \min\{g_{n,m}(\omega) \colon 0\leq n \leq m\} \\
&\geq \min\{f_{n}(\omega) \colon 0\leq n \leq m\} = f_m(\omega),
\end{align*}
for all $m\in\nats{}$ and all $\omega\in\samplespace{}$.
So, $f_m \leq h_m$ for all $m\in\nats{}$, which by the monotonicity of $\mathrm{F}$ implies that 
\begin{equation*}
\lim_{m\to+\infty}\mathrm{F}(f_m)
\leq \lim_{m\to+\infty}\mathrm{F}(h_m) = \mathrm{F}(f).
\end{equation*}
The converse inequality---that $\lim_{m\to+\infty}\mathrm{F}(f_m)\geq\mathrm{F}(f)$---follows from the non-increasing character of $(f_n)_{n\in\nats{}}$ and the monotonicity of $\mathrm{F}$.

Finally, that the complementary statement holds for any $\mathrm{F}\colon\extvariables{}\to\extreals{}$ that is (monotone and) continuous with respect to non-decreasing sequences of finitary gambles, can easily be deduced from what we have just proved above, and the fact that $f\in\extvariables{}$ is l.s.c. if and only if $-f$ is an u.s.c. variable.
Indeed, the operator $\mathrm{F}'$ defined by $\mathrm{F}'(f)\coloneqq-\mathrm{F}(-f)$ for all $f\in\extvariables{}$ satisfies monotonicity and continuity with respect to non-increasing sequences of finitary gambles, so it follows that $\mathrm{F}'$ is also continuous with respect to non-increasing sequences of u.s.c. variables that are bounded above.
As a result, $\mathrm{F}$ is continuous with respect to non-decreasing sequences of l.s.c. variables that are bounded below.
\end{proof}


\begin{proofof}[Lemma~\ref{lemma: real upper semicont functions are bounded above}]
Recall from Lemma~\ref{lemma: semicontinuous functions are monotone limits of n-measurables} that $f$ is the pointwise limit of a non-increasing sequence $(f_n)_{n\in\nats{}}$ of finitary (bounded below) variables.
Assume \emph{ex absurdo} that $f$ is not bounded above.
Then, for each $n\in\nats{}$, since $f_n \geq \inf_{m\in\nats{}} f_m = f$, it follows that $f_n$ is also not bounded above.
Since each $f_n$ can only take a finite number of different values---because it is finitary and $\statespace{}$ is finite---we must have that $f_n(\omega) = +\infty$ for at least one $\omega\in\samplespace{}$.
So, for each $n\in\nats{}$, the set $A_n\coloneqq\{\omega\in\samplespace{}\colon f_n(\omega) = +\infty\}$ is non-empty.
Moreover, since $(f_n)_{n\in\nats{}}$ is non-increasing, $(A_n)_{n\in\nats{}}$ is also non-increasing and therefore, $\cap_{i=1}^n A_i = A_n \not=\emptyset$ for all $n\in\nats{}$.
So $(A_n)_{n\in\nats{}}$ has the finite intersection property.
Then, since $\samplespace{}$ is compact, it follows from \cite[Theorem~17.4]{willard2004general} that the sets $(A_n)_{n\in\nats{}}$ have a non-empty intersection if each of the $A_n$ is closed.
We proceed to show that the sets $A_n$ are closed.
Note that because each $f_n$ is finitary, the set $A_n$ is a finite union of cylinder events.
In particular, there is some $k\in\nats$ and some $S_n\subseteq\statespace{}^k$ such that $A_n = \cup_{x_{1:k}\in S_n} \Gamma(x_{1:k})$.
Since $\cup_{x_{1:k}\in \statespace{}^k} \Gamma(x_{1:k}) = \samplespace{}$, this implies that $A_n^c = \cup_{x_{1:k}\in \statespace{}^k\setminus S_n} \Gamma(x_{1:k})$ is a finite union of cylinder events and therefore, by Lemma~\ref{lemma: metric topology is product topology}, it is open.
So $A_n$ is closed
and we can therefore apply \cite[Theorem~17.4]{willard2004general} to find that $\cap_{n\in\nats} A_n \not= \emptyset$.
Then, for any $\omega\in\cap_{n\in\nats} A_n$, since $\omega\in A_n$ for all $n\in\nats{}$, it follows from the definition of the sets $A_n$ that $f_n(\omega)=+\infty$ for all $n\in\nats{}$.
As a consequence, $f(\omega)=\lim_{n\to+\infty}f_n(\omega) = +\infty$, which is in contradiction with the fact that $f$ is real-valued.
\end{proofof}

\begin{lemma}\label{lemma: global properties}
Consider any $\probtree{}$, any $\smash{\overline{\mathrm{Q}}}$ and any $s\in\situations{}$.
Then, for all $f,g\in\extvariables{}$ and $\mu\in\reals{}$, we have that
\begin{enumerate}[leftmargin=*,ref={\upshape{}E\arabic*},label={\upshape{}E\arabic*}.,itemsep=3pt]
	\item \label{global monotonicity}
$f\leq g \Rightarrow \mupprev{\probtree{}}(f \vert s) \leq \mupprev{\probtree{}}(g \vert s)$ \hfill \textnormal{[monotonicity];}
\item \label{global constant additivity}
$\mupprev{\probtree{}}(f + \mu \,\vert s) \leq \mupprev{\probtree{}}(f \vert s) + \mu$ \hfill \textnormal{[constant additivity],}
\end{enumerate}
and similarly for\/ $\upprev{}_{\smash{\overline{\mathrm{Q}}}}$.
\end{lemma}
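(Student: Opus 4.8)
The plan is to treat $\mupprev{\probtree{}}$ and $\upprev{}_{\smash{\overline{\mathrm{Q}}}}$ separately, reducing both properties in each case to elementary manipulations of the infima and suprema that define the operators.

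For $\mupprev{\probtree{}}$ I would first establish both claims at the level of a single precise tree $p\sim\probtree{}$, working with the upper integral $\mupprev{p}(\cdot\vert s)=\inf\{\mprev{p}(h\vert s)\colon h\in\overline{\mathbb{V}}_{\sigma,\mathrm{b}},\,h\geq f\}$. Monotonicity is immediate: if $f\leq g$, then every dominating $h\in\overline{\mathbb{V}}_{\sigma,\mathrm{b}}$ of $g$ also dominates $f$, so the infimum defining $\mupprev{p}(g\vert s)$ is taken over a subfamily of the one for $\mupprev{p}(f\vert s)$, whence $\mupprev{p}(f\vert s)\leq\mupprev{p}(g\vert s)$. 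For constant additivity, note that $h\mapsto h+\mu$ maps $\overline{\mathbb{V}}_{\sigma,\mathrm{b}}$ into itself and sends $\{h\geq f\}$ into $\{h\geq f+\mu\}$, while $\mprev{p}(h+\mu\vert s)=\mprev{p}(h\vert s)+\mu$ by linearity of the Lebesgue integral with respect to adding a finite constant; taking the infimum over this subfamily yields $\mupprev{p}(f+\mu\vert s)\leq\mupprev{p}(f\vert s)+\mu$. Both inequalities survive the upper envelope $\sup_{p\sim\probtree{}}$, since a supremum preserves a pointwise inequality and commutes with the addition of a constant, giving \ref{global monotonicity} and \ref{global constant additivity} for $\mupprev{\probtree{}}$.

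For $\upprev{}_{\smash{\overline{\mathrm{Q}}}}$, monotonicity \ref{global monotonicity} is nothing but Axiom~\ref{P monotonicity}, which $\axupprev{\smash{\overline{\mathrm{Q}}}}=\upprev{}_{\smash{\overline{\mathrm{Q}}}}$ satisfies by definition. Constant additivity is the more delicate part, and I would prove it by descending through the game-theoretic construction, using that $\upprev{}_{\smash{\overline{\mathrm{Q}}}}=\upprevvovk{\smash{\overline{\mathrm{Q}}}}$ (Theorem~\ref{theorem: vovk and axiomatic are equal}). On global gambles $f\in\gambles{}$, I would observe that adding $\mu$ to any bounded below supermartingale $\martingale{}$ again yields a bounded below supermartingale---this uses only local constant additivity $\lupprev{s}(\cdot+\mu)=\lupprev{s}(\cdot)+\mu$, which follows from coherence \ref{local coherence: upper bound}--\ref{local coherence: nneg homogeneity}---while shifting both its starting value and its $\liminf$ by $\mu$; since $\martingale{}+\mu$ hedges $f+\mu$ whenever $\martingale{}$ hedges $f$, the defining infimum in Equation~\eqref{Eq: definition real-valued game-theoretic upper expectation} gives $\upprevvovk{\smash{\overline{\mathrm{Q}}}}(f+\mu\vert s)\leq\upprevvovk{\smash{\overline{\mathrm{Q}}}}(f\vert s)+\mu$. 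To lift this to arbitrary $f\in\extvariables{}$ I would exploit the identities $(f+\mu)^{\wedge c}=f^{\wedge(c-\mu)}+\mu$ and $(f+\mu)^{\vee c}=f^{\vee(c-\mu)}+\mu$: applying the gamble case to $f^{\wedge(c-\mu)}$ and passing to the limit $c\to+\infty$ via \ref{global upper cuts} extends the inequality to bounded below variables, and a second limit $c\to-\infty$ via \ref{global lower cuts} extends it to all of $\extvariables{}$, the constant shift of the cut level leaving both limits unchanged.

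The main obstacle is precisely this two-stage cut extension for $\upprev{}_{\smash{\overline{\mathrm{Q}}}}$: one must verify that the truncations interact with the constant shift exactly as above, that the relevant truncations remain gambles (resp. bounded below variables) so that the previously established case applies, and that the limits in \ref{global upper cuts} and \ref{global lower cuts} are unaffected by replacing the cut level $c$ by $c-\mu$. A minor but genuine point of care throughout is the arithmetic with $\pm\infty$---in particular that $f+\mu$ is well defined for extended-real $f$ and finite $\mu$, and that the identities $\sup_{p}(\mupprev{p}(f\vert s)+\mu)=\sup_{p}\mupprev{p}(f\vert s)+\mu$ and $\lim_{c}(\,\cdot+\mu)=\lim_{c}(\cdot)+\mu$ continue to hold when the quantities involved are infinite.
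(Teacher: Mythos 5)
Your proof is correct, and its measure-theoretic half follows exactly the same skeleton as the paper's: establish \ref{global monotonicity} and \ref{global constant additivity} for each single $\mupprev{p}$ with $p\sim\probtree{}$, then observe that both survive the upper envelope. The only difference there is that you derive the single-tree properties directly from the defining infimum in Equation~\eqref{Eq: upper integral} (a subfamily argument for monotonicity, translation invariance of the Lebesgue integral for the constant shift), where the paper simply cites Properties~M1 and~M2 of $\prev{}_{p}$ from \cite{TJOENS202130}. The genuine divergence is in the constant additivity of $\upprev{}_{\smash{\overline{\mathrm{Q}}}}$: the paper disposes of it in one line by citing \cite[Proposition~7~(V4)]{TJOENS202130} together with Theorems~\ref{theorem: Vovk is the largest} and~\ref{theorem: vovk and axiomatic are equal}, whereas you reconstruct it from scratch inside the game-theoretic definition---shifting bounded below supermartingales by $\mu$ (legitimate because local coherence \ref{local coherence: upper bound}--\ref{local coherence: nneg homogeneity} gives $\lupprev{s}(g+\mu)\leq\lupprev{s}(g)+\mu$, so $\martingale{}+\mu$ remains a bounded below supermartingale and hedges $f+\mu$) to settle the case of gambles, and then propagating the inequality through the cut-continuity properties \ref{global upper cuts} and \ref{global lower cuts} via the identities $(f+\mu)^{\wedge c}=f^{\wedge(c-\mu)}+\mu$ and $(f+\mu)^{\vee c}=f^{\vee(c-\mu)}+\mu$, noting that a finite shift of the cut level leaves the limits unchanged. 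Both routes are sound; the paper's is shorter and leans on machinery already developed in \cite{TJOENS202130}, while yours is self-contained within the present paper, makes visible exactly which structural features underlie the property (local constant additivity, supermartingale shifts, cut limits), and conveniently only ever needs the inequality direction, which is all that \ref{global constant additivity} asserts.
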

\begin{proof}
To prove both properties for $\mupprev{\probtree{}}$, consider any compatible $p\sim\probtree{}$.
That Properties~\ref{global monotonicity} and~\ref{global constant additivity} hold for $\mupprev{p}$ follows from the fact that they are satisfied by the expectation $\smash{\prev{}_{p}}$ (if it exists; see \cite[Properties~M1 and~M2]{TJOENS202130}) together with Equation~\eqref{Eq: upper integral}.
Since $\mupprev{\probtree{}}$ is then simply the upper envelope of all $\smash{\mupprev{p}}$ with $\smash{p\sim\probtree{}}$, it follows that both properties are also satisfied by $\smash{\mupprev{\probtree{}}}$.
Furthermore, that $\upprev{}_{\smash{\overline{\mathrm{Q}}}}$ satisfies monotonicity is immediate from \ref{P monotonicity} and Theorems~\ref{theorem: Vovk is the largest} and~\ref{theorem: vovk and axiomatic are equal}.
That it is also constant additive follows from \cite[Proposition~7 (V4)]{TJOENS202130} and, again, Theorems~\ref{theorem: Vovk is the largest} and~\ref{theorem: vovk and axiomatic are equal}.
\end{proof}

\begin{proposition}\label{Prop: mupprev is capacity}
For any $s\in\situations{}$, the restrictions of \/ $\mupprev{\probtree{}}(\cdot \vert s)$ and \/ $\upprev{}_{\smash{\overline{\mathrm{Q}}}}(\cdot \vert s)$ to $\nnegextvariables$ are \/ $\samplespace{}$-capacities.
\end{proposition}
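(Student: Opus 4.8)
The plan is to verify the three defining properties \ref{capacity: monotonicity}--\ref{capacity: downward continuity} of a $\samplespace{}$-capacity directly, writing $\mathrm{F}$ for the restriction of either $\mupprev{\probtree{}}(\cdot\vert s)$ or $\upprev{}_{\smash{\overline{\mathrm{Q}}}}(\cdot\vert s)$ to $\nnegextvariables{}$. As a preliminary step, I would confirm that $\mathrm{F}$ actually takes values in $\nnegextreals{}$, as a capacity must: every $f\in\nnegextvariables{}$ satisfies $f\geq 0$, so monotonicity (Property~\ref{global monotonicity} of Lemma~\ref{lemma: global properties}) gives $\mathrm{F}(f)\geq\mathrm{F}(0)$, and since the constant $0$ is a finitary gamble on which both models coincide \cite[Proposition~21]{TJOENS202130} and evaluate to $0$ (the Lebesgue integral of $0$ is $0$, so $\mupprev{p}(0\vert s)=0$ for every compatible $p$), we obtain $\mathrm{F}(f)\geq 0$.

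With the codomain settled, \ref{capacity: monotonicity} is nothing more than Property~\ref{global monotonicity} of Lemma~\ref{lemma: global properties} read off on $\nnegextvariables{}$. For \ref{capacity: upward continuity}, I would note that any non-decreasing sequence $(f_n)_{n\in\nats{}}$ in $\nnegextvariables{}$ is in particular a non-decreasing sequence in $\bextvariables{}$, each term being bounded below by $0$; the required identity $\lim_{n\to+\infty}\mathrm{F}(f_n)=\mathrm{F}(\lim_{n\to+\infty}f_n)$ is then exactly the content of Proposition~\ref{prop: upward continuity measure-theoretic}.

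For \ref{capacity: downward continuity}, the sequence $(f_n)_{n\in\nats{}}$ is now a non-increasing sequence of u.s.c. variables lying in the real-valued class $\nnegvariables{}$. Here Lemma~\ref{lemma: real upper semicont functions are bounded above} is the crucial ingredient: it guarantees that each such $f_n$ is bounded above, and being non-negative it is in fact a bounded-above u.s.c. variable. Hence Proposition~\ref{prop: upper semicont continuity} (itself built on Proposition~\ref{Prop: decreasing continuity measure-theoretic}) applies and yields $\lim_{n\to+\infty}\mathrm{F}(f_n)=\mathrm{F}(\lim_{n\to+\infty}f_n)$.

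The argument carries no genuine obstacle; the only point that demands care---and the reason the axioms are phrased as they are---is the matching of hypotheses. Axiom~\ref{capacity: downward continuity} is deliberately restricted to u.s.c. sequences in $\nnegvariables{}$ rather than in $\nnegextvariables{}$ precisely so that Lemma~\ref{lemma: real upper semicont functions are bounded above} can supply the ``bounded above'' condition that Proposition~\ref{prop: upper semicont continuity} requires; without this restriction an u.s.c. variable could take the value $+\infty$ somewhere and the downward-continuity result would no longer be available. Once this bookkeeping is in place, all three properties hold for both $\mupprev{\probtree{}}(\cdot\vert s)$ and $\upprev{}_{\smash{\overline{\mathrm{Q}}}}(\cdot\vert s)$, so both restrictions are $\samplespace{}$-capacities.
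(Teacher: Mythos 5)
Your proposal is correct and follows essentially the same route as the paper's own proof: \ref{capacity: monotonicity} from Lemma~\ref{lemma: global properties}~(\ref{global monotonicity}), \ref{capacity: upward continuity} from Proposition~\ref{prop: upward continuity measure-theoretic} together with $\nnegextvariables{}\subseteq\bextvariables{}$, and \ref{capacity: downward continuity} from Proposition~\ref{prop: upper semicont continuity} combined with Lemma~\ref{lemma: real upper semicont functions are bounded above}. Your preliminary verification that the restrictions actually take values in $\nnegextreals{}$ (via monotonicity and the value $0$ on the zero gamble) is a small piece of bookkeeping the paper leaves implicit, but it is correct and does not change the argument.
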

\begin{proof}
Property~\ref{capacity: monotonicity} follows for both $\mupprev{\probtree{}}(\cdot\vert s)$ and $\upprev{}_{\smash{\overline{\mathrm{Q}}}}(\cdot\vert s)$ from Lemma~\ref{lemma: global properties} (\ref{global monotonicity}) above.
That $\mupprev{\probtree{}}(\cdot\vert s)$ and $\upprev{}_{\smash{\overline{\mathrm{Q}}}}(\cdot\vert s)$ satisfy Property~\ref{capacity: upward continuity} follows from Proposition~\ref{prop: upward continuity measure-theoretic} and the fact that $\nnegextvariables{}\subseteq\bextvariables{}$.
Finally, that they satisfy Property~\ref{capacity: downward continuity} follows from Proposition~\ref{prop: upper semicont continuity}, together with the fact that, as a consequence of Lemma~\ref{lemma: real upper semicont functions are bounded above}, u.s.c. variables in $\nnegvariables{}$ are always bounded above.
\end{proof}

\begin{proofof}[Theorem~\ref{theorem: equivalence for measurable functions}]
Let $f\in\bextvariables{}$ be bounded below and $\mathscr{F}$-measurable.
Since $f$ is bounded below, and both $\mupprev{\probtree{}}$ and $\upprev{}_{\smash{\overline{\mathrm{Q}}}}$ are constant additive (see Lemma~\ref{lemma: global properties} (\ref{global constant additivity})), we may assume without loss of generality that $f$ is non-negative---and therefore, that $f\in\nnegextvariables{}$. 
Then, according to Theorem~\ref{Theorem: Choquet}, the variable $f$ is universally capacitable.
Since $\mupprev{\probtree{}}(\cdot\vert s)$ and $\upprev{}_{\smash{\overline{\mathrm{Q}}}}(\cdot\vert s)$ are both $\samplespace{}$-capacities by Proposition~\ref{Prop: mupprev is capacity}, this implies that 
\begin{align*}
\mupprev{\probtree{}}(f \vert s)&=\sup\Bigl\{\mupprev{\probtree{}}(g \vert s)\colon g\in\nnegvariables{}\text{, $g$ is u.s.c.} \text{ and } f\geq g \Bigr\}
\end{align*}
and
\begin{align*}
\upprev{}_{\smash{\overline{\mathrm{Q}}}}(f \vert s)&=\sup\Bigl\{\upprev{}_{\smash{\overline{\mathrm{Q}}}}(g \vert s)\colon g\in\nnegvariables{}\text{, $g$ is u.s.c.} \text{ and } f\geq g \Bigr\}.
\end{align*}
Now recall Corollary~\ref{corollary: equivalence for upper semicontinuous functions}, which says that $\mupprev{\probtree{}}(h \vert s)=\upprev{}_{\smash{\overline{\mathrm{Q}}}}(h \vert s)$ for all u.s.c. variables $h\in\extvariables{}$ that are bounded above.
Since all u.s.c. variables $g\in\nnegvariables{}$ are automatically bounded above due to Lemma~\ref{lemma: real upper semicont functions are bounded above}, we obtain that $\mupprev{\probtree{}}(f \vert s)=\upprev{}_{\smash{\overline{\mathrm{Q}}}}(f \vert s)$.
\end{proofof}

\begin{proofof}[Corollary~\ref{corollary: equivalence}]
The statement for the upper expectations follows immediately from Theorem~\ref{theorem: equivalence for non-decreasing limits of n-measurables} and Theorem~\ref{theorem: equivalence for measurable functions}.
To prove the statement for the lower expectations, we distinguish two cases.
If $f$ is the pointwise limit of a monotone sequence of finitary gambles, then the same holds for $-f$, and hence, by Theorem~\ref{theorem: equivalence for non-decreasing limits of n-measurables}, $\mupprev{\probtree{}}(-f \vert s) = \upprev{}_{\smash{\overline{\mathrm{Q}}}}(-f \vert s)$.
This implies by conjugacy that $-\mlowprev{\probtree{}}(f \vert s) = -\lowprev{}_{\smash{\overline{\mathrm{Q}}}}(f \vert s)$ and therefore that $\mlowprev{\probtree{}}(f \vert s) = \lowprev{}_{\smash{\overline{\mathrm{Q}}}}(f \vert s)$.
On the other hand, if $f$ is an $\mathscr{F}$-measurable gamble, then so is $-f$, and therefore, by Theorem~\ref{theorem: equivalence for measurable functions}, we have that $\mupprev{\probtree{}}(-f \vert s) = \upprev{}_{\smash{\overline{\mathrm{Q}}}}(-f \vert s)$.
Conjugacy then again implies that $\mlowprev{\probtree{}}(f \vert s) = \lowprev{}_{\smash{\overline{\mathrm{Q}}}}(f \vert s)$.
\end{proofof}
}{} 
\end{document}